\newtheorem{defn}{Definition}
\newtheorem{thm}{Theorem}
\newtheorem{prop}{Proposition}
\newtheorem{lem}{Lemma}
\newtheorem{cor}{Corollary}
\newtheorem*{cor*}{Corollary}
\newtheorem*{corA}{Corollary \ref{cor-RST-planar-case}}
\theoremstyle{remark}
\newtheorem{rem}{Remark}
\newcommand{\QQ}{\mathbb{Q}}
\newcommand{\PP}{\mathbb{P}}
\newcommand{\OO}{\mathcal{O}}
\newcommand{\catname}[1]{{\bm{#1}}}
\newcommand{\fname}[1]{{\normalfont\textbf{#1}}}
\newcommand{\Spec}[1]{{\operatorname{Spec}(#1)}}
\newcommand{\Hom}{{\operatorname{Hom}}}
\newcommand{\Ext}{{\operatorname{Ext}}}
\newcommand{\Hilb}{{\operatorname{Hilb}}}
\newcommand{\Quot}{{\operatorname{Quot}}}
\newcommand{\Pic}{{\operatorname{Pic}}}
\newcommand{\mult}{{\normalfont\textrm{mult}}}
\newcommand{\ord}{{\normalfont\textrm{ord}}}
\newcommand{\Q}{{\operatorname{Q}}}
\newcommand{\PS}{{[\![ t ]\!]}}
\title{A Cancellation Theorem for Segre Classes}
\author{Daniel Lowengrub}
\begin{document}
\maketitle

\section{Introduction}
In \cite[4]{Fulton}, Fulton defines the notion of the Segre class $s(X,Y)\in A_*X$ of a closed embedding of schemes $X\rightarrow Y$ over a field $k$. As in Fulton, all of our schemes are finite type over a ground field $k$ which may be of arbitrary characteristic unless stated otherwise.
 
The Segre class allows us to measure the way in which $X$ sits inside $Y$, and is functorial for sufficiently nice maps (\cite[4.2]{Fulton}).
One important case is the embedding of a closed point, for which the Segre class gives us its multiplicity.

Suppose we have an embedding $X\rightarrow Y$ and the schemes in question embed into a simpler space $Z$ such that $Y\rightarrow Z$ is a regular embedding. 
For example, $Y$ could be an intersection of hyper-surfaces in $Z=\PP^n_k$. 
In this setup, it is natural to ask whether we can calculate $s(X,Y)$, assuming that we understand $s(X,Z)$ and $c(N_YZ)$ where $N_YZ$ is the normal bundle of the regular embedding $Y\rightarrow Z$. 
In other words, can we deduce intersection theoretic invariants of the possibly complicated embedding $X\rightarrow Y$, from the hopefully simpler embeddings into $Z$?

Fulton provides the answer to this question in one very special case:
\begin{prop}(\cite[4.2.7]{Fulton})\label{prop-easy-case}
Let $X$ be a closed subscheme of $Y$, $E$ a vector bundle on $Y$, with $Y$ embedded into $E$ as the zero section. Then,
\[
s(X,Y) = c(E|_X)\cap s(X,Z)
\]
\end{prop}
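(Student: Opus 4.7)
The plan is to reduce the identity to a computation of the Segre class of the normal cone $C_XE$, using the bundle structure $E\to Y$ to decompose $C_XE$, and then applying the definition of the Segre class of a closed subscheme in terms of the projective completion of its normal cone.

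The first step is to establish the isomorphism of cones over $X$
\[
C_XE \;\cong\; E|_X \times_X C_XY.
\]
Since $Y\hookrightarrow E$ is the zero section of a vector bundle, the ideal $I_Y\subset\OO_E$ is locally generated by a regular sequence of fiber coordinates, and consequently $\bigoplus_n I_Y^n/I_Y^{n+1}\cong\mathrm{Sym}(E^\vee|_Y)$. Writing $I_X = I_Y + I_{X,Y}\cdot\OO_E$, where $I_{X,Y}$ is the ideal of $X$ in $Y$, and using the $\OO_Y$-flatness of $\mathrm{Sym}(E^\vee|_Y)$, a direct graded computation yields
\[
\bigoplus_n I_X^n/I_X^{n+1}\;\cong\;\mathrm{Sym}(E^\vee|_X)\otimes_{\OO_X}\bigoplus_n I_{X,Y}^n/I_{X,Y}^{n+1},
\]
and taking $\mathrm{Spec}_X$ gives the claimed product of cones.

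The second step is to establish the general formula
\[
s(C\times_XV)\;=\;c(V)^{-1}\cap s(C)
\]
valid for any cone $C$ on $X$ and any vector bundle $V$ over $X$. Applied to $C = C_XY$ and $V = E|_X$, this gives $s(X,E) = c(E|_X)^{-1}\cap s(X,Y)$, which is equivalent to the stated formula after capping with $c(E|_X)$. To prove it, one considers the projective completion $P := \PP((C\times_XV)\oplus 1)$ together with its tautological $\OO_P(1)$, and exploits the vector bundle structure $C\times_XV\to C$ to push down Chern classes of $\OO_P(1)$ fiberwise over $C$ via the standard identity $s(V) = c(V)^{-1}\cap[X]$, then push further down to $X$ by the projection formula, producing the factor $c(V)^{-1}$.

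The main obstacle will be Step 2: the cone $C_XY$ is typically singular and $C_XY\to X$ is not flat, so the fiberwise computation must be set up carefully, and the total grading on $C\times_XV$ (the sum of the gradings on $C$ and on $V$) has to be matched to the tautological $\OO_P(1)$. A useful reduction is to induct on $\mathrm{rank}(E)$ by filtering $E$ by subbundles with line-bundle quotients, which reduces the problem to the line-bundle case; when $E = L$ is a line bundle, $\PP(C_XL\oplus 1)\to\PP(C_XY\oplus 1)$ is a $\PP^1$-bundle and the Chern-class bookkeeping becomes concretely tractable.
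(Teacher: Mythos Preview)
The paper does not give its own proof of this proposition; it is quoted verbatim from \cite[4.2.7]{Fulton} and used as a black box (later, to deduce Proposition~\ref{prop-smooth-case} from Proposition~\ref{prop-smooth-case-stronger}). So there is no in-paper argument to compare against.

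That said, your proposal is essentially Fulton's own proof. In \cite{Fulton} the decomposition of Step~1 is obtained not by a direct graded-ring computation but as an instance of the general exact sequence of cones attached to a smooth morphism (\cite[Ex.~4.2.6]{Fulton}, which the present paper also invokes in the proof of Proposition~\ref{prop-smooth-case-stronger}): the projection $p:E\to Y$ is smooth, so one has $0\to (T_p)|_X\to C_XE\to C_XY\to 0$, and since the zero section identifies $T_p|_Y$ with $E$, the left term is $E|_X$. Your hands-on computation of $\bigoplus I_X^n/I_X^{n+1}$ is just the explicit local verification of that exact sequence in this special case. For Step~2, Fulton's \cite[Ex.~4.1.6]{Fulton} proves $s(C)=c(V)^{-1}\cap s(C')$ for any exact sequence of cones $0\to V\to C\to C'\to 0$ with $V$ a vector bundle, via the splitting principle and a pushforward from the projective completion; your proposed induction on $\operatorname{rank}V$ is exactly that reduction. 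The worry you flag about $C_XY\to X$ not being flat is not an obstruction: all the relevant classes are defined by pushing forward powers of $c_1(\OO(1))$ from the projective completion, and no flat pullback along $C_XY\to X$ is ever needed.
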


It it natural to wonder whether this is true in general, when we replace $E$ by the normal bundle $N_YZ$. 
Unfortunately, this is false in extremely simple cases. See \cite[4.2.8]{Fulton} for an example.

Despite this failure in the general case, it is intuitive that the statement should hold when the embedding of $Y$ into $Z$ ``looks like'' a zero section of a vector bundle. 
For instance, it seems plausible that we could replace the condition of $Y\rightarrow Z$ being a zero section, with the condition that $Y\rightarrow Z$ have some sort of tubular neighborhood.

The first generalization that we'll prove is the following.

\begin{prop}\label{prop-smooth-case}
Let $X$ be a finite type $k$-scheme let $Y$ and $Z$ be smooth $k$-schemes. 
Suppose that we have a closed embedding $X\rightarrow Y$ and regular embedding $Y\xrightarrow{f} Z$.
Then,
\[
s(X,Y) = c(N_YZ|_X)\cap s(X,Z)
\]
\end{prop}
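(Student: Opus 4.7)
The plan is to reduce the statement to Proposition \ref{prop-easy-case} via deformation to the normal cone. Since $f\colon Y\to Z$ is a regular embedding, the normal bundle $E := N_YZ$ is a genuine vector bundle on $Y$, and $Y$ embeds into $E$ as the zero section. Because $X \subset Y$, applying Proposition \ref{prop-easy-case} to the zero-section embedding $Y \hookrightarrow E$ yields
\[
s(X,Y) \,=\, c(N_YZ|_X) \cap s(X, N_YZ).
\]
It therefore suffices to prove the cancellation $s(X,Z) = s(X, N_YZ)$ in $A_*X$.

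To establish this cancellation I would invoke the deformation to the normal cone $M^{\circ} = M^{\circ}_Y Z \to \Af^1$ associated to the regular embedding $Y \hookrightarrow Z$. The total space $M^{\circ}$ is flat over $\Af^1$, with general fiber $Z$ and special fiber $N_YZ$, and $Y\times\Af^1 \hookrightarrow M^{\circ}$ specializes the inclusion $Y \subset Z$ at $t\neq 0$ to the zero section $Y \subset N_YZ$ at $t=0$. Because $X \subset Y$ is held fixed, the inclusion $X\times\Af^1 \hookrightarrow M^{\circ}$ recovers on its fibers the two embeddings of $X$ we wish to compare. Applying the standard specialization principle for Segre classes in a flat family over a smooth curve, both $s(X,Z)$ and $s(X, N_YZ)$ should arise as pullbacks of a single class on $X\times\Af^1$ along fiber inclusions, giving the desired equality.

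The smoothness of $Y$ and $Z$ enters here in ensuring that $M^{\circ}$ itself is smooth and that the family $M^{\circ}\to\Af^1$ behaves well enough for the specialization argument to run; I expect the technical core of the argument to be making this specialization rigorous --- essentially, verifying that the normal cones $C_X Z$ and $C_X(N_YZ)$ arise as compatible fibers of $C_{X\times\Af^1} M^{\circ}$ over $\Af^1$, so that their associated Segre classes in $A_*X$ coincide. Once the cancellation $s(X,Z)=s(X,N_YZ)$ is in hand, the proposition follows by combining it with the reduction from Proposition \ref{prop-easy-case} in the first paragraph.
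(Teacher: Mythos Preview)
Your reduction in the first paragraph is exactly the paper's strategy: both you and the paper compare the given embedding $Y\hookrightarrow Z$ to the zero section $Y\hookrightarrow N_YZ$ and invoke Proposition~\ref{prop-easy-case} for the latter. The divergence is in how the comparison $s(X,Z)=s(X,N_YZ)$ is established.

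The paper does not use deformation to the normal cone. Instead it proves a slightly stronger independence statement (Proposition~\ref{prop-smooth-case-stronger}): for any regular embedding $Y\hookrightarrow Z$ with $Z$ smooth, the class $c(N_YZ|_X)\cap s(X,Z)$ depends only on $X\to Y$. The argument is the product trick familiar from \cite[4.2.6]{Fulton}: given two smooth targets $Z_1,Z_2$, pass to $Z_1\times Z_2$ so that one dominates the other via a smooth morphism, then use the exact sequences of cones for smooth morphisms to compare Segre and Chern classes directly. Applying this with $Z_1=Z$ and $Z_2=N_YZ$ (both smooth since $Y$ is smooth) gives the result. This is short, uses only \cite[4.1.6, 4.2.6]{Fulton}, and yields the independence statement as a byproduct.

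Your deformation approach, by contrast, hinges on the claim that $C_XZ$ and $C_X(N_YZ)$ arise as the fibers of $C_{X\times\Af^1}M^\circ$ over $1$ and $0$. Over $t\neq 0$ this is clear since $M^\circ$ is a product there, but at $t=0$ it amounts to knowing that formation of the normal cone of $X\times\Af^1$ in $M^\circ$ commutes with restriction to the special fiber --- essentially, that $C_{X\times\Af^1}M^\circ\to\Af^1$ is flat. This is not automatic for an arbitrary closed subscheme $X$, and there is no off-the-shelf ``specialization principle for Segre classes'' in Fulton that hands it to you. One can likely push the argument through using the smoothness of $M^\circ\to\Af^1$, but the work involved is at least comparable to, and less transparent than, the paper's product argument. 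So the gap you flag is real and is exactly where the content lies; the paper simply routes around it.
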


Our primary result of this paper is the following strengthening of proposition \ref{prop-smooth-case} in the case of characteristic zero.

\begin{thm}\label{thm-main-theorem}
Let $k$ be a field of characteristic zero, and let $X$, $Y$ and $Z$ be varieties over $k$. 
Suppose that we have a closed embedding $X\rightarrow Y$ and regular embedding $Y\xrightarrow{f} Z$ that formally locally looks like a section $Y\xrightarrow{s}P$ of some smooth map $P\rightarrow Y$. 
By this we mean that for any closed point $y=\Spec{k(y)}\rightarrow Y$, there exists a smooth map $P\rightarrow Y$, a section $Y\xrightarrow{s}P$, and an isomorphism $\varphi$
\[
\xymatrix @ R=0.5pc {
  &  \hat{Z} \ar[dd]^{\varphi}_{\cong} \\
\hat{Y} \ar[ru]^{\hat{f}} \ar[rd]^{\hat{s}}& \\
  & \hat{P}
}
\]

Where $\hat{Y}$, $\hat{Z}$ and $\hat{P}$ are the formal completions of $Y$, $Z$ and $P$ along the points $y$, $f(y)$ and $s(y)$ respectively.

Then,
\[
s(X,Y) = c(N_YZ|_X)\cap s(X,Z)
\]
\end{thm}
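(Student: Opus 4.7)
My plan is to reduce the Segre class identity to a structural statement about the normal cone $C_XZ$. Specifically, I would aim to show that under the formal local hypothesis, $C_XZ$ is isomorphic to the Whitney sum $C_XY\oplus N_YZ|_X$ of cones over $X$. Granted this, the identity $s(X,Y) = c(N_YZ|_X)\cap s(X,Z)$ follows from the standard formula $s(C\oplus E) = c(E)^{-1}\cap s(C)$ for a cone $C$ and a vector bundle $E$ over $X$, exactly paralleling the proof of Proposition~\ref{prop-easy-case}.

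The key local ingredient is the case where $Y\to Z$ is genuinely (not merely formally) a section of a smooth map $P\to Y$. Here the standard local structure theorem for smooth morphisms implies that such a section is étale locally the zero section of a trivial vector bundle $Y\times\Af^r\to Y$. Since normal cones are compatible with étale base change, Fulton's Proposition~\ref{prop-easy-case} extends to this global setting by étale descent, yielding $C_XP \cong C_XY\oplus N_YP|_X$.

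To bridge from the genuine smooth-section case to the formal local hypothesis, I would exploit the fact that the associated graded ring $\bigoplus_{n\geq 0} I_X^n/I_X^{n+1}\subset \OO_Z$, which defines $C_XZ$, is controlled at each closed point by the formal completion. The formal local hypothesis then ensures that $C_XZ$ and $C_XY\oplus N_YZ|_X$ agree after formal completion at each closed point of $X$. Characteristic zero should enter at this juncture, most plausibly via resolution of singularities or Artin-style approximation, allowing pointwise formal isomorphisms to be promoted to étale local ones (reducing to the previous case) or to a global comparison after passage to a resolution where Proposition~\ref{prop-smooth-case} applies directly and can then be pushed forward.

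The principal obstacle is precisely this globalization step: translating pointwise formal-local data into an equality of Segre classes in $A_*X$. Since Segre classes of general cones are not determined by their pointwise formal structure in a direct way, the argument will likely require a careful descent, or a reduction to the smooth case via a common resolution, with the formal local hypothesis serving to control how the various normal bundles and cones relate after the resolution. Managing this interplay between formal and global geometry is the technical heart of the proof, and is the step where the characteristic zero assumption is essential.
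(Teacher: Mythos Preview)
Your primary strategy---proving a global cone decomposition $C_XZ \cong C_XY \oplus N_YZ|_X$---aims for more than the paper establishes, and the obstacle you flag is genuine rather than merely technical. Even after using Artin approximation to upgrade the formal hypothesis to an \'etale-local one, the resulting \'etale-local retractions $P\to Y$ do not glue, so there is no global map $C_XZ \to C_XY$ on which to hang an exact sequence of cones, let alone a splitting. The associated graded algebra is a global object on $X$, and pointwise formal or \'etale-local isomorphisms of such algebras need not assemble into a global one (the same phenomenon that prevents locally trivial line bundles from being globally trivial). So the direct cone-comparison route, as stated, does not close.

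The paper follows your secondary suggestion---pass to a resolution, apply Proposition~\ref{prop-smooth-case}, push forward---but the idea you are missing is \emph{how} the formal-local hypothesis constrains the resolution. One applies functorial resolution of singularities to $Z$, obtaining a smooth $\tilde{Z}=\fname{BR}(Z)$, and sets $\tilde{Y}=Y\times_Z\tilde{Z}$. The crux is to show $\tilde{Y}$ is smooth. Artin approximation turns the formal hypothesis, at each closed point $y\in Y$, into a common \'etale neighborhood of $(Y\xrightarrow{f}Z)$ and $(Y\xrightarrow{s}P)$ with $\pi:P\to Y$ smooth. Since $\fname{BR}$ commutes with smooth (hence \'etale) base change, one computes \'etale-locally over $Y$ that $f^*\fname{BR}(Z)$ agrees with $s^*\fname{BR}(P)=s^*\pi^*\fname{BR}(Y)=\fname{BR}(Y)$, which is smooth. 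Hence $\tilde{Y}$ is smooth, the vertical maps in the square are proper birational, $N_{\tilde Y}\tilde Z\cong h^*N_YZ$, and the Segre identity follows from Proposition~\ref{prop-smooth-case} on $\tilde Y\to\tilde Z$ together with proper pushforward.

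The conceptual point is that functoriality of $\fname{BR}$ replaces the impossible task of gluing the local $P$'s by applying a single canonical construction to $Z$ and using the \'etale-local comparisons only to \emph{verify smoothness} of the resulting fiber product. This is the step your proposal gestures at but does not supply.
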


This property of $f$ seems to capture the notion of a tubular neighborhood, as we can formally locally view it as a section of a vector bundle. 

We call it a ``cancellation theorem'' since intuitively, it tells us that in order to calculate $s(X,Y)$, we can first calculate $s(X,Z)$, and then ``cancel out'' the contribution of the embedding $Y\rightarrow Z$. 

The theorem can be useful in practice, since there exist algorithms which have been implemented in Macaulay2 that can compute the Segre class of an embedding $X\rightarrow \PP^n_k$ \cite{EJP}. 
Our theorem allows us to use this algorithm in order to compute $s(X,Y)$ when both $X$ and $Y$ are subschemes of $\PP^n$ and the embedding $Y\rightarrow \PP^n$ satisfies our condition. 
In particular, proposition \ref{prop-smooth-case} allows us to compute $s(X,Y)$ when $Y$ is a smooth intersection of hypersurfaces in $\PP^n_k$ since we then know $c(N_Y\PP^n_k)$ as well. 

Theorem \ref{thm-main-theorem} is particularly useful when the spaces $Y$ and $Z$ represent functors and the embedding $Y\rightarrow Z$ corresponds to a natural transformation of these functors. In this case, the formal neighborhoods pro-represent local deformation functors which are typically easy to describe. 

As an example of this, in section \ref{sec:RST-example} we will use theorem \ref{thm-main-theorem} in order to deduce a generalization of the Riemann Kempf formula to integral curves. As a corollary, we obtain the following generalization of the Riemann singularity theorem which was conjectured in \cite{CMK}:

\begin{corA}
Let $k$ be an algebraically closed field of characteristic $0$ and let $X$ be a projective integral curve of arithmetic genus $p$ over $k$ with at most planar singularities.
Let $x$ be a $k$-point of the compactified Jacobian $P_0$ corresponding to an rank-1 torsion free sheaf $\mathcal{I}$.
Let $\Theta$ denote the image of the Abel Jacobi map $\mathcal{A}^{p-1}_{\omega}$ in $P_0$. Then
\[
\mult_x\Theta = \mult_xP_{0} \cdot (h^1(X,\mathcal{I}) - 1)
\]
where $h$ is the first Chern class of the canonical bundle on $(\mathcal{A}^{p-1}_{\omega})^{-1}(x)\cong\PP^r_k$ and $r = h^1(X,\mathcal{I}) - 1$.
\end{corA}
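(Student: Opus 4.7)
The plan is to apply Theorem \ref{thm-main-theorem} to embeddings arising from the Abel--Jacobi map $\mathcal{A}^{p-1}_\omega\colon\mathcal{H}\to P_0$, where $\mathcal{H}=\Hilb^{p-1}(X)$ parametrizes length-$(p-1)$ subschemes $Z\subset X$ and $\mathcal{A}^{p-1}_\omega$ sends $Z$ to $[\mathcal{I}_Z\otimes\omega]$. Its image is $\Theta$, and the fiber over $x=[\mathcal{I}]$ is $\PP H^0(X, \mathcal{I}^\vee\otimes\omega)\cong\PP^r$ with $r=h^1(X,\mathcal{I})-1$ by Serre duality. Planarity of the singularities enters through the Altman--Iarrobino--Kleiman theorem: $\mathcal{H}$ is smooth of dimension $p$, and $\mathcal{A}^{p-1}_\omega$ is birational onto $\Theta$.

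Next, I would apply Theorem \ref{thm-main-theorem} to the chain $\{x\}\hookrightarrow\Theta\hookrightarrow P_0$ to obtain
\[
s(x,\Theta) \;=\; c(N_\Theta P_0|_x)\cap s(x,P_0).
\]
The nontrivial content is verifying the formal local hypothesis for $\Theta\hookrightarrow P_0$: at each $k$-point, the embedding is formally a section of a smooth map. This is checked via deformation theory of rank-1 torsion free sheaves on $X$. For planar singularities, the local deformation functor of such a sheaf has an explicit description, and $\Theta$ corresponds formally to the vanishing locus of the obstruction to extending a nonzero map $\mathcal{I}\hookrightarrow\omega$; this single function yields the required formal section of a smooth projection.

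I would then compute $c(N_\Theta P_0|_x)$ via the Abel--Jacobi map. The normal bundle $N_\Theta P_0$ pulls back along $\mathcal{A}^{p-1}_\omega$ to a bundle on $\mathcal{H}$ related to $N_F\mathcal{H}$ through the differential of $\mathcal{A}^{p-1}_\omega$, which is an isomorphism off $F$ by generic finiteness. A projection formula on $F\cong\PP^r$, expressed in terms of the canonical class $h=c_1(\omega_{\PP^r})$, extracts the combinatorial factor $r=h^1(X,\mathcal{I})-1$, and taking degrees in the Segre identity above yields $\mult_x\Theta=\mult_xP_0\cdot r$.

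The main obstacle is the verification of the formal section-of-smooth-map hypothesis for $\Theta\hookrightarrow P_0$; this is where the characteristic zero hypothesis and the planarity assumption both play essential roles, and where the full strength of Theorem \ref{thm-main-theorem} over Proposition \ref{prop-smooth-case} is truly needed, since $P_0$ may be genuinely singular at $x$ even though $\mathcal{H}$ is smooth.
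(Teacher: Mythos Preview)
Your approach has a fundamental gap: the formal local hypothesis of Theorem \ref{thm-main-theorem} does \emph{not} hold for the embedding $\Theta\hookrightarrow P_0$ at the points of interest. Indeed, if it did, then applying the theorem to $\{x\}\hookrightarrow\Theta\hookrightarrow P_0$ would give $s(x,\Theta)=c(N_\Theta P_0|_x)\cap s(x,P_0)$; since $x$ is a point and $N_\Theta P_0$ is a line bundle, the Chern class acts trivially on $[x]$, and you would conclude $\mult_x\Theta=\mult_xP_0$. Already in the smooth case this contradicts the Riemann singularity theorem whenever $r\geq 2$. In fact the embedding of the theta divisor into the (smooth) Jacobian is precisely Fulton's example \cite[4.2.8]{Fulton} showing that naive cancellation fails for regular embeddings; a Cartier divisor singular at $x$ cannot formally look like a section of a smooth map there. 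Your heuristic that ``$\Theta$ is the vanishing locus of a single function, hence a section of a smooth projection'' confuses being locally principal with the much stronger requirement of Theorem \ref{thm-main-theorem}.

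The paper avoids this by applying the cancellation theorem upstairs, to the chain
\[
(\mathcal{A}^d_\omega)^{-1}(x)\cong\PP^r_k \hookrightarrow \Quot^d_{(\omega/X/k)} \xrightarrow{q} \Quot^{d+s}_{(\omega/X/k)}
\]
for $d=p-1$ and $s$ large. Here $q$ is obtained by adding fixed smooth points $x_1,\dots,x_s$ to the quotient, and Lemma \ref{lem-local-quot-structure} verifies the formal local hypothesis for $q$ by an explicit deformation-theoretic argument (the deformations of a length-$(i{+}1)$ subscheme split, formally, into deformations of the length-$i$ part times deformations of the extra smooth point). One then computes the Segre class $s(\PP^r,\Quot^{d+s})$ via the high-degree Abel--Jacobi map, which is smooth, cancels by Theorem \ref{thm-main-theorem}, and finally pushes forward along the birational map $\mathcal{A}^{p-1}_\omega$ to $\Theta$. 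The factor $r$ emerges from the degree of $(1+h)^{r}\cap[\PP^r]$, not from $c(N_\Theta P_0)$.
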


In that section, we use theorem \ref{thm-main-theorem} in order to study the change in various Segre classes as we increase the degree of our Quot schemes. This allows us to deduce facts about the Abel Jacobi map in small degrees from the simpler Abel Jacobi maps in high degrees.

One obstacle that arises in the proof of theorem \ref{thm-main-theorem} is that the stated condition is inherently local, and schemes with non-trivial Chow rings can frequently be covered by affine opens with trivial Chow rings, rendering vacuous local intersection theoretic statements.
Intuitively we would like to glue these local bundles together to a bundle $E\rightarrow Y$ such that $Y\xrightarrow{f}Z$ is the composition of the zero section $Y\rightarrow E$ and an open embedding $E\rightarrow Z$, but this is too much to hope for.

We circumvent this problem in two stages. The first stage is to prove the theorem in the case where $Y$ and $Z$ are smooth (in which case the local property comes for free).

We then use Hironaka's functorial resolution of singularities in order to reduce the theorem to the smooth case. 
The key part of this step is that in some sense, the unique assignment $\fname{BR}:\catname{Var}\rightarrow\catname{Set}$ allows us to package the local data that we get from the formal properties of $f$ into a morphism of \emph{smooth} schemes $Y'\rightarrow Z'$, whose normal bundle plays the role of our elusive tubular neighborhood bundle $E$. 
Furthermore, since these schemes will map properly and birationally onto $Y\rightarrow Z$, the functorial properties of Fulton's intersection theory allow us to reduce the theorem to the smooth case.

\subsection{Acknowledgments}
In this section we will present a hopefully accurate historic account of the development of the ideas in this paper. In \cite{CMK}, Sebastian Casalania-Martin and Jesse Kass found a generalization of the Riemann Singularity theorem to nodal curves and conjectured what the general formula should be. 
After looking at this, my advisor Vivek Shende noticed that one could arrive at the conjecture by formally following Fulton's proof of the classical theorem \cite[4.3.2]{Fulton}, but that the step relating lower order Hilbert schemes of points to the higher order ones was no longer true for general, or even regular, embeddings of singular schemes.
After trying a couple of different ways of circumventing this issue, it became apparent that the best sort of theorem that one could hope for was a cancellation theorem such as in theorem \ref{thm-main-theorem}.
A few months later, I went to a talk by Dan Edidin at the Stanford Algebraic Geometry Seminar where he discussed his results in \cite{Edidin}. 
One of the theorems in that paper seemed to share certain technical aspects with the cancellation of Segre classes problem, so after the talk I asked him how he resolved them.
During that conversation he told me about the trick of combining Hironaka's theorem with the Artin approximation theorem which is ultimately a key ingredient of this paper.

\section{The Smooth Case}
In this section we prove proposition \ref{prop-smooth-case}. It seems to be easier to prove the following slightly stronger statement.

\begin{prop}\label{prop-smooth-case-stronger}
Let $X\rightarrow Y$ be a closed embedding and $Y\rightarrow Z$ a regular embedding where $X$ and $Y$ are $k$-schemes and $Z$ is a smooth $k$-scheme. Then, the class
\[
c(N_YZ|_X)\cap s(X,Z)
\]
is independent of the regular embedding $Y\rightarrow Z$ and the smooth scheme $Z$.
\end{prop}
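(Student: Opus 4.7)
The plan is the classical ``graph trick'' for comparing two embeddings by passing through their product. Given two regular embeddings $f_1\colon Y\rightarrow Z_1$ and $f_2\colon Y\rightarrow Z_2$ into smooth $k$-schemes, I form the product embedding $(f_1,f_2)\colon Y\rightarrow Z_1\times Z_2$, which is again regular because it factors as $Y\xrightarrow{\Gamma_{f_1}} Z_1\times Y\xrightarrow{\mathrm{id}\times f_2} Z_1\times Z_2$, where the first map (the graph of $f_1$) is a section of the smooth projection $Z_1\times Y\rightarrow Y$ and the second is the product of the identity with a regular embedding. By symmetry in $Z_1\leftrightarrow Z_2$, it suffices to prove that $c(N_YZ_1|_X)\cap s(X,Z_1)$ equals $c(N_Y(Z_1\times Z_2)|_X)\cap s(X,Z_1\times Z_2)$.

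Two parallel computations are needed. On the normal-bundle side, the short exact sequence of normal bundles arising from the factorization above gives $c(N_Y(Z_1\times Z_2))=c(f_1^*T_{Z_1})\,c(N_YZ_2)$, and the symmetric factorization through $Y\times Z_2$ gives $c(N_Y(Z_1\times Z_2))=c(f_2^*T_{Z_2})\,c(N_YZ_1)$. On the Segre-class side, I would mirror this by factoring $X\rightarrow Z_1\times Z_2$ as $X\xrightarrow{\Gamma} X\times Z_2\xrightarrow{h} Z_1\times Z_2$, where $\Gamma$ is the graph of $f_2|_X$ (a regular embedding with normal bundle $f_2^*T_{Z_2}|_X$, since it is a section of the smooth projection $p\colon X\times Z_2\rightarrow X$), and $h$ is the flat base change of $f_1|_X$ along the smooth projection $Z_1\times Z_2\rightarrow Z_1$. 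Flat base change gives $C_{X\times Z_2}(Z_1\times Z_2)=p^*C_XZ_1$, and because $\Gamma$ is a section of $p$ the resulting filtration on $C_X(Z_1\times Z_2)$ splits, yielding the cone decomposition
\[
C_X(Z_1\times Z_2)\cong f_2^*T_{Z_2}|_X\oplus C_XZ_1.
\]
Segre-class multiplicativity for a vector-bundle summand then produces the identity $s(X,Z_1\times Z_2)=c(f_2^*T_{Z_2}|_X)^{-1}\cap s(X,Z_1)$.

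Combining the two ingredients,
\[
c(N_Y(Z_1\times Z_2)|_X)\cap s(X,Z_1\times Z_2)=c(f_2^*T_{Z_2}|_X)\,c(N_YZ_1|_X)\cdot c(f_2^*T_{Z_2}|_X)^{-1}\cap s(X,Z_1)=c(N_YZ_1|_X)\cap s(X,Z_1),
\]
and the symmetric argument yields the analogous equality with $Z_2$ in place of $Z_1$, completing the proof. The main obstacle is establishing the cone decomposition at the level of cones rather than normal bundles: for a general composition $X\rightarrow W\rightarrow V$ of closed embeddings with $X\rightarrow W$ regular, the cone $C_XV$ is only a torsor under $N_{X/W}$ over $\Gamma^*C_WV$, and the splitting that trivializes this torsor here relies crucially on the fact that $X\rightarrow W$ is a section of a smooth retraction $W\rightarrow X$. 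Verifying this splitting carefully at the level of the graded Rees algebra is the main technical step.
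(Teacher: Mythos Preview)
Your approach is essentially the paper's: both pass through the product $Z_1\times Z_2$ and compare each $Z_i$ to it via the smooth projection, extracting the same two ingredients (the normal-bundle identity for $Y$ and the cone-level identity for $X$). The only difference is that you aim for a direct-sum splitting $C_X(Z_1\times Z_2)\cong f_2^*T_{Z_2}|_X\oplus C_XZ_1$ and flag its Rees-algebra verification as the main obstacle, whereas the paper simply cites \cite[4.2.6]{Fulton} for the exact sequence of cones $0\to (f_2|_X)^*T_{Z_2}\to C_X(Z_1\times Z_2)\to C_XZ_1\to 0$ and then \cite[4.1.6]{Fulton} for Segre-class multiplicativity across such a sequence---so no splitting is actually needed, and your ``main technical step'' dissolves.
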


The class in question is reminiscent of Fulton's canonical class, and indeed, the proof of independence is similar. 

Note that in the case where $X=Y$ and $Y\rightarrow Y$ is the identity map, the proposition is trivial since then $s(X,Z)$ is the inverse of $c(N_YZ)$. On the other hand, when $X\rightarrow Y$ is also a regular embedding and $Y\rightarrow Z$ is the identity map, then $c_F(X)$ is equal to the product of $c(X)$ and the class in question, so the proposition becomes equivalent to the well definedness of the Fulton canonical class. The proposition in the general case is a kind of interpolation between these extremes.

It would be interesting to see if the methods in the second section could be extended to remove the requirement that $Z$ be smooth.

\begin{proof}
Let $X\xrightarrow{f}Y$ be a closed embedding of $k$-schemes, $Z_1$ and $Z_2$ smooth $k$-schemes and $Y\xrightarrow{g_i}Z_i$ regular embeddings. We want to show that
\[
c(N_YZ_1)\cap s(X,Z_1)=c(N_YZ_2)\cap s(X,Z_2)
\]

As in \cite[4.2.6]{Fulton}, since $Z_1$ and $Z_2$ are dominated by the smooth schemes $Z_1\times Z_2$, by replacing $Z_1$ with this product and replacing $g_1$ with the induced map, we can assume without loss of generality that there exists a smooth map $Z_1\xrightarrow{\rho}Z_2$ such that the following diagram commutes:
\[
\xymatrix @ R=0.5pc {
  &  &  Z_1 \ar[dd]^\rho \\
X \ar[r]^f & Y \ar[ru]^{g_1} \ar[rd]_{g_2} \\
  &  &  Z_2
}
\]

Now, by \cite[4.2.6]{Fulton}, we have the following short exact sequences of cones (in the sense of \cite[4.1.6]{Fulton}).

\begin{itemize}
\item $0 \rightarrow g_1^{*}T_{\rho} \rightarrow C_YZ_1 \rightarrow C_YZ_2 \rightarrow 0$
\item $0 \rightarrow (g_1\circ f)^{*}T_{\rho} \rightarrow C_XZ_1 \rightarrow C_XZ_2 \rightarrow 0$
\end{itemize}

Note that in this case, $C_YZ_i=N_YZ_i$. By pulling back the first sequence of bundles, we obtain:
\[
0 \rightarrow (g_1\circ f)^{*}T_{\rho} \rightarrow f^*N_YZ_1 \rightarrow f^*N_YZ_2 \rightarrow 0
\]

Therefore, by \cite[4.1.6]{Fulton} and the definition of the Segre class we have:
\begin{itemize}
\item $s(f^*N_YZ_2) = c((g_1\circ f)^{*}T_{\rho})\cap s(f^*N_YZ_1)$
\item $s(X,Z_2) = c((g_1\circ f)^{*}T_{\rho})\cap s(X,Z_1)$
\end{itemize}
Since by definition the Chern class is the inverse of the Segre class, the first equality implies that
\[
c((g_1\circ f)^{*}T_{\rho})  = c(f^*N_YZ_1)\cap s(f^*N_YZ_2)
\]

By using this equality to replace $c((g_1\circ f)^{*}T_{\rho})$ the second equation listed above, we get the result.
\end{proof}

We can now easily deduce proposition \ref{prop-smooth-case}. Recall that by proposition \ref{prop-easy-case}, the theorem holds in the case where $Y\rightarrow Z$ is the zero section of a vector bundle. The theorem for the smooth case now follows by applying proposition \ref{prop-smooth-case-stronger} to the case where $Y\rightarrow Z_1$ is the map $Y\rightarrow Z$ in the theorem, and $Y\rightarrow Z_2$ is the zero section of the bundle $N_YZ$.

\section{Reduction to the Smooth Case}
As usual in intersection theory, it would be enough to dominate the map $f$ by a map of smooth schemes. In other words, we want to find smooth schemes $M$ and $N$ which fit into the following fiber diagram
\[
\xymatrix{
M \ar[r] \ar[d] &  N \ar[d] \\
Y \ar[r]^f & Z
}
\]

such that the vertical maps are proper birational maps. We formalize this in the following lemma.

\begin{lem}\label{lem-reduction-is-sufficient}
  Let $X\rightarrow Y$ be a closed embedding and $Y\xrightarrow{f} Z$ a regular embedding where $X$, $Y$ and $Z$ are $k$-schemes.
  Suppose we had a regular embedding $\tilde{Y}\xrightarrow{\tilde{f}}\tilde{Z}$ of smooth $k$-schemes together with a fiber diagram
\[
\xymatrix{
\tilde{Y} \ar[r]^{\tilde{f}} \ar[d]^h &  \tilde{Z} \ar[d]^g \\
Y \ar[r]^f & Z
}
\]

such that $h$ and $g$ are proper and birational. Then
\[
s(X,Y) = c(N_YZ|_X)\cap s(X,Z)
\]

\end{lem}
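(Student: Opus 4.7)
The plan is to apply the smooth case (proposition \ref{prop-smooth-case}) upstairs on $\tilde{Y}$ and $\tilde{Z}$, and then push the resulting identity down to $X$ via the proper birational vertical maps.

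First I would form $\tilde{X} := X\times_Y\tilde{Y}$ as a closed subscheme of $\tilde{Y}$. Because the given square is Cartesian and $\tilde{Y} = Y\times_Z\tilde{Z}$, transitivity of fiber products gives $\tilde{X} = X\times_Z\tilde{Z}$; so $\tilde{X}$ is simultaneously the scheme-theoretic preimage of $X$ under $h$ and under $g$. Let $\tilde{h}:\tilde{X}\to X$ denote the induced (proper) morphism. Since $f$ is a regular embedding and the square is Cartesian, normal bundles base change, so $N_{\tilde{Y}}\tilde{Z}\cong h^*N_YZ$, and hence
\[
N_{\tilde{Y}}\tilde{Z}\big|_{\tilde{X}} \;\cong\; \tilde{h}^*\bigl(N_YZ|_X\bigr).
\]

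With this setup, the smooth case applied to $\tilde{X}\hookrightarrow\tilde{Y}\xrightarrow{\tilde{f}}\tilde{Z}$ yields
\[
s(\tilde{X},\tilde{Y}) \;=\; c\bigl(N_{\tilde{Y}}\tilde{Z}|_{\tilde{X}}\bigr)\cap s(\tilde{X},\tilde{Z}).
\]
I would then apply $\tilde{h}_*$. The identification of $N_{\tilde{Y}}\tilde{Z}|_{\tilde{X}}$ as a pullback, combined with the projection formula, lets me move $c(N_YZ|_X)$ outside the pushforward:
\[
\tilde{h}_* s(\tilde{X},\tilde{Y}) \;=\; c(N_YZ|_X)\cap \tilde{h}_* s(\tilde{X},\tilde{Z}).
\]

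The last step, which I expect to be the main point where the hypothesis that $h$ and $g$ are birational is used in an essential way, is to identify $\tilde{h}_* s(\tilde{X},\tilde{Y}) = s(X,Y)$ and $\tilde{h}_* s(\tilde{X},\tilde{Z}) = s(X,Z)$. For this I would invoke the birational invariance of Segre classes from \cite[Proposition 4.2]{Fulton}: if $p:Y'\to Y$ is a proper birational morphism and $W\subset Y$ a closed subscheme with scheme-theoretic preimage $W' := W\times_Y Y'$, then $p_* s(W',Y') = s(W,Y)$. Applied to $h$ with $W=X$ this gives the first identification, and applied to $g$ with $W=X$ (using $\tilde{X}=X\times_Z\tilde{Z}$) it gives the second; the key subtlety is that both identifications refer to the same $\tilde{X}$, which is precisely what the Cartesian hypothesis ensures. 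Substituting back delivers the formula of the lemma.
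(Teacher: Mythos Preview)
Your proof is correct and follows the same strategy as the paper: form $\tilde{X}$ via fiber product, apply the smooth case upstairs, identify $N_{\tilde{Y}}\tilde{Z}\cong h^*N_YZ$, and push down using \cite[4.2]{Fulton} together with the projection formula. The only minor difference is in justifying the normal-bundle identification: the paper argues via the two conormal sequences and the smoothness of $\tilde{Y},\tilde{Z}$, whereas you invoke base change directly---just be aware that the isomorphism here uses that $\tilde{f}$ is \emph{also} given to be regular (so both conormal sheaves are locally free of the same rank and the natural surjection is an isomorphism), not merely that $f$ is regular and the square is Cartesian.
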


\begin{proof}
Consider the extended fiber diagram

\[
\xymatrix{
\tilde{X} \ar[r] \ar[d]^\pi & \tilde{Y} \ar[r]^{\tilde{f}} \ar[d]^h &  \tilde{Z} \ar[d]^g \\
X \ar[r] & Y \ar[r]^f & Z
}
\]

By \cite[4.2]{Fulton}, we know that $\pi_*s(\tilde{X},\tilde{Y}) = s(X,Y)$ and that $\pi_*s(\tilde{X},\tilde{Z})=s(X,Z)$. 
Furthermore, by proposition \ref{prop-smooth-case}, we know that 
\[
s(\tilde{X},\tilde{Y}) = c(N_{\tilde{Y}}\tilde{Z}|_{\tilde{X}})\cap s(\tilde{X},\tilde{Z})
\]

So by proper base change, it suffices to show that in the current scenario, $h^*N_YZ\cong N_{\tilde{Y}}\tilde{Z}$.

To show this, consider the following conormal sequences together with the base change maps
\[
\xymatrix{
         &  h^*(N_YZ)^{\vee} \ar[r]              & h^*\Omega_Z \ar[r] \ar[d]         & h^*\Omega_Y \ar[r] \ar[d]      & 0 \\
0 \ar[r] &  (N_{\tilde{Y}}\tilde{Z})^{\vee} \ar[r] & \Omega_{\tilde{Z}} \ar[r] & \Omega_{\tilde{Y}} \ar[r]     & 0
}
\]

By commutativity, we get a map $h^*(N_YZ)^{\vee} \rightarrow (N_{\tilde{Y}}\tilde{Z})^{\vee}$ which is an isomorphism since the bottom row is a short exact sequence of vector bundles. Taking the dual of this map give us the desired isomorphism.
\end{proof}

We now use a combination of Artin approximation and Hironaka's functorial resolution of singularities to produce a map $\tilde{Y}\rightarrow\tilde{Z}$ which satisfies the requirements of the lemma.

For convenience, we state the relevant proposition from \cite{Artin}.

\begin{thm}(\cite[2.6]{Artin})\label{thm-artin-approx}
Let $X_1$ and $X_2$ be finite type $S$-schemes and let $x_i\in X_i$ be points. 
If the complete local rings $\hat{\OO}_{X_i,x_i}$ are $\OO_S$-isomorphic, then $X_1$ and $X_2$ are locally isomorphic for the etale topology.
By this we mean that there exists a diagram of etale maps
\[
\xymatrix{
   &  X'  \ar[rd] \ar[ld] & \\
X_1 &  &  X_2
}
\]

and a point $x'\in X'$ that maps to $x_1$ and $x_2$ respectively and induces an isomorphism of function fields. 
\end{thm}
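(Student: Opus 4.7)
The plan is to reduce the statement to Artin's main approximation theorem, which asserts that any formal solution to a system of polynomial equations over the henselization of a finitely generated $\OO_S$-algebra at a point can be approximated, modulo arbitrarily high powers of the maximal ideal, by a solution already defined in that henselization.

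First I would replace $X_1$ and $X_2$ by affine open neighborhoods $U_i = \Spec{A_i}$ of $x_i$ and present $A_2$ as $\OO_S[t_1,\ldots,t_n]/(f_1,\ldots,f_m)$. An $S$-morphism $U_1 \to U_2$ sending $x_1$ to $x_2$ is precisely the data of an $n$-tuple $(a_1,\ldots,a_n)$ in the maximal ideal of $A_1$ at $x_1$ satisfying $f_j(a_1,\ldots,a_n) = 0$ for every $j$. The assumed $\OO_S$-isomorphism of complete local rings yields such a formal tuple in $\hat{\OO}_{X_1,x_1}$, and symmetrically a formal tuple in $\hat{\OO}_{X_2,x_2}$ coming from an analogous presentation of $A_1$.

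Next I would apply Artin approximation to each formal solution, producing honest tuples in the henselizations $A_i^h$ that agree with the formal ones modulo a chosen high power of the maximal ideal. Because the henselization is a filtered colimit of residually trivial étale neighborhoods, each approximation descends, after shrinking, to an actual morphism $V_1 \to U_2$ or $V_2 \to U_1$ of étale neighborhoods respecting the distinguished points. Setting $X' = V_1 \times_{U_2} V_2$ along one of these morphisms gives a natural candidate for the common étale neighborhood of $x_1$ and $x_2$, with $x'$ the induced point.

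The main obstacle is that the two approximated maps are not forced to be mutually inverse, only close to being so, and one must check that $X'$ is genuinely étale over both $X_i$ with matching residue fields at $x'$. I would address this by choosing the approximation order $N$ large enough that each composite agrees with the identity on completed local rings modulo the square of the maximal ideal, which then forces étaleness at $x_i$ by the Jacobian criterion. The residue field condition at $x'$ is automatic, since the original isomorphism of complete local rings was assumed to be $\OO_S$-linear, hence identifies residue fields, and the approximation procedure preserves the reduction modulo the maximal ideal.
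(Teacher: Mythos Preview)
The paper does not prove this statement at all: it is quoted verbatim as Corollary~2.6 of Artin's paper and used as a black box. So there is nothing to compare against on the paper's side.

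That said, your sketch is essentially the standard derivation of this corollary from Artin's main approximation theorem, and the core idea is sound. Two remarks. First, the fiber product $X' = V_1 \times_{U_2} V_2$ is superfluous: for $X' \to V_2$ to be \'etale you already need $V_1 \to U_2$ to be \'etale at the relevant point, and once you know that, $V_1$ itself (suitably shrunk) serves as the common \'etale neighborhood---there is no need to construct a map in the other direction. Second, the phrase ``Jacobian criterion'' is not quite the right one here, since $X_1$ and $X_2$ may be singular. The correct argument is that if the approximated homomorphism $\hat{\OO}_{X_2,x_2} \to \hat{\OO}_{V_1,v_1} \cong \hat{\OO}_{X_1,x_1}$ agrees with the given isomorphism modulo the maximal ideal, it is surjective by Nakayama, and then composing with the inverse of the given isomorphism yields a surjective endomorphism of a noetherian ring, which is automatically injective; hence the approximated map is an isomorphism on completions and therefore \'etale at $v_1$. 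Approximation to first order already suffices.
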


We will need a slight generalization of this theorem. The letters denoting the schemes have been modified in order to clarify the application to our current question.
\begin{lem}\label{lem-common-etale-neighborhood}
Let $Y\rightarrow Z_1$ and $Y\rightarrow Z_2$ be maps of finite type $S$-schemes and let $y\in Y$ be a point that maps to $z_i\in Z_i$. Suppose there exists an isomorphism $\varphi$ of formal neighborhoods:

\[
\xymatrix @ R=0.5pc {
  &  \hat{Z_1} \ar[dd]^{\varphi}_{\cong}   \\
\hat{Y} \ar[ru]^{\hat{f}} \ar[rd]^{\hat{s}} & \\
  & \hat{Z_2}
}
\]

Where $\hat{Y}$ and $\hat{Z_i}$ are the formal completions of $Y$ and $Z_i$ and along the points $y$ and $z_i$ respectively.
Then the maps $Y\rightarrow Z_1$ and $Y\rightarrow Z_2$ have a common etale neighborhood. 

By this we mean that we have $k$-schemes $U$ and $V$ together with the following two diagrams:
\[
\xymatrix{
V \ar[r]^{\beta_1} \ar[dr]_{\varphi}    &   V_1 \ar[r]^{\alpha_1} \ar[d]^{\gamma_1}  &  U \ar[d]^{\delta_1} &  &  
V \ar[r]^{\beta_2}                    &   V_2 \ar[r]^{\alpha_2} \ar[d]^{\gamma_2}  &  U \ar[d]^{\delta_2}   \\
                                   &  Y \ar[r]^{f_i}   &   Z_1   &   &
                                   &  Y \ar[r]^{f_2}   &   Z_2
}
\]

such that $\beta_i$, $\gamma_i$, $\delta_i$ and $\phi=\gamma_1\beta_1$ are etale, $\alpha_1\beta_1 = \alpha_2\beta_2$, and $V$ is an etale neighborhood of $y$.

\end{lem}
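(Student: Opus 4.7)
The strategy is to apply the Artin approximation theorem to produce a common etale neighborhood $U$ of $z_1$ and $z_2$ realizing $\varphi$, to pull back the two maps $Y \to Z_i$ along this neighborhood, and then to refine on the fiber product over $Y$ to an open subset where the two resulting maps down to $U$ agree on the nose.

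First I would invoke Theorem \ref{thm-artin-approx} on the pair $(Z_1, z_1), (Z_2, z_2)$ to obtain a scheme $U$, a point $u \in U$, and etale maps $\delta_i: U \to Z_i$ with $\delta_i(u) = z_i$. A standard strengthening of the theorem -- encoding $\varphi$ as part of the system of polynomial equations that Artin approximation is allowed to solve -- produces $U$ with the additional property that $\hat\delta_2 \circ \hat\delta_1^{-1} = \varphi$ as isomorphisms of formal completions $\hat{Z_1} \to \hat{Z_2}$. Having fixed $U$, I would form $V_i := Y \times_{Z_i} U$ with projections $\gamma_i: V_i \to Y$ (etale as a base change of $\delta_i$) and $\alpha_i: V_i \to U$, and let $v_i \in V_i$ be the point lying over $(y, u)$. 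Under the canonical identification $\hat{V_i} \cong \hat{Y}$ coming from $\hat\gamma_i$, the map $\hat\alpha_i$ corresponds to $\hat\delta_i^{-1} \circ \hat f_i: \hat{Y} \to \hat{U}$. Combining $\varphi \circ \hat f = \hat s$ with $\hat\delta_2 = \varphi \circ \hat\delta_1$ gives $\hat\delta_1^{-1} \circ \hat f = \hat\delta_2^{-1} \circ \hat s$, so the two maps $\hat\alpha_1$ and $\hat\alpha_2$ agree.

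Next, set $W := V_1 \times_Y V_2$ with distinguished point $w = (v_1, v_2)$ over $y$ and projections $p_i: W \to V_i$; the structure map $W \to Y$ is etale. By the preceding paragraph the two compositions $\alpha_1 p_1$ and $\alpha_2 p_2: W \to U$ agree on $\hat{W}_w$. Choosing an affine neighborhood $\Spec{B} \subset W$ of $w$ and an affine open $\Spec{R} \subset U$ containing the image, the difference of the two ring maps $R \to B$ vanishes in $\hat\OO_{W,w}$, hence in $\OO_{W,w}$ by faithful flatness of completion, and therefore on some Zariski open neighborhood $V$ of $w$ in $W$ (using that $R$ is finitely generated over $k$, so that finitely many generators suffice). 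Setting $\beta_i := p_i|_V$ then gives $\alpha_1 \beta_1 = \alpha_2 \beta_2$ by construction; the map $\phi := \gamma_1 \beta_1 = \gamma_2 \beta_2$ is the structure map $V \to Y$, which is etale as a composition of etale morphisms and an open immersion; and $\beta_i, \gamma_i, \delta_i$ are all etale by construction, with $V$ an etale neighborhood of $y$.

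The main technical obstacle I anticipate is the first step -- promoting Theorem \ref{thm-artin-approx}, as stated, to the strengthening in which the induced formal isomorphism between $\hat{Z_1}$ and $\hat{Z_2}$ is exactly the prescribed $\varphi$ rather than some a priori unrelated isomorphism. This requires extracting from Artin's proof, rather than from the clean statement reproduced in the paper, the ability to solve an arbitrary system of equations with a formal solution; the residue field claim of Theorem \ref{thm-artin-approx} is a special case of this flexibility applied to a different system. Once this refinement is in hand, the rest of the argument is purely formal manipulation followed by a Noetherian / faithful-flatness descent from formal agreement to Zariski-local agreement.
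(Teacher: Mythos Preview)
Your argument is correct and arrives at the same conclusion, but it diverges from the paper's proof in the second step. The paper applies Artin approximation \emph{twice}: first to $Z_1,Z_2$ over $S$ to produce $U$, and then, after forming $V_i=Y\times_{Z_i}U$, a second time to $V_1,V_2$ as $U$-schemes to produce $V$ directly. Your route instead forms $W=V_1\times_Y V_2$ and extracts $V$ as a Zariski open of $W$ on which the two maps to $U$ agree, using injectivity of $\OO_{W,w}\to\hat\OO_{W,w}$ and finite generation. This is a perfectly good substitute for the second invocation of Artin: it is more elementary and makes the equality $\gamma_1\beta_1=\gamma_2\beta_2$ manifest (it is the structure map of the fiber product), at the cost of a small amount of bookkeeping. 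The paper's two-shot approach is terser but uses the heavier tool twice.

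Your flagged obstacle---that one needs the common \'etale neighborhood $U$ to induce the \emph{given} formal isomorphism $\varphi$, not merely some isomorphism---is genuine, and in fact the paper's proof relies on exactly the same strengthening: the claim that $\hat\OO_{V_1,v_1}$ and $\hat\OO_{V_2,v_2}$ are $\OO_U$-isomorphic already requires $\hat\delta_1^{-1}\hat f=\hat\delta_2^{-1}\hat s$, which only follows if $\hat\delta_2\circ\hat\delta_1^{-1}=\varphi$. This is the content of Artin's approximation theorem in its original ``solve a system with a prescribed formal solution'' form, of which Theorem~\ref{thm-artin-approx} as stated is a corollary; so your caveat applies equally to both proofs and is not a defect of your approach.
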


\begin{proof}
By Artin approximation (theorem \ref{thm-artin-approx}), there exists a scheme $U$ satisfying the conditions in the theorem. 
For ease of notation, we define $V_i=Y\times_{Z_i}U$ and $v_i=(y,u)\in V_i$. 
Since the natural maps $V_1 \rightarrow Y$ are etale, by passing again to formal neighborhoods we see that $\hat{\OO}_{V_i,x_i}$ are $\OO_U$
isomorphic. The theorem then follows from a second application of Artin approximation.
\end{proof}

We will now combine this lemma with Hironaka's functorial resolution of singularities in order to reduce theorem \ref{thm-main-theorem}
to the smooth case. Or in other words, in order to achieve the conditions in lemma \ref{lem-reduction-is-sufficient}.

Before starting we'll recall the precise statement of functorial resolutions from \cite{Kollar}. Note that we are only stating the subset of
the theorem that we'll be using.

\begin{thm}(\cite[3.36]{Kollar})\label{thm-resolution-singularities}
Let $k$ be a field with characteristic zero. Then there exists an assignment $\fname{BR}$ from finite type $k$-schemes to schemes such that:
\begin{itemize}
\item $\fname{BR}(X)\rightarrow X$ may be constructed from $X$ by a finite sequence of blowups.
\item $\fname{BR}(X)$ is smooth.
\item $\fname{BR}$ commutes with pullbacks along smooth morphisms.
\end{itemize}
\end{thm}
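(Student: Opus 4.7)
The plan is to construct $\fname{BR}(X)$ as an iterated sequence of blowups along canonically chosen smooth centers, with termination controlled by a resolution invariant. First I would attach to each point $x\in X$ a local invariant $\mathrm{inv}(X,x)$, valued in a Noetherian well-ordered set such as a lexicographically ordered tuple of non-negative rationals, depending only on the completed local ring $\hat{\OO}_{X,x}$. The leading coordinate would measure the worst singularity, for instance via the Hilbert--Samuel function or the multiplicity; the subsequent coordinates would be built inductively by choosing a smooth hypersurface of maximal contact through the top locus and recursing on the coefficient ideal restricted to that hypersurface. The design goals are that $\mathrm{inv}(X,-)$ is upper semicontinuous, that its top stratum $\mathrm{Max}(X)$ is a smooth closed subscheme of $X$, and that $\mathrm{inv}$ attains its minimum precisely at the smooth points of $X$.

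Next I would set $\fname{BR}_0(X)=X$ and inductively $\fname{BR}_{n+1}(X)=\mathrm{Bl}_{\mathrm{Max}(\fname{BR}_n(X))}\fname{BR}_n(X)$, stopping once $\mathrm{Max}$ is empty (equivalently, once the scheme is smooth). The main obstacle, as in every proof of resolution, is the \emph{drop step}: one must show that after blowing up the canonical smooth center the maximum value of $\mathrm{inv}$ over the new scheme is strictly smaller, in the well-ordering, than before. This is the technical heart of Hironaka's theorem, and is proved by a delicate transformation law for coefficient ideals under a blowup with smooth center contained in their top locus. Making this work forces one to augment the invariant with an extra component recording the configuration of previously created exceptional divisors, so that their contribution can be separated off and does not obstruct the induction. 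Once the drop step is established, termination is automatic, since a strictly decreasing sequence in a well-ordered set must be finite; this produces items (1) and (2).

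Finally, for the functoriality assertion (3), it suffices to check that for any smooth morphism $\pi\colon X'\to X$ and any $x'\in X'$ one has $\mathrm{inv}(X',x')=\mathrm{inv}(X,\pi(x'))$ and $\mathrm{Max}(X')=\pi^{-1}(\mathrm{Max}(X))$ as smooth closed subschemes. Since $\pi$ smooth implies $\hat{\OO}_{X',x'}\cong \hat{\OO}_{X,\pi(x')}[\![y_1,\ldots,y_r]\!]$ as $\hat{\OO}_{X,\pi(x')}$-algebras, the Hilbert--Samuel function and multiplicity are preserved, while hypersurfaces of maximal contact and coefficient ideals pull back canonically under the adjunction of the variables $y_i$. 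An induction on the coordinates of $\mathrm{inv}$ then yields the pointwise equality. Because blowing up along a smooth center commutes with smooth base change, a second induction, this time on the step index $n$, identifies $\fname{BR}(X')$ with $\fname{BR}(X)\times_X X'$, giving the compatibility required by the theorem.
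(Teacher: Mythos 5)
This statement is not proved in the paper at all: it is quoted verbatim from Koll\'ar's book (\cite[3.36]{Kollar}), and the paper explicitly defers the construction to that reference. So the relevant comparison is between your sketch and the actual proof in the literature, which is a book-length argument. Your outline does capture the architecture of the standard proof (Hironaka via Bierstone--Milman/W\l odarczyk/Koll\'ar): an upper semicontinuous invariant built from maximal contact and coefficient ideals, blowup of the maximal stratum, a drop step, termination by well-ordering, and functoriality for smooth morphisms via the formal local product structure $\hat{\OO}_{X',x'}\cong \hat{\OO}_{X,\pi(x')}[\![y_1,\ldots,y_r]\!]$.

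However, as a proof the proposal has genuine gaps, and they sit exactly where the entire difficulty of the theorem lives. First, hypersurfaces of maximal contact exist only locally and are not unique; your invariant is defined by ``choosing'' one and recursing, so you must prove the resulting invariant (and hence the center $\mathrm{Max}$) is independent of all choices and glues globally. This is the central obstruction that W\l odarczyk's homogenization and Bierstone--Milman's independence arguments were invented to overcome, and it is never addressed. Second, the drop step is explicitly deferred (``this is the technical heart''), and the smoothness of the top stratum is listed as a ``design goal'' rather than established --- the naive top locus of the Hilbert--Samuel function need not be smooth, and arranging a smooth canonical center is part of the same inductive machinery. Third, the bookkeeping of exceptional divisors, which you mention must be ``augmented'' into the invariant, interacts with both of the previous points and cannot be separated from them. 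Deferring these items means the proposal is a correct table of contents for the known proof, not a proof; for the purposes of this paper the honest treatment is the one the author takes, namely citing \cite{Kollar} and using only the three listed properties as a black box.
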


Note that the map $\fname{BR}(X)\rightarrow X$ is part of the data of $\fname{BR}(X)$ as it is simply the composition of blowups.
See \cite{Kollar} for details on the construction.

We now apply this theorem to our situation.

\begin{prop}\label{lem-reduction-to-smooth}
Let $k$ be a field of characteristic zero, and let $X$, $Y$ and $Z$ be varieties over $k$. 
Suppose that we have a closed embedding $X\rightarrow Y$ and regular embedding $Y\xrightarrow{f} Z$ that formally locally looks like a section $Y\xrightarrow{s}P$ of some smooth map $P\xrightarrow{\pi} Y$ as in theorem \ref{thm-main-theorem}.
Then there is a regular embedding $\tilde{Y}\xrightarrow{\tilde{f}}\tilde{Z}$ of smooth schemes over $k$ together with a fiber diagram
\[
\xymatrix{
\tilde{Y} \ar[r]^{\tilde{f}} \ar[d]^h &  \tilde{Z} \ar[d]^g \\
Y \ar[r]^f & Z
}
\]

such that $h$ and $g$ are proper and birational.
\end{prop}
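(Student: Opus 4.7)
The construction is essentially forced: set $\tilde{Z} := \fname{BR}(Z)$, which is smooth with a proper birational $g : \tilde{Z} \to Z$ by Theorem \ref{thm-resolution-singularities}, and let $\tilde{Y} := \tilde{Z} \times_Z Y$ with induced maps $h$ and $\tilde{f}$. The fiber diagram is then tautological, $h$ is proper as a base change of $g$, and $\tilde{f}$ is a closed embedding. What remains is to verify (a) $\tilde{Y}$ is smooth, (b) $\tilde{f}$ is a regular embedding, and (c) $h$ is birational.

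All three properties can be checked etale-locally on $Y$, so I would fix a closed point $y \in Y$ and work in an etale neighborhood. The hypothesis of Theorem \ref{thm-main-theorem} furnishes a smooth map $\pi : P \to Y$ and a section $s : Y \to P$ whose formal completion at $y$ matches that of $f$. Applying Lemma \ref{lem-common-etale-neighborhood} with $Z_1 = Z$, $Z_2 = P$ promotes this formal identification to an etale one: there is an etale neighborhood $V \to Y$ of $y$ and an auxiliary scheme $U$ etale over both $Z$ (via $\delta_1$) and $P$ (via $\delta_2$) through which the two embeddings agree after pulling back to $V$.

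The central computation exploits functoriality of $\fname{BR}$ along smooth morphisms. Applied to $\pi$ smooth it yields $\fname{BR}(P) = \fname{BR}(Y) \times_Y P$; applied to the etale maps $\delta_i$ it yields $\fname{BR}(Z) \times_Z U = \fname{BR}(U) = \fname{BR}(P) \times_P U$. Chaining these produces a canonical isomorphism $\fname{BR}(Z) \times_Z U \cong \fname{BR}(Y) \times_Y U$ over $U$. Further base change by $V \to U$ and cancellation of fiber products identifies $\tilde{Y} \times_Y V$ with $\fname{BR}(Y) \times_Y V$, realized inside $\fname{BR}(Y) \times_Y P \times_P V$ as the base change of the section $s$.

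From this etale-local description the three properties follow: $\fname{BR}(Y) \times_Y V$ is smooth since $\fname{BR}(Y)$ is smooth and $V \to Y$ is etale; the base change of the section $s$ along a smooth map is again a section of a smooth map, hence a regular embedding; and the projection to $Y$ is an etale base change of the birational map $\fname{BR}(Y) \to Y$. I expect the main obstacle to be the bookkeeping of fiber products needed to verify that the identification $\fname{BR}(Z) \times_Z U \cong \fname{BR}(Y) \times_Y U$ carries the inclusion of $\tilde{Y}$ to the pullback of the section $s$; once this compatibility is pinned down, using $\pi \circ s = \mathrm{id}_Y$ together with the commutativity built into Lemma \ref{lem-common-etale-neighborhood}, the proposition is a formal consequence of the smooth-functoriality of $\fname{BR}$.
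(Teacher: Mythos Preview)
Your proposal is correct and follows essentially the same route as the paper: define $\tilde{Z}=\fname{BR}(Z)$ and $\tilde{Y}=Y\times_Z\tilde{Z}$, then invoke Lemma~\ref{lem-common-etale-neighborhood} at each closed point and run the chain of identifications using smooth-functoriality of $\fname{BR}$ to see that $\tilde{Y}\times_Y V$ agrees with $\fname{BR}(Y)\times_Y V$ (equivalently $\fname{BR}(V)$). The paper records this chain in pullback notation as $(f\varphi)^*\fname{BR}(Z)=\cdots=\fname{BR}(V)$, while you phrase it in fiber-product language, but the content is identical. One minor simplification: your separate verification of (b) via ``base change of a section of a smooth map'' is unnecessary, since once $\tilde{Y}$ and $\tilde{Z}$ are both smooth any closed immersion between them is automatically regular.
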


\begin{proof}
By lemma \ref{lem-common-etale-neighborhood}, for each point $y\in Y$ 
we have $k$-schemes $U$ and $V$ together with the following two diagrams:
\[
\xymatrix{
V \ar[r]^{\beta_1} \ar[dr]_{\varphi}    &   V_1 \ar[r]^{\alpha_1} \ar[d]^{\gamma_1}  &  U \ar[d]^{\delta_1} &  &  
V \ar[r]^{\beta_2}                    &   V_2 \ar[r]^{\alpha_2} \ar[d]^{\gamma_2}  &  U \ar[d]^{\delta_2}   \\
                                   &  Y \ar[r]^f   &   Z   &   &
                                   &  Y \ar[r]^s   &   P \ar@/^/[l]^{\pi}
}
\]

such that all vertical and diagonal arrows are etale, $\alpha_1\beta_1 = \alpha_2\beta_2$, and $V$ is an etale neighborhood of $y$.

Now, since $\pi$ is smooth, by functorial resolution of singularities (theorem \ref{thm-resolution-singularities}) we get that
\[
s^*\fname{BR}(P) = s^*\pi^*\fname{BR}(Y) = \fname{BR}(Y)
\]

By combining this with another application of the resolution of singularities theorem for the vertical etale maps, 
we obtain the following chain of equalities:

\[
\xymatrix @ R=0.3pc {
(f\varphi)^*\fname{BR}(Z) &  =  &  (\delta_1\alpha_1\beta_1)^*\fname{BR}(Z) \\
                &  =  &   (\alpha_1\beta_1)^*\fname{BR}(U) \\
                &  =  &   (\alpha_2\beta_2)^*\fname{BR}(U) \\
                & =  &    (\delta_2\alpha_2\beta_2)^*\fname{BR}(P) \\
                & =  &    (s\gamma_2\beta_2)^*\fname{BR}(P) \\
                & =  &  \fname{BR}(V)
}
\]

Therefore, the schemes $\tilde{Z}=\fname{BR}(Z)$ and $\tilde{Y}=Y\times_Z\tilde{Z}$ satisfy the requirements of the proposition.
To see this, note that by theorem \ref{thm-resolution-singularities}, $\fname{Z}$ is smooth and $\fname{BR}\rightarrow Z$ is proper birational map.
Furthermore, by the above chain of inequalities, we see that $\tilde{Y}$ is etale locally smooth, and that the map $\tilde{Y}\rightarrow Y$ is birational etale locally on the base. It is also proper since proper maps are preserved by base chance.
\end{proof}

Theorem \ref{thm-main-theorem} now follows immediately from this proposition combined with lemma \ref{lem-reduction-is-sufficient}.

\section{Example: Riemann Singularity Theorem}\label{sec:RST-example}

\subsection{Preliminaries}
\subsubsection{The Compactified Picard Scheme}\label{sec-AK-reference}
We'll start by recalling the definition of the compactified Picard scheme in \cite{AK}, together with some axillary notation.
Readers familiar with this paper are encouraged to skip to this section.

The discussion in the \cite{AK} takes place in the general setting of a proper morphism $X\xrightarrow{f}S$, whereas we are primarily concerned with integral curves over a field.
Nevertheless, we'll start by describing the general case since it isn't any more difficult than the curve case, and it will allow us to provide cleaner arguments later on.

Let $X\xrightarrow{f}S$ be a finitely presented morphism of schemes and let $\mathcal{F}$ be a locally finitely presented $\OO_X$-module. 
Altman and Kleiman define a collection of functors associated to such data, which are related to one another by the Abel-Jacobi map.

The first one is the familiar 
\[
\fname{Quot}_{(\mathcal{F}/X/S)}:\catname{Sch}_S\rightarrow\catname{Set}
\]

which maps an $S$-scheme $T$ to the set of locally finitely presented $T$-flat quotients of $\mathcal{F}_T$ whose support is proper and finitely presented over $T$.

In addition, if $\phi$ is a polynomial in $\QQ[x]$, we define 
\[
\fname{Quot}^{\phi}_{(\mathcal{F}/X/S)}\subset\fname{Quot}_{(\mathcal{F}/X/S)}
\]

to be the set of such quotients with Hilbert polynomial $\phi$ on each fiber.

\begin{defn}\cite[2.5]{AK}
Let $X\xrightarrow{f}S$ be a finitely presented morphism of schemes, $\mathcal{F}$ a locally finitely presented $\OO_X$-module, and $\mathcal{G}$ an $S$-flat quotient of $\mathcal{F}$. Define the \emph{pseudo-ideal} $\mathcal{I}(\mathcal{G})$ as the kernel of $\mathcal{F}\rightarrow\mathcal{G}\rightarrow 0$. 
\end{defn}

\begin{rem}
We require $\mathcal{G}$ to be flat so that the formation of the pseudo-ideal commutes with base change.
\end{rem}

Given a line bundle $\mathcal{L}$ on a curve, we have the notion of a linear system which is defined to be the collection of effective divisors $D$ such that $\OO(D)$ is isomorphic to $\mathcal{L}$. 
In \cite[4.1]{AK}, Altman and Kleiman generalize this as follows. Let $X\xrightarrow{f}S$ and $\mathcal{F}$ be as before, and let $\mathcal{I}$ be a finitely presented $\OO_X$-module. We define the functor
\[
\fname{LinSyst}_{(\mathcal{I},\mathcal{F})}\subset\fname{Quot}_{(\mathcal{F}/X/S)}
\]

to be the functor which maps an $S$-scheme $T$ to the set of quotients $\mathcal{G}\in\fname{Quot}_{(\mathcal{F}/X/S)}$ such that there exists a line bundle $\mathcal{N}$ on $T$ and an isomorphism
\[
\mathcal{I}(\mathcal{G})\cong\mathcal{I}\otimes\mathcal{N}
\]

One of the key observations in \cite{AK} is that the linear systems functor is representable by a very natural projective scheme on $S$.
This is precisely analogous to the fact that the linear system of a line bundle on a curve can be described as the projective space associated to the module of the line bundle's global sections.

In fact, the same thing is true in this case when $\mathcal{F}=\OO_X$ and $\mathcal{I}$ is an ideal of $\OO_X$. For arbitrary $\mathcal{F}$ and $\mathcal{I}$, we generalize the notion of global sections via the following combination of lemma and definition.

\begin{lem}(\cite[1.1]{AK})\label{lem-HIF}
Let $X\xrightarrow{f}S$ be a finitely presented morphism of schemes and let $\mathcal{I}$ and $\mathcal{F}$ be two locally finitely presented $\OO_X$-modules with $\mathcal{F}$ flat over $S$.
Then, there exists an locally finitely presented $\OO_X$-module $H(\mathcal{I},\mathcal{F})$ and an element $h(\mathcal{I},\mathcal{F})$ of $\Hom_X(\mathcal{I},\mathcal{F}\otimes H(\mathcal{I},\mathcal{F}))$ which together represent the covariant functor:
\[
\mathcal{M} \mapsto \Hom_X(\mathcal{I},\mathcal{F}\otimes \mathcal{M})
\]

from the category of quasi-coherent $\OO_S$-modules to $\catname{Set}$.
\end{lem}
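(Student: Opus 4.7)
The plan is to prove this standard representability result---an instance of the sheaf-Hom constructions in the style of EGA III---by reducing to the affine case via gluing and then constructing $H(\mathcal{I},\mathcal{F})$ from a finite presentation of $\mathcal{I}$. By Yoneda, a representing object together with its universal element is unique up to canonical isomorphism, so local representing data on $S$ automatically glue; hence we may assume $S=\Spec{A}$ and $X=\Spec{B}$ are affine, with $B$ finitely presented over $A$. Let $I$ and $F$ be the finitely presented $B$-modules corresponding to $\mathcal{I}$ and $\mathcal{F}$, with $F$ flat over $A$.

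Pick a finite presentation $B^m \xrightarrow{\psi} B^n \to I \to 0$. Applying $\Hom_B(-,F\otimes_A M)$ and using that tensor product commutes with finite direct sums gives the natural identification
\[
\Hom_B(I,F\otimes_A M) \cong \ker\bigl(F^n\otimes_A M \xrightarrow{\psi^{\ast}\otimes 1} F^m\otimes_A M\bigr),
\]
where $\psi^{\ast}$ is the $A$-linear map between $F^n$ and $F^m$ induced by $\psi$. The problem thus reduces to representing the covariant functor $M\mapsto \ker(F^n\otimes_A M\to F^m\otimes_A M)$.

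Since $F$ is flat over $A$, the functor $M\mapsto F\otimes_A M$ is exact and in particular preserves kernels, so the functor above is left exact in $M$ and commutes with filtered colimits (because $I$ is finitely presented). The representing module $H$ is then built by dualization: approximating $F$ by finitely generated projective $A$-modules in a filtered colimit via Lazard's theorem, at each finite stage the kernel is represented by a finitely generated $A$-module arising as the cokernel of the transposed map, and these patch into a single finitely presented $H$. The universal element $h$ is the image of $\mathrm{id}_H$ under the resulting natural transformation, and gluing back over affine opens yields the locally finitely presented sheaf $H(\mathcal{I},\mathcal{F})$ with the claimed universal property.

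The main obstacle is that $F$ is finitely presented only as a $B$-module, not as an $A$-module, so the elementary linear-algebra duality that converts kernels into Hom-representing cokernels does not apply directly at the $A$-module level. The Lazard-filtration argument circumvents this by reducing to the projective case, in which the duality is straightforward; alternatively one can work with $\OO_X$-module duals throughout, leveraging finite presentation of $\mathcal{F}$ over $X$ together with flatness over $S$ to realize $H$ directly as a cokernel of locally free $\OO_X$-modules.
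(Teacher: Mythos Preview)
The paper does not prove this lemma; it merely cites it from Altman--Kleiman \cite[1.1]{AK}. So there is no in-paper proof to compare against. Moreover, the statement as transcribed here is slightly garbled: $H(\mathcal{I},\mathcal{F})$ should be an $\OO_S$-module (not $\OO_X$), and the original result in \cite{AK} requires $f$ to be \emph{proper}. Without properness the claim is false: take $S=\Spec{A}$, $X=\Af^1_S$, $\mathcal{I}=\mathcal{F}=\OO_X$; then the functor is $\mathcal{M}\mapsto \mathcal{M}[t]=\bigoplus_{\mathbb{N}}\mathcal{M}$, which is not corepresentable by any $\OO_S$-module.

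Your argument has two genuine gaps. First, the reduction ``we may assume $S$ and $X$ are affine'' is not justified. Uniqueness via Yoneda lets you glue over an affine cover of $S$, but the functor $\mathcal{M}\mapsto\Hom_X(\mathcal{I},\mathcal{F}\otimes f^*\mathcal{M})$ is global in $X$ and does not localize on $X$; when $f$ is proper and $S$ is affine, $X$ is almost never affine. Second, even granting the affine setup, the Lazard step does not work as written: the map $\psi^*\colon F^n\to F^m$ is $B$-linear, whereas the Lazard decomposition $F=\varinjlim F_\alpha$ is only a filtered colimit of $A$-modules, so there is no reason $\psi^*$ restricts to maps $F_\alpha^n\to F_\alpha^m$. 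Without that compatibility you cannot produce a compatible system of $H_\alpha$'s, and in any case the putative $H$ would arise as an inverse limit with no finiteness control.

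The standard proof in \cite{AK} (in the spirit of EGA~III) uses properness in an essential way: one produces a bounded complex $K^\bullet$ of finite locally free $\OO_S$-modules such that $K^\bullet\otimes\mathcal{M}$ computes $Rf_*(\mathcal{H}om(\mathcal{I},\mathcal{F})\otimes f^*\mathcal{M})$ universally, and then $H(\mathcal{I},\mathcal{F})$ is read off as a cokernel of a map between the first two terms. Any correct argument will need some such global finiteness input from properness.
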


\begin{rem}
It's easy to see that $H(\mathcal{I},\mathcal{J})$ is covariant in both variables.
\end{rem}

We can now rigorously state the observation we mentioned above.

\begin{thm}\cite[4.2]{AK}\label{thm-linsyst-representable}
Let $X\xrightarrow{f}S$ be a proper finitely presented morphism of schemes and let $\mathcal{F}$ and $\mathcal{I}$ be two finitely presented $\OO_X$-modules. Assume that $\mathcal{F}$ is $S$-flat and that, for each $S$-scheme $T$ for which $\mathcal{I}_T$ is $T$-flat, the canonical map
\[
\OO^{\times}_T\rightarrow (f_T)_*\fname{Isom}_{X_T}(\mathcal{I}_T,\mathcal{I}_T)
\]
is an isomorphism.

Then, the functor $\fname{LinSyst}_{(\mathcal{I},\mathcal{F})}$ is representable by an open subscheme $U$ of $\PP(H(\mathcal{I},\mathcal{F}))$ such that the inclusion is quasi-compact. 
Moreover, $U$ is isomorphic to $\PP(H(\mathcal{I},\mathcal{F}))$ if and only if, for each geometric point $s$ of $S$, the map $\mathcal{I}(s)\rightarrow\mathcal{F}(s)$ is an injection.
\end{thm}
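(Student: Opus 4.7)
The plan is to construct a natural transformation $\fname{LinSyst}_{(\mathcal{I},\mathcal{F})} \to \PP(H(\mathcal{I},\mathcal{F}))$ from the universal property of $H$, use the $\fname{Isom}$-rigidification hypothesis to show it is a monomorphism, and finally cut out its image by openness conditions.

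First I would build the natural transformation. A $T$-point of $\fname{LinSyst}_{(\mathcal{I},\mathcal{F})}$ is a $T$-flat quotient $\mathcal{G}$ of $\mathcal{F}_T$ together with an implicit line bundle $\mathcal{N}$ on $T$ and an isomorphism $\mathcal{I}(\mathcal{G}) \cong \mathcal{I}_T \otimes \mathcal{N}$. Composing with $\mathcal{I}(\mathcal{G})\hookrightarrow \mathcal{F}_T$ gives an injection $\mathcal{I}_T\otimes\mathcal{N}\hookrightarrow \mathcal{F}_T$, equivalently a morphism $\mathcal{I}_T\to \mathcal{F}_T\otimes \mathcal{N}^{\vee}$, which by lemma \ref{lem-HIF} is classified by an $\OO_T$-linear map $H(\mathcal{I},\mathcal{F})_T\to \mathcal{N}^{\vee}$. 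Since the original injection is nonzero on every geometric fiber, this classifying map is a surjection onto the line bundle $\mathcal{N}^{\vee}$, giving a $T$-valued point of $\PP(H(\mathcal{I},\mathcal{F}))$. The monomorphism property then follows from the $\fname{Isom}$ hypothesis: two trivializations of the same pseudo-ideal $\mathcal{I}(\mathcal{G})$ differ by an isomorphism $\mathcal{I}_T\otimes\mathcal{N}_1\cong \mathcal{I}_T\otimes\mathcal{N}_2$, which after a local identification $\mathcal{N}_1\cong \mathcal{N}_2$ becomes an automorphism of $\mathcal{I}_T$ and hence multiplication by a unit; this is precisely the ambiguity that $\PP$ collapses.

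Next I would characterize the image. Reversing the construction, a $T$-point of $\PP(H(\mathcal{I},\mathcal{F}))$ is a morphism $\psi:\mathcal{I}_T\otimes\mathcal{N}\to \mathcal{F}_T$ (canonical up to units), and it lies in the linear system iff $\psi$ is injective and the cokernel $\mathcal{G}:=\operatorname{coker}\psi$ is $T$-flat, the properness and finite presentation of support being automatic from the hypotheses on $f$ and $\mathcal{F}$. Flatness of $\mathcal{G}$ is the standard open condition on $T$, and once flatness holds the locus where $\psi$ is fiberwise injective is again open. Intersecting these loci inside $\PP(H(\mathcal{I},\mathcal{F}))$ produces the desired representing open subscheme $U$, and quasi-compactness of the inclusion follows from finite presentation throughout. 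For the final assertion, $U=\PP(H(\mathcal{I},\mathcal{F}))$ precisely when both open conditions are vacuous, which by the fiberwise criteria reduces exactly to $\mathcal{I}(s)\to \mathcal{F}(s)$ being injective at every geometric point $s$ of $S$.

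The main obstacle I foresee is in the third step: verifying that flatness of the cokernel and fiberwise injectivity of $\psi$ are genuinely open (rather than merely constructible) conditions on $\PP(H(\mathcal{I},\mathcal{F}))$ in the non-Noetherian finitely presented setting. This requires a careful application of the EGA flatness-locus theorem together with some bookkeeping to pass between Zariski-local trivializations of $\mathcal{N}$ and globally defined objects; assembling the local trivializations into a single well-defined subfunctor of $\PP(H(\mathcal{I},\mathcal{F}))$ is the core technical content of the theorem.
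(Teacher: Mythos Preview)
Your proposal is correct and follows exactly the approach the paper indicates. Note that the paper does not actually prove this theorem: it is quoted from \cite[4.2]{AK}, and the paper merely remarks that ``the proof of this theorem is purely formal, and involves writing down the natural functor represented by $\PP(H(\mathcal{I},\mathcal{F}))$ and unraveling the definition of $H(\mathcal{I},\mathcal{F})$,'' which is precisely the strategy you outline.
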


The proof of this theorem is purely formal, and involves writing down the natural functor represented $\PP(H(\mathcal{I},\mathcal{F}))$ and unraveling the definition of $H(\mathcal{I},\mathcal{F})$.

In light of this role played by $H(\mathcal{I},\mathcal{F})$, the following theorem is crucial.

\begin{thm}\cite[1.3]{AK}\label{thm-free-HIJ}
Let $X\xrightarrow{f}S$ be a finitely presented, proper morphism of schemes, and let $\mathcal{I}$ and $\mathcal{F}$ be locally finitely presented $S$-flat $\OO_X$-modules. Assume,
\[
\Ext^1_{X(s)}(\mathcal{I}(s),\mathcal{F}(s)) = 0
\]
for some point $s$ in $S$.
Then there exists an open neighborhood $U$ of $s$ such that $H(\mathcal{I},\mathcal{F})|_U$ is locally free with finite rank. 
\end{thm}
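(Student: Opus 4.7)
The plan is to imitate Grothendieck's cohomology-and-base-change argument (as in Hartshorne III.12 or Mumford's \emph{Abelian Varieties}), but replacing the higher direct images $R^if_*$ by the functors $\mathcal{M}\mapsto \Ext^i_X(\mathcal{I},\mathcal{F}\otimes\mathcal{M})$, whose zeroth version is corepresented by $H(\mathcal{I},\mathcal{F})$ per lemma~\ref{lem-HIF}. The essence is that fiberwise vanishing in degree $1$ forces surjectivity of a differential between two finite rank locally free sheaves on $S$, and the kernel of that surjection will be (a dual of) $H(\mathcal{I},\mathcal{F})$.

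Concretely, I would first produce, locally on $S$ near $s$, a two-term complex $\mathcal{E}^0\xrightarrow{d}\mathcal{E}^1$ of finite rank locally free $\OO_S$-modules satisfying, for every quasi-coherent $\OO_S$-module $\mathcal{M}$ and functorially in $\mathcal{M}$,
\[
\Hom_X(\mathcal{I},\mathcal{F}\otimes\mathcal{M})=\ker(d\otimes\mathcal{M}),\qquad \Ext^1_X(\mathcal{I},\mathcal{F}\otimes\mathcal{M})=\mathrm{coker}(d\otimes\mathcal{M}).
\]
Such a complex is built from a local finite presentation of $\mathcal{I}$ by locally free $\OO_X$-modules together with the properness of $f$ and the $S$-flatness of $\mathcal{F}$; this is the same mechanism that underlies lemma~\ref{lem-HIF}. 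Taking $\mathcal{M}=k(s)$, the hypothesis gives $\mathrm{coker}(d\otimes k(s))=\Ext^1_{X(s)}(\mathcal{I}(s),\mathcal{F}(s))=0$, so $d\otimes k(s)$ is surjective. Nakayama's lemma applied to the coherent sheaf $\mathrm{coker}(d)$ then upgrades this to surjectivity of $d$ on some Zariski open neighborhood $U$ of $s$.

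On $U$, the sequence $0\to\ker d\to \mathcal{E}^0\xrightarrow{d}\mathcal{E}^1\to 0$ is a short exact sequence of finite rank locally free sheaves, hence splits locally; so $\ker d$ is locally free of finite rank and $\ker(d\otimes\mathcal{M})=(\ker d)\otimes\mathcal{M}$ for every $\mathcal{M}$. Dualizing, this identifies the functor $\mathcal{M}\mapsto\Hom_X(\mathcal{I},\mathcal{F}\otimes\mathcal{M})$ on $U$ with $\mathcal{M}\mapsto\Hom_{\OO_U}((\ker d)^\vee,\mathcal{M})$, and by Yoneda $H(\mathcal{I},\mathcal{F})|_U\cong(\ker d)^\vee$, which is locally free of finite rank. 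The main obstacle is the first step: constructing a two-term locally free model for both $\Hom$ and $\Ext^1$ that is compatible with arbitrary base change. Once it is in hand, the rest is a formality via semicontinuity, Nakayama, and the tensor-hom adjunction.
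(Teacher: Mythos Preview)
The paper does not prove this theorem; it is quoted verbatim from Altman--Kleiman \cite[1.3]{AK} as background in the preliminaries, with no argument supplied. So there is no in-paper proof to compare against.

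Your sketch is essentially the Altman--Kleiman argument and is correct. Two small points worth tightening. First, when you set $\mathcal{M}=k(s)$ you silently use $\Ext^1_X(\mathcal{I},\mathcal{F}\otimes k(s))\cong \Ext^1_{X(s)}(\mathcal{I}(s),\mathcal{F}(s))$; this base-change identification is exactly what the $S$-flatness hypotheses on $\mathcal{I}$ and $\mathcal{F}$ buy you, but it should be stated. Second, your proposed construction of $\mathcal{E}^0\to\mathcal{E}^1$ from ``a local finite presentation of $\mathcal{I}$ by locally free $\OO_X$-modules'' is a bit optimistic in this generality: a two-step presentation of $\mathcal{I}$ on $X$ does not by itself yield a two-term \emph{locally free} complex on $S$ after applying $\Hom(-,\mathcal{F})$ and $f_*$. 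The cleaner route, and the one Altman--Kleiman take, is to start from the fact (lemma~\ref{lem-HIF}) that $H(\mathcal{I},\mathcal{F})$ is already locally finitely presented, choose a local free presentation $Q^1\to Q^0\to H(\mathcal{I},\mathcal{F})\to 0$ on $S$, and dualize to obtain $\mathcal{E}^0\to\mathcal{E}^1$ with the required kernel; the cokernel is then identified with the relevant $\Ext^1$ by a separate base-change lemma. Once that complex is in hand, your Nakayama/splitting/Yoneda conclusion is exactly right.
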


This tells us that in good conditions, the linear systems functor is representable by a projective bundle.

In order to define the Abel Jacobi map, we'll need to be more selective in the types of quotients that we allow.
This is done by requiring that the pseudo-ideal of our quotients satisfy the following condition.

\begin{defn}\cite[5.1]{AK}
Let $X\xrightarrow{f}S$ be a morphism of schemes and let $\mathcal{I}$ be an $\OO_X$-module. Then $\mathcal{I}$ will be called \emph{simple over} $S$ or $S$-\emph{simple} if $\mathcal{I}$ is locally finitely presented and flat over $S$ and if for every $S$ scheme $T$, the canonical map
\[
\OO_T\rightarrow (f_{T})_*:\fname{Hom}_{X_T}(\mathcal{I}_T,\mathcal{I}_T)
\]
is an isomorphism.
\end{defn}

The following stronger property will also play a central role.
 
\begin{defn}\cite[3.1]{AK}
Let $X$ be a geometrically integral scheme over a field $k$ and let $\mathcal{I}$ be a coherent $\OO_X$ module.
We call $\mathcal{I}$ a \emph{rank-1, torsion-free sheaf} if it is torsion free and generically isomorphic to $\OO_X$.
\end{defn}

There is also the relative version.

\begin{defn}\cite[5.10]{AK}
Let $X\xrightarrow{f}S$ be a finitely presented proper morphism with integral geometric fibers. 
We call an $\OO_X$-module $\mathcal{I}$ \emph{relatively rank-1, torsion-free} over $S$ if it is locally finitely presented and flat over $S$ and if for each geometric point $s$ in $S$, the fiber $\mathcal{I}(s)$ is a rank-1, torsion-free $\OO_{X_s}$-module.
\end{defn}

We can now define the functor which is the target of the Abel-Jacobi map.

\begin{defn}\cite[5.5]{AK}
Let $X\xrightarrow{f}S$ be a morphism of schemes. We define the functor
\[
\fname{Spl}_{(X/S)}:\catname{Sch}_S\rightarrow\catname{Set}
\]
to be the functor which assigns to each $S$-scheme $T$, the set of equivalence classes of simple $\OO_{X_T}$-modules, where two modules $\mathcal{I}$ and $\mathcal{J}$ are defined to be equivalent if there exists a line bundle $\mathcal{N}$ on $T$ such that
\[
\mathcal{I}\otimes N \cong \mathcal{J}
\]

Similarly, we define the \emph{compactified Picard functor} to be the subfunctor 
\[
\overline{\fname{Pic}}_{(X/S)} \subset \fname{Spl}_{(X/S)}
\]
which maps an $S$-scheme $T$ to the classes in $\fname{Spl}_{(X/S)}(T)$ which are represented by a relatively rank-1 torsion-free $\OO_{X_T}$-module.
\end{defn}

As usual, we define $\fname{Spl}_{(X/S)(\textrm{\'{e}t})}$ and $\overline{\fname{Pic}}_{(X/S)(\textrm{\'{e}t})}$ to be the associated sheaves in the \'{e}tale topology. However, this distinction will not play a role in our application.
Just like with the $\fname{Quot}$ functor, after fixing a very ample line bundle on $X$, for a given polynomial $\phi$ we denote by $\fname{Spl}^{\phi}_{(X/S)(\textrm{\'{e}t})}$ and $\overline{\fname{Pic}}^{\phi}_{(X/S)(\textrm{\'{e}t})}$ the subfunctors defined by the additional condition that the $\OO_{X_T}$ module $\mathcal{I}$ have a Hilbert polynomial $\phi$.
 
Similarly, the source of the Abel-Jacobi map is defined as follow.

\begin{defn}\cite[5.14]{AK}
Let $X\xrightarrow{f}S$ be a finitely presented proper morphism of schemes and let $\mathcal{F}$ be a locally finitely presented $\OO_X$-module.
We define the functor
\[
\fname{Smp}_{(\mathcal{F}/X/S)} \subset \fname{Quot}_{(\mathcal{F}/X/S)}
\]

to be the functor which assigns to each $S$-scheme $T$, the quotients $\mathcal{G}\in\fname{Quot}_{(\mathcal{F}/X/S)}$ whose pseudo-ideals $\mathcal{I}(\mathcal{G})$ are simple over $S$.
\end{defn}

Finally, we can define the Abel-Jacobi map.

\begin{defn}\cite[5.16]{AK}
Let $X\xrightarrow{f}S$ be a proper, finitely presented morphism of schemes, and let $\mathcal{F}$ be a locally finitely presented $\mathcal{O}_X$-module. We define the \emph{Abel-Jacobi map associated to} $\mathcal{F}$ to be the map of functors:
\[
\mathcal{A}_{(\mathcal{F}/X/S)}:\fname{Smp}_{(\mathcal{F}/X/S)}\rightarrow \fname{Spl}_{(X/S)(\textrm{\'{e}t})}
\]

which takes a quotient $\mathcal{G}$ of $\mathcal{F}$ to the equivalence class of its pseudo-ideal $\mathcal{I}(\mathcal{G})$.
\end{defn}

Note that when $X$ is a smooth curve over a field and $\mathcal{F}=\OO_X$, this is the standard Abel-Jacobi map of a curve. 
As in that case, the fibers of the Abel-Jacobi map are naturally linear systems in the following sense.

\begin{lem}\cite[5.17]{AK}\label{lem-AJ-fibers}
Let $X\xrightarrow{f}S$ be a proper, finitely presented morphism of schemes with integral geometric fibers, and let $\mathcal{F}$ be an $S$-flat, locally finitely presented $\OO_X$-module. Let $T$ be an $S$-scheme and let $\mathcal{I}$ be a $T$-simple $\OO_{X_T}$-module. 
Furthermore, suppose that the geometric fibers of $f$ are integral and that $\mathcal{I}$ and $\mathcal{F}$ are relatively rank-1 torsion-free.
Then we have the following commutative diagram with a right Cartesian square:
\[
\xymatrix{
\PP(H(\mathcal{I},\mathcal{F}_T)) \ar[dr] \ar[r]^{\cong}   &   \fname{LinSyst}_{(\mathcal{I},\mathcal{F}_T)} \ar[d] \ar[r]  &  \fname{Smp}_{(\mathcal{F}/X/S)} \ar[d]^{\mathcal{A}_{\mathcal{F}}}  \\
                                                         &   T  \ar[r]^{\tau_{\mathcal{I}}}                      &  \fname{Spl}_{(X/S)(\textrm{\'{e}t})}
}
\]
where $\PP(H(\mathcal{I},\mathcal{F}_T))$ and $T$ stand for the respective functors of points and $\tau_{\mathcal{I}}$ is the map of functors taking a $T$-scheme $Y$ to the $Y$-simple ideal $\mathcal{I}_Y$ on $Y\times_SX$. 

In particular, for every geometric point $t$ of $ \fname{Spl}_{(X/S)(\textrm{\'{e}t})}$, if $\mathcal{I}$ is a representing $\OO_{X_t}$-module then
\[
\dim(\mathcal{A}^{-1}_{\mathcal{F}}) = \dim_{k(t)}(\Hom_{X(t)}(H(\mathcal{I},\mathcal{F}), k(t))) -1 = \dim_{k(t)}(\Hom_{X(t)}(\mathcal{I},\mathcal{F})) -1
\]
\end{lem}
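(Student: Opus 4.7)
The plan is to verify the right-hand square at the level of functor points, then invoke Theorem \ref{thm-linsyst-representable} to identify the leftmost term with $\PP(H(\mathcal{I},\mathcal{F}_T))$, and finally read off the dimension from the universal property of $H(\mathcal{I},\mathcal{F})$ recorded in Lemma \ref{lem-HIF}.

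First I would establish that the right-hand square is Cartesian by a direct comparison of $T'$-valued points for $T$-schemes $T'$. A point of the fiber product is a quotient $\mathcal{G}\in\fname{Smp}_{(\mathcal{F}/X/S)}(T')$ together with the condition that its image $\mathcal{A}_{\mathcal{F}}(\mathcal{G})\in\fname{Spl}_{(X/S)(\text{\'et})}(T')$ agrees with $\tau_{\mathcal{I}}(T')=[\mathcal{I}_{T'}]$. Unwinding the equivalence classes defining $\fname{Spl}_{\text{\'et}}$, this is exactly the statement that \'etale locally on $T'$ there exist a line bundle $\mathcal{N}$ and an isomorphism $\mathcal{I}(\mathcal{G})\cong\mathcal{I}_{T'}\otimes\mathcal{N}$, which is the defining data of a $T'$-point of $\fname{LinSyst}_{(\mathcal{I},\mathcal{F}_T)}$.

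Next, to identify $\fname{LinSyst}_{(\mathcal{I},\mathcal{F}_T)}$ with all of $\PP(H(\mathcal{I},\mathcal{F}_T))$, I would apply Theorem \ref{thm-linsyst-representable} to $X_T\to T$ and check both of its hypotheses. The isomorphism $\OO_{T'}^{\times}\xrightarrow{\sim}(f_{T'})_*\fname{Isom}(\mathcal{I}_{T'},\mathcal{I}_{T'})$ for every $T$-scheme $T'$ over which $\mathcal{I}_{T'}$ is flat follows by combining the $T$-simplicity of $\mathcal{I}$ with the observation that, on each integral geometric fiber, a nonzero endomorphism of a rank-1 torsion-free sheaf is determined by its scalar action at the generic point and hence is an automorphism; so units in $(f_{T'})_*\fname{Hom}(\mathcal{I}_{T'},\mathcal{I}_{T'})\cong\OO_{T'}$ correspond exactly to isomorphisms. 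The fiberwise injectivity of $\mathcal{I}(s)\to\mathcal{F}(s)$ is automatic, since any nonzero map between rank-1 torsion-free sheaves on an integral scheme is injective, and the universal map parametrized by $\fname{LinSyst}$ is tautologically nonzero.

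For the dimension formula, I would specialize the established Cartesian square to a geometric point $t$ of $\fname{Spl}_{(X/S)(\text{\'et})}$ represented by a rank-1 torsion-free $\OO_{X(t)}$-module $\mathcal{I}$. The fiber of $\mathcal{A}_{\mathcal{F}}$ then becomes $\PP\bigl(H(\mathcal{I},\mathcal{F})\otimes k(t)\bigr)$, of dimension $\dim_{k(t)}\Hom_{k(t)}\bigl(H(\mathcal{I},\mathcal{F}),k(t)\bigr)-1$, and applying the universal property of $H$ from Lemma \ref{lem-HIF} with $\mathcal{M}=k(t)$ identifies this $\Hom$ canonically with $\Hom_{X(t)}(\mathcal{I},\mathcal{F})$. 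The step I expect to be the main obstacle is the first: matching the \'etale-sheafified equivalence classes in $\fname{Spl}_{(X/S)(\text{\'et})}$ against the line bundle twists built into $\fname{LinSyst}$, since one must carefully track whether the auxiliary line bundle $\mathcal{N}$ descends from an \'etale cover of $T'$ and use the simplicity hypothesis to rigidify it up to the correct equivalence without collapsing any distinctions needed to identify the two functors on the nose.
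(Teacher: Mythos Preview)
Your proposal is correct and follows the same route the paper indicates: the paper's entire proof is the sentence ``This follows almost immediately from theorem \ref{thm-linsyst-representable},'' and your three steps (functor-of-points check of the Cartesian square, application of Theorem \ref{thm-linsyst-representable} to identify the fiber with $\PP(H(\mathcal{I},\mathcal{F}_T))$, and reading off the dimension via Lemma \ref{lem-HIF}) are exactly what that sentence unpacks to. The \'etale-sheafification wrinkle you flag is real but is handled in \cite[5.17]{AK} itself, which the paper is simply citing.
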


This follows almost immediately from theorem \ref{thm-linsyst-representable}.

The following theorem is a fairly straightforward corollary as well.

\begin{thm}\cite[5.20-5.21]{AK}\label{thm-AJ-general-properties}
Let $X\xrightarrow{f}S$ be a finitely presented, proper morphism of schemes, whose geometric fibers are integral. Let $\mathcal{F}$ be a relatively rank-1 torsion-free $\OO_X$-module. Suppose that $P$ represents $\overline{\fname{Pic}}_{(X/S)(\textrm{\'{e}t})}$. Then:
\begin{enumerate}
\item The restriction of $\mathcal{A}_{\mathcal{F}}|_P$ is proper and finitely presented.
\item Assume that $f$ is projective and that the fibers $X(s)$ (resp. $\mathcal{F}(s)$) all have the same Hilbert polynomial $\phi$ (resp. $\psi$).
Then for each polynomial $\theta$, the restriction $\mathcal{A}|_{P^{\theta}}$ is strongly projective. This means that it factors as a closed embedding into a projective bundle over $P^{\theta}$, followed by the projection.
\item Assume there exists a universal $\OO_{X_P}$-module $\mathcal{I}$ such that $(P,\mathcal{I})$ represents $\overline{\fname{Pic}}_{(X/S)(\textrm{\'{e}t})}$. Then $\mathcal{A}_{\mathcal{F}}|_P$ is equal to the structure map of $\PP(H(\mathcal{I},\mathcal{F}_P))$ over $P$. Furthermore, this condition holds if the smooth locus of $X/S$ admits a section.
\end{enumerate}
\end{thm}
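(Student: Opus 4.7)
\bigskip

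\textbf{Proof plan.}
My plan is to treat (3) first, since it is the structural statement that feeds both (1) and (2). Assume $P$ carries a universal sheaf $\mathcal{I}$ on $X_P$. Then the identity $P\to P$ is itself a classifying map in the sense of lemma \ref{lem-AJ-fibers}, so pulling back the Abel--Jacobi map $\mathcal{A}_{\mathcal{F}}$ along $\tau_{\mathcal{I}}=\mathrm{id}_P$ yields the Cartesian square identifying $\mathcal{A}_{\mathcal{F}}|_P$ with $\fname{LinSyst}_{(\mathcal{I},\mathcal{F}_P)}\to P$. I would then invoke theorem \ref{thm-linsyst-representable} to represent the latter as the projection $\PP(H(\mathcal{I},\mathcal{F}_P))\to P$. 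The hypotheses of that theorem are met for free in our setup: the relative simplicity of $\mathcal{I}$ is exactly the statement that $\OO_T^{\times}\to (f_T)_{*}\fname{Isom}_{X_T}(\mathcal{I}_T,\mathcal{I}_T)$ is an isomorphism, and the rank-$1$ torsion-free condition on $\mathcal{I}$ and $\mathcal{F}$ makes the fiberwise map $\mathcal{I}(s)\to\mathcal{F}(s)$ injective, which upgrades the open $U$ in theorem \ref{thm-linsyst-representable} to the full projective bundle. For the final sentence of (3), a section of the smooth locus of $X/S$ provides a rigidification of the Picard functor: twisting any tautological class along the section trivializes the line-bundle ambiguity in the definition of $\overline{\fname{Pic}}_{(X/S)(\textrm{\'{e}t})}$, producing an honest universal object $\mathcal{I}$ on $X_P$.

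For (1), a universal sheaf exists \'{e}tale locally on $P$, essentially by construction of $\overline{\fname{Pic}}_{(X/S)(\textrm{\'{e}t})}$ as an \'{e}tale sheafification. Over any such \'{e}tale neighborhood $P'\to P$, part (3) identifies $\mathcal{A}_{\mathcal{F}}|_{P'}$ with the projection of $\PP(H(\mathcal{I}',\mathcal{F}_{P'}))$, and theorem \ref{thm-free-HIJ} (whose $\Ext^{1}$ vanishing holds on curves or more generally on the locus where the Hilbert polynomials are controlled) makes $H(\mathcal{I}',\mathcal{F}_{P'})$ locally free of finite rank, so this projection is proper and finitely presented. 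Both properties descend along \'{e}tale covers of the target, so $\mathcal{A}_{\mathcal{F}}|_{P}$ itself is proper and finitely presented.

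For (2), the key observation is that replacing $\mathcal{I}$ by $\mathcal{I}\otimes\mathcal{N}$ replaces $H(\mathcal{I},\mathcal{F})$ by $H(\mathcal{I},\mathcal{F})\otimes\mathcal{N}^{-1}$, so $\PP(H(\mathcal{I},\mathcal{F}))$ is canonically independent of the chosen representative. This means that even without a global universal sheaf the projective bundles glue, via \'{e}tale descent, to a genuine projective bundle $\mathbb{P}\to P^{\theta}$; the constancy of the fiberwise rank needed for local freeness is guaranteed by the hypothesis that the Hilbert polynomials $\phi$ and $\psi$ are fixed, which pins down the Euler characteristic $\chi(\mathcal{H}om(\mathcal{I}(s),\mathcal{F}(s)))$. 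The Abel--Jacobi map then factors through $\mathbb{P}\to P^{\theta}$, and part (3) says that \'{e}tale locally this factorization is an isomorphism onto $\mathbb{P}$, hence in particular a closed embedding; descent upgrades this to a global closed embedding, exhibiting $\mathcal{A}|_{P^\theta}$ as strongly projective.

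The main obstacle I expect is the \'{e}tale-descent bookkeeping in (2): one must check that the projective bundles $\PP(H(\mathcal{I}',\mathcal{F}_{P'}))$ over an \'{e}tale cover of $P^{\theta}$ really do glue, and that the resulting factorization $\mathcal{A}|_{P^{\theta}}\to\mathbb{P}$ is a morphism of schemes over $P^{\theta}$ rather than merely a morphism of \'{e}tale sheaves. Once this descent is justified using the canonical identification $\PP(H(\mathcal{I}\otimes\mathcal{N},\mathcal{F}))\cong\PP(H(\mathcal{I},\mathcal{F}))$, the rest of the argument is formal manipulation of the Cartesian square in lemma \ref{lem-AJ-fibers}.
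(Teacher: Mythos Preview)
The paper does not supply its own proof of this theorem: it is quoted from \cite[5.20--5.21]{AK}, and the only remark offered is that it is ``a fairly straightforward corollary'' of theorem \ref{thm-linsyst-representable}. Your plan for part (3) is exactly that corollary argument---apply lemma \ref{lem-AJ-fibers} with $T=P$ and $\tau_{\mathcal{I}}=\mathrm{id}_P$, then invoke theorem \ref{thm-linsyst-representable}---and it is correct. The rigidification argument for the last sentence of (3) is also standard and fine.

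There are, however, two genuine slips in your treatment of (1) and (2). In (1) you appeal to theorem \ref{thm-free-HIJ} to make $H(\mathcal{I}',\mathcal{F}_{P'})$ locally free, justifying this by an $\Ext^1$ vanishing that you assert ``holds on curves or more generally on the locus where the Hilbert polynomials are controlled.'' Neither claim is true in general: $\Ext^1_{X(s)}(\mathcal{I}(s),\mathcal{F}(s))$ certainly need not vanish on a curve (this is exactly the interesting range of the Abel--Jacobi map), and fixing Hilbert polynomials does nothing to force it. Fortunately the appeal is also unnecessary: lemma \ref{lem-HIF} already gives $H(\mathcal{I}',\mathcal{F}_{P'})$ locally finitely presented, and $\PP$ of any such sheaf is proper and finitely presented over the base. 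So (1) goes through once you drop the spurious local-freeness step.

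The same confusion bites harder in (2). You argue that fixing $\phi,\psi,\theta$ ``pins down the Euler characteristic $\chi(\mathcal{H}om(\mathcal{I}(s),\mathcal{F}(s)))$,'' and infer constant fiberwise rank, hence local freeness of $H$, hence a genuine projective bundle. But constant Euler characteristic only gives constant $h^0-h^1$, not constant $h^0$; without $\Ext^1$ vanishing, $H$ need not be locally free and the glued object $\mathbb{P}$ need not be a projective \emph{bundle}. The actual argument in \cite{AK} for strong projectivity does not go through local freeness of $H$ itself; one instead produces a surjection onto $H$ from a sheaf that \emph{is} locally free (e.g.\ by twisting or by the standard Quot-into-Grassmannian embedding available once $f$ is projective), and it is the $\PP$ of that larger sheaf that serves as the ambient projective bundle. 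Your descent idea for gluing the $\PP(H)$'s is correct and is part of the story, but it does not by itself deliver the ``projective bundle'' required by the definition of strongly projective.
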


Note that condition in part 3 of the theorem implies in particular that $\overline{\fname{Pic}}_{(X/S)}$ is already an \'{e}tale sheaf and so $\overline{\fname{Pic}}_{(X/S)} = \overline{\fname{Pic}}_{(X/S)(\textrm{\'{e}t})}$

From the discussion up to this point we can see that if we forget the gnarly issue of representability, the structure of the Abel-Jacobi map as a natural transformation of functors is fairly transparent and requires minimal machinery.
Furthermore, nothing up to this point depended on special facts about curves.
Since our current application will only require access to this functorial description, using this setup will allow us to obtain cleaner statements and proofs.

We now narrow our focus to the case where $X\xrightarrow{f}S$ is a family of integral projective curves. 
In other words, $f$ is a flat, locally projective, finitely presented morphism of schemes whose geometric fibers are integral curves.

The advantage of this is that Riemann Roch now allows us to satisfy the conditions of theorem \ref{thm-free-HIJ} fairly easily.
Combined with theorem \ref{thm-AJ-general-properties}, this will force the Abel Jacobi map to be the structure map of a projective bundle in sufficiently general situations. 
Another benefit is that we now have the following representability result.

\begin{thm}\cite[8.1]{AK}
Let $X\xrightarrow{f}S$ be a locally projective, finitely presented, flat morphism of schemes whose geometric fibers are integral curves. Then, 
$\overline{\fname{Pic}}_{(X/S)(\textrm{\'{e}t})}$ is represented by a disjoint union of $S$-schemes $\coprod P_n$ where $P_n = \overline{\Pic}^n_{(X/S)(\textrm{\'{e}t})}$ and $P_n$ parametrizes the rank-1 torsion-free sheaves with Euler characteristic $n$ on the fibers of $X/S$.
\end{thm}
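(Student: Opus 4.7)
The plan is to construct each $P_n$ explicitly as a quotient of a Quot scheme, and then show that the disjoint union $\coprod P_n$ represents the full compactified Picard functor. First I would verify that the decomposition by Euler characteristic makes sense: since $f$ is flat and proper and each fiber $X(s)$ is an integral curve, the Euler characteristic of any relatively rank-$1$ torsion-free $\mathcal{O}_{X_T}$-module $\mathcal{I}$ is locally constant on $T$, and it is invariant under twisting by pullbacks of line bundles from $T$. So $\overline{\fname{Pic}}_{(X/S)(\textrm{\'{e}t})}$ naturally splits into a disjoint union of subfunctors $\overline{\fname{Pic}}^n_{(X/S)(\textrm{\'{e}t})}$ parametrizing classes with fiberwise Euler characteristic $n$, and it suffices to represent each one.

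Next I would fix a relatively very ample line bundle $\mathcal{O}_X(1)$ and use boundedness of rank-$1$ torsion-free sheaves with fixed Euler characteristic on integral curves: for $m \gg 0$ depending only on $n$, any such $\mathcal{I}(s)$ satisfies $H^1(X(s), \mathcal{I}(s)(m)) = 0$ and $\mathcal{I}(s)(m)$ is generated by its global sections, which have a fixed dimension $N = n + m\cdot \deg \mathcal{O}_X(1) + 1 - p_a$. Working étale locally on $S$, every class in $\overline{\fname{Pic}}^n_{(X/S)(\textrm{\'{e}t})}(T)$ can then be represented, after twisting by a suitable line bundle on $T$, by a quotient $\mathcal{O}_{X_T}^N \twoheadrightarrow \mathcal{I}(m)$. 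This produces a map from a locally closed subscheme $Q \subset \fname{Quot}^{\psi}_{(\mathcal{O}^N_X/X/S)}$ (cut out by the open condition that the kernel be relatively rank-$1$ torsion-free) to $\overline{\fname{Pic}}^n_{(X/S)(\textrm{\'{e}t})}$.

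I would then form $P_n$ as the étale sheaf quotient of $Q$ by the natural action of $\mathrm{GL}_N$ twisted by the line bundle ambiguity. Two points of $Q$ give equivalent classes precisely when the quotients differ by an automorphism of $\mathcal{O}_{X_T}^N$ composed with multiplication by a line bundle pulled back from $T$; the simpleness condition from the definition of $\overline{\fname{Pic}}$ (which holds automatically for rank-$1$ torsion-free sheaves on integral curves) guarantees that the stabilizers are exactly the scalar $\mathbb{G}_m$ acting by these line bundle twists. So the stack-theoretic quotient $[Q/\mathrm{GL}_N]$ becomes a $\mathbb{G}_m$-gerbe over the desired scheme, and the étale sheafification kills the gerbe. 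Representability of the quotient as a scheme follows either from GIT applied to a suitable linearization, or more directly from Artin's representability criterion combined with the properness and finite presentation of the linear-systems fibers established in Theorem \ref{thm-AJ-general-properties}.

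The main obstacle is the quotient step: a priori $\overline{\fname{Pic}}^n$ is only an étale sheaf rather than a Zariski sheaf because the twisting line bundle need not exist Zariski-locally, and one must check that the resulting algebraic space is in fact a scheme. This is where the hypothesis that the geometric fibers are integral curves is essential, since it forces simpleness of rank-$1$ torsion-free sheaves and makes the linear-systems fibration from Lemma \ref{lem-AJ-fibers} into an honest projective bundle étale-locally, which in turn lets one produce enough global sections of an ample line bundle on $P_n$ to embed it projectively over $S$.
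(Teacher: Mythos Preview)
This theorem is not proved in the present paper at all: it is simply quoted from Altman--Kleiman \cite[8.1]{AK} as background in the preliminaries section, with no argument supplied. So there is no ``paper's own proof'' to compare your proposal against.

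That said, your sketch is a plausible modern outline (boundedness, Quot-scheme parametrization, quotient by $\mathrm{GL}_N$), but it is not the route Altman and Kleiman take in \cite{AK}. Their argument avoids GIT and Artin's criterion entirely: they first show that for $d$ large the Abel map $\mathcal{A}^d_\omega : \Quot^d_{(\omega/X/S)} \to \overline{\fname{Pic}}^{p-1-d}_{(X/S)(\textrm{\'et})}$ is smooth and surjective (this is essentially Theorem~\ref{thm-AJ-curve-properties} here), and then use the already-known projective representability of $\Quot^d$ together with a descent argument along this smooth cover to produce $P_{p-1-d}$ as a scheme. The remaining $P_n$ are obtained by twisting. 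Your approach would work in principle, but the step you flag as the ``main obstacle'' --- verifying that the quotient is a scheme rather than merely an algebraic space --- is genuinely nontrivial and is exactly what the Altman--Kleiman strategy sidesteps by exhibiting an explicit smooth projective source.
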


Furthermore, in this case, if $\mathcal{F}$ is a locally finitely presented $S$-flat $\OO_X$-module, then we have \cite[8.2.1]{AK}:

\[
\fname{Smp}_{(\mathcal{F}/X/S)} = \fname{Quot}_{(\mathcal{F}/X/S)}
\]
 
Therefore, in this case the restriction of the Abel-Jacobi map to the compactified Picard scheme takes the following more familiar form:
\[
\mathcal{A}_{\mathcal{F}}:\Quot_{(\mathcal{F}/X/S)} \rightarrow P = \overline{\Pic}_{(X/S)(\textrm{\'{e}t})}
\]
Where $\Quot_{(\mathcal{F}/X/S)}$ is the scheme representing the functor $\fname{Quot}_{(\mathcal{F}/X/S)}$.

In addition, if $\chi(\mathcal{F}(s))$ is independent of the point $s$ in $S$, the map decomposes as

\[
\mathcal{A}^d_{\mathcal{F}}:\Quot^d_{(\mathcal{F}/X/S)} \rightarrow P_n
\]
where $n = \chi(\mathcal{F}(s)) - d$. For example, this happens if $\mathcal{F}$ is the relative dualizing sheaf $\omega$ of $X/S$ and the fibers of $f$ have the same arithmetic genus.

The final result that we will need now follows fairly easily from Riemann-Roch combined with theorems \ref{thm-free-HIJ} and \ref{thm-AJ-general-properties} as we mentioned earlier.

\begin{thm}\cite[8.4]{AK}\label{thm-AJ-curve-properties}
Let $X\xrightarrow{f}S$ be a flat, finitely presented, locally projective morphism whose geometric fibers are integral curves with the same geometric genus $p$. Let $\omega$ denote the relative dualizing sheaf. Fix an integer $d\geq 2p-1$. Then the $d$-th part of the Abel-Jacobi map
\[
\mathcal{A}^d_{\omega}:\Quot^d_{(\omega/X/S)} \rightarrow P_{p-1-d}
\]
is smooth with relative dimension $d-p$.
\end{thm}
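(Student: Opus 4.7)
The plan is to identify $\mathcal{A}^d_{\omega}$, at least after an \'{e}tale base change on $S$, with the structure map of a projective bundle over $P_{p-1-d}$, by combining theorem \ref{thm-AJ-general-properties}(3) with theorem \ref{thm-free-HIJ}. Since the smooth locus of $X\to S$ is open and surjective (the geometric fibres are integral curves, hence reduced), it admits a section \'{e}tale-locally on $S$. Because smoothness and relative dimension are \'{e}tale-local on the target, I may replace $S$ by such an \'{e}tale cover and assume that a universal rank-$1$ torsion-free sheaf $\mathcal{I}$ on $X_{P_{p-1-d}}$ exists; theorem \ref{thm-AJ-general-properties}(3) then identifies $\mathcal{A}^d_{\omega}$ with the projection $\PP(H(\mathcal{I},\omega_{P_{p-1-d}})) \to P_{p-1-d}$.

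The next step is to verify the Ext-vanishing hypothesis of theorem \ref{thm-free-HIJ} at every geometric point of $P_{p-1-d}$. Fix such a point, corresponding to a rank-$1$ torsion-free sheaf $\mathcal{I}$ on an integral projective curve $X_t$ of arithmetic genus $p$. Since $X_t$ is Cohen-Macaulay of pure dimension $1$ with dualizing sheaf $\omega$, Serre duality yields
\[
\Ext^1_{X_t}(\mathcal{I},\omega) \cong H^0(X_t,\mathcal{I})^{\vee}.
\]
I claim $H^0(X_t,\mathcal{I})=0$. A nonzero section would produce a nonzero map $\mathcal{O}_{X_t}\to\mathcal{I}$, which must be injective: its kernel is a subsheaf of the torsion-free sheaf $\mathcal{O}_{X_t}$, but at the generic point of $X_t$ the map is a nonzero homomorphism of one-dimensional $K(X_t)$-vector spaces, so the kernel would have to be a torsion subsheaf of $\mathcal{O}_{X_t}$, hence zero. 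The cokernel then has some finite length $\ell\geq 0$, so $\chi(\mathcal{I}) = (1-p)+\ell \geq 1-p$. But $\chi(\mathcal{I}) = p-1-d \leq -p < 1-p$ by the hypothesis $d\geq 2p-1$, a contradiction. Thus $\Ext^1(\mathcal{I},\omega)=0$ at every geometric point, and theorem \ref{thm-free-HIJ} implies $H(\mathcal{I},\omega_{P_{p-1-d}})$ is locally free of finite rank on $P_{p-1-d}$.

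Combining these observations, $\mathcal{A}^d_{\omega}$ is the projection from a projective bundle and is therefore smooth. The relative dimension is read off from lemma \ref{lem-AJ-fibers}: at a geometric point $t$ with representing sheaf $\mathcal{I}$, the fibre dimension equals $\dim_{k(t)} \Hom_{X_t}(\mathcal{I},\omega)-1$. Serre duality again identifies $\Hom(\mathcal{I},\omega)\cong H^1(X_t,\mathcal{I})^{\vee}$, and since $h^0(\mathcal{I})=0$ the Riemann-Roch identity $\chi(\mathcal{I})=h^0-h^1$ gives $h^1(\mathcal{I}) = -\chi(\mathcal{I}) = d-p+1$, so the relative dimension is $d-p$, as claimed.

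The main subtlety I anticipate is precisely the passage through an \'{e}tale cover in order to apply part (3) of theorem \ref{thm-AJ-general-properties} rather than only the weaker part (2); this is what lets me declare the Abel-Jacobi map to \emph{be} a projective bundle instead of merely a closed subscheme of one. An alternative approach would avoid the cover by using part (2) directly and then invoking the constancy of the fibre dimension computed above, together with a local-flatness argument, to conclude that the strongly-projective embedding must coincide with its ambient projective bundle; but descending smoothness along an \'{e}tale cover is cleaner.
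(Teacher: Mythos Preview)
Your argument is correct and follows exactly the route the paper indicates: the paper does not give a detailed proof of this cited result but remarks that it ``follows fairly easily from Riemann-Roch combined with theorems \ref{thm-free-HIJ} and \ref{thm-AJ-general-properties},'' and your proposal carries out precisely that plan, using Serre duality and the bound $d\geq 2p-1$ to force $\Ext^1(\mathcal{I},\omega)=0$ and to compute the fibre dimension.
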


Note that for any line bundle $\mathcal{L}$ on $X$ and any integer $n$, tensoring with $\mathcal{L}$ defines an isomorphism
\[
\nu_{\mathcal{L}}: P_n\rightarrow P_m
\]
where $m = n+\chi(\mathcal{L})$

\subsubsection{The Riemann-Kempf Formula in the Smooth Case}\label{sec-RK-smooth-case}

As motivation for the general case, we will briefly recall the original statement of the Riemann singularity theorem, and the proof given in \cite[4.3.2]{Fulton}.

Let $X$ be a projective non-singular curve of genus $g$ over an algebraically closed field $k$. Let $X^{(n)}$ denote the $n$-th symmetric power of the curve.
Note that in this case
\[
X^{(n)}\cong \Hilb^d_X \cong \Quot^d_{(\omega/X)}
\]

and $\overline{\Pic}^0_{(X/S)(\textrm{\'{e}t})}$ is isomorphic to the Jacobian $J_X$.

Therefore, we have an Abel-Jacobi map for each integer $d$
\[
A^{d}:X^{(d)}\rightarrow J_X
\]
as defined in the previous section which (after taking into account the identification of $\Hilb^d_X$ and $\Quot^d_{(\omega/X)}$) takes a degree $d$ divisor $D$ to the line bundle $\OO(D)$.

Let $W_d$ denote the image of $A^d$. In particular, $W_{g-1}$ is the theta divisor.

\begin{thm}\emph{(Riemann-Kempf formula, smooth case)}
Let $D$ be a degree $d$ divisor. Then the multiplicity of $W_d$ at $A^d(D)$ is
\[
\binom{g-d+r}{r}
\]
where $r+1$ is the dimension of the linear system $|D|\cong\PP^r_k$.

In particular, when $d=g-1$, the multiplicity is $r$. This is known as the \emph{Riemann singularity theorem}.
\end{thm}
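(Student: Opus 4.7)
The plan is to reduce the multiplicity to a Segre class computation on the smooth variety $X^{(d)}$, then lift to a higher symmetric product $X^{(N)}$ where the Abel-Jacobi map becomes a projective bundle, and extract the binomial by two applications of proposition \ref{prop-smooth-case}. Assume $d \leq g$ so that $A^d : X^{(d)} \to W_d$ is proper and birational (a generic effective divisor of degree at most $g$ is non-special with $h^0 = 1$). Let $p := A^d(D)$ and $F' := (A^d)^{-1}(p) = |D| \cong \PP^r_k \subset X^{(d)}$. By the functoriality of Segre classes under proper birational morphisms \cite[4.2]{Fulton}, one has $(A^d)_* s(F', X^{(d)}) = s(p, W_d) = \mult_p W_d \cdot [p]$, so it suffices to compute $\deg s(F', X^{(d)})$.

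To make this tractable, fix $N \geq 2g - 1$ and a general effective divisor $E$ of degree $N - d$, yielding a regular embedding $j_E : X^{(d)} \hookrightarrow X^{(N)}$ of smooth varieties under which $F'$ maps to a linear $\PP^r$ inside the fiber $F := (A^N)^{-1}(p + [E]) \cong \PP^{N-g}$. By theorem \ref{thm-AJ-curve-properties} the map $A^N$ is a smooth $\PP^{N-g}$-bundle, and proposition \ref{prop-smooth-case} applied to $j_E$ gives
\[
s(F', X^{(d)}) = c(N_{X^{(d)}}X^{(N)}|_{F'}) \cap s(F', X^{(N)}).
\]
Since $T_{J_X}$ is trivial, the normal bundle of any fiber of $A^N$ in $X^{(N)}$ is trivial; applying proposition \ref{prop-smooth-case} a second time to the chain $F' \hookrightarrow F \hookrightarrow X^{(N)}$ then identifies $s(F', X^{(N)})$ with the classical Segre class $s(\PP^r, \PP^{N-g}) = (1+h)^{-(N-g-r)} \cap [\PP^r]$, where $h$ is the hyperplane class on $\PP^r$.

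It remains to compute $N_{X^{(d)}}X^{(N)}|_{F'}$. A pointwise deformation-theoretic calculation identifies its fiber at $D' \in X^{(d)}$ with $H^0(\OO_E(D' + E)) \cong k^{N-d}$; globalizing this over $F' \cong \PP^r$ and tracking the twist by the tautological line bundle of the linear system produces $\OO_{\PP^r}(1)^{\oplus(N-d)}$, whence $c(N_{X^{(d)}}X^{(N)}|_{F'}) = (1+h)^{N-d}$. Assembling everything, $s(F', X^{(d)}) = (1+h)^{g-d+r} \cap [\PP^r]$, and the coefficient of $h^r$ yields $\binom{g-d+r}{r}$. I expect the main obstacle to be this last global identification of the normal bundle: while its fibers are easy to describe, pinning down the $\OO(1)$ twist requires tracing the universal divisor on $F' \times X$ through the definition of the Abel-Jacobi map. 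Everything else reduces to two invocations of proposition \ref{prop-smooth-case}, the triviality of $T_{J_X}$, and the classical formula for the Segre class of a linear subspace.
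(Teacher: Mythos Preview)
Your proposal is correct and follows essentially the same route as the paper: reduce the multiplicity to the Segre class $s(|D|,X^{(d)})$, lift via an embedding $X^{(d)}\hookrightarrow X^{(N)}$ with $N\geq 2g-1$, compute $s(|D|,X^{(N)})$ using that $A^N$ is a projective bundle over the smooth Jacobian, and then cancel via proposition~\ref{prop-smooth-case}. The only cosmetic difference is that the paper takes $E=s\cdot p$ for a single point $p$, which lets one factor $X^{(d)}\hookrightarrow X^{(N)}$ as a chain of codimension-one inclusions, each cut out by a section of $\OO(1)$; this makes the identification $c(N_{X^{(d)}}X^{(N)}|_{|D|})=(1+h)^{N-d}$ immediate and sidesteps the deformation-theoretic step you flagged as the main obstacle.
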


In \cite[4.3.2]{Fulton}, Fulton gives a beautiful proof of this theorem using elementary facts about Segre classes, together with the following properties of the Abel-Jacobi map which are well known (and implied by theorem \ref{thm-AJ-curve-properties} and lemma \ref{lem-AJ-fibers}):

\begin{enumerate}
\item The scheme theoretic fibers of $A^d$ are the linear systems $|D|\cong\PP^r_k$.
\item If $d\geq 2g-1$ then $A^d$ makes $X^{(d)}$ a projective bundle over $J_X$ of relative dimension $d-g$.
\item If $1\leq d \leq g$ then $A^d$ maps $X^{(d)}$ birationally onto it's image.
\end{enumerate}

Let $D$ be a degree $d$ divisor and $r$ an integer such that $|D|\cong\PP^r$. Fulton obtains the theorem as a corollary of the following more general fact:

\begin{equation}\label{eq-segre}
s(|D|,X^{(d)}) = (1+h)^{g-d+r}\cap [|D|]
\end{equation}
where $h$ is the first Chern class of the canonical line bundle on $|D|=\PP^r$. 
This implies the theorem since the degree of the pushforward of this Segre class is the multiplicity of $A^d(|D|)$ in $W_d$, and the degree of $(1+h)^{g-d+r}\cap [\PP^r]$ is precisely:
\[
\binom{g-d+r}{r}
\]

The proof of equation \ref{eq-segre} is done by reduction to the case where $d>>0$.

Let $d$ be an arbitrary integer. 
In order to reduce to the case where $d$ is large, we choose some integer $s$ such that $d+s\geq 2g-1$.
Furthermore, by choosing a point $p$ in $X$, we obtain an embedding
\[
X^{(d)}\hookrightarrow X^{(d+s)}
\]
by sending a degree $d$ divisor $D$ to $D+sp$. 
The reduction from the degree $d+s$ case to the degree $d$ case is motivated by the following commutative diagram with a left (but not right) fibered square:

\[
\xymatrix{
|D| \ar[r] \ar[d]   &  X^{(d)} \ar[d]^{A^d} \ar[r]   &  X^{(d+s)}  \ar[d]^{A^{d+s}}  \\
A^d(|D|)  \ar[r]    &  W_d \ar[r]                  &  J_X
}
\]

\begin{description}
\item[Proof when $d$ is large:] \hfill \\

When $d\geq 2g-1$, then $A^d$ is a smooth map and in particular it is flat. Therefore, since $J_X$ is smooth,

\begin{equation}\label{eq-segre-large-degree}
s(|D|,C^{(d)}) = (A^d)^*s(A^d(D), J_X) = (A^d)^*(\mult_{A^d}\cdot J_X[A^d(D)]) = \mult_{A^d}\cdot J_X[|D|] = [|D|]
\end{equation}

\item[Calculation of $s(|D|,X^{(d+s)})$ using $s(|D+sp|, X^{(d+s)})$ when $d+s$ is large:] \hfill \\

By equation \ref{eq-segre-large-degree}, we know that
\[
s(|D+sp|, X^{(d+s)}) = [|D+sp|] = [\PP^{d+s-g}]
\]

Since $|D|=\PP^r$, we have the following composition of closed embeddings
\[
|D|=\PP^r\rightarrow\PP^{d+s-g}\rightarrow X^{(d+s)}
\]
and since everything is smooth, from the conormal sequence we can deduce that
\begin{equation}\label{eq-segre-comparing-degrees}
\xymatrix @ R=0.3pc {
[|D|] = s(|D+sp|,X^{(d+s)})|_{|D|} = c(N_{|D|}|D+sp|)\cap s(|D|,X^{(d+s)}) \\
= (1+h)^{d+s-g-r}\cap s(|D|,X^{(d+s)})
}
\end{equation}
where $h$ is the first Chern class of the canonical bundle on $|D|\cong \PP^r$
\item[Calculation of $s(|D|,X^{(d)})$ using $s(|D|, X^{(d+s)})$ when $d+s$ is large:] \hfill \\

Finally, we consider the sequence of embeddings
\[
|D|\rightarrow X^{(d)} \rightarrow X^{(d+s)}
\]
It is easy to check that $c(N_{X^{(d)}}X^{(d+s)}|_{|D|})=(1+h)^s$.
Therefore, by a version of cancellation of Segre classes which can be explicitly shown to hold in this case (or even more easily, by proposition \ref{prop-smooth-case}),
\begin{equation}\label{eq-segre-cancellation-divisors}
s(|D|,X^{(d)}) = c(N_{X^{(d)}}X^{(d+s)}|_{|D|})\cap s(|D|,X^{(d+s)}) = \frac{(1+h)^s}{(1+h)^{d+s-g-r}}\cap [|D|] = (1+h)^{g-d+r}\cap [|D|]
\end{equation}

\end{description}

Notice that other than the three facts about the Abel-Jacobi map that we stated above, each of the three steps involved manipulations of Segre classes which hold in far greater generality. Since the facts about the Abel-Jacobi map are still true in the situation of an integral projective curve from the previous section, it is tempting to think that generalizing Fulton's proof to that case can be reduced to verifying that these three steps are still valid. Indeed, in section \ref{sec-statements-proofs} we will see that the first two steps can be carried out in a nearly identical fashion for arbitrary integral curves. The third step will turn out to be valid as well, and can be justified by theorem \ref{thm-main-theorem}.

\subsubsection{The Riemann Singularity Theorem in the Nodal Case}
In \cite{CMK}, S. Casalania-Martin and J. Kass proved the following version of the Riemann singularity theorem for planar curves.

\begin{thm}\cite[A]{CMK}\label{thm-RS-nodal-case}
Let $X$ be an integral projective curve with at most planar singularities over an algebraically closed field $k$.
Let $x$ be a point on the theta divisor $\Theta$ corresponding to a rank-1, torsion-free sheaf $\mathcal{I}$.
In the sheaf $\mathcal{I}$ fails to be locally free at $n$ nodes and no other points, then the multiplicity and order of vanishing of $\Theta$ at $x$ satisfy the equation
\[
\mult_x\Theta = (\mult_x\overline{\Pic}_{X/k}) \cdot \ord_x\Theta = 2^n \cdot h^0(X,\mathcal{I}) 
\] 

Specifically,
\[
(\mult_x\overline{\Pic}_{X/k}) = 2^n
\]
and
\[
\mult_x\Theta = (\mult_x\overline{\Pic}_{X/k}) \cdot h^0(X,\mathcal{I}) 
\]
\end{thm}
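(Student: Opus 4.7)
The plan is to mimic the three-step structure of Fulton's smooth-case proof recalled in Section \ref{sec-RK-smooth-case}, using Theorem \ref{thm-AJ-curve-properties} to control the Abel-Jacobi map $\mathcal{A}^{d}_{\omega}: \Quot^{d}_{(\omega/X)} \to P_{p-1-d}$ in place of its smooth-curve analogue, and invoking Theorem \ref{thm-main-theorem} to justify the Segre-class cancellation that, in Fulton's proof, was only available because everything was smooth. The factor $h^0(X,\mathcal{I})$ will emerge from a Chern-class computation on the fiber $|D| := (\mathcal{A}^{p-1}_{\omega})^{-1}(x) \cong \PP^r$, where by Lemma \ref{lem-AJ-fibers} and Serre duality for the dualizing sheaf (together with $\chi(\mathcal{I}) = 0$) one gets $r+1 = h^0(X,\mathcal{I})$. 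The factor $2^n$ will enter at the end, via the local structure of $\overline{\Pic}_{X/k}$ at sheaves that fail to be locally free at $n$ nodes.

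Set $d = p-1$, choose a smooth point $p_0 \in X$, and pick an integer $s$ with $d+s \geq 2p-1$. By Theorem \ref{thm-AJ-curve-properties}, $\mathcal{A}^{d+s}_{\omega}$ is smooth. Combining this with the functorial operation of ``adding $s p_0$'' to a quotient (available because $p_0$ is a smooth point), the large-degree step computes $s(|D+sp_0|,\Quot^{d+s}_{(\omega/X)}) = [|D+sp_0|]$ exactly as in equation \ref{eq-segre-large-degree}. The inclusion of projective fibers $|D| = \PP^r \subset |D+sp_0|$ inside the projective bundle $\Quot^{d+s}_{(\omega/X)} \to P_{-s}$ then gives the Segre-class comparison of equation \ref{eq-segre-comparing-degrees} verbatim, expressing $s(|D|,\Quot^{d+s}_{(\omega/X)})$ in terms of $[\PP^r]$ and the hyperplane class $h$.

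The crux, and the main obstacle, is the third step: cancelling $N_{\Quot^{d}_{(\omega/X)}}\Quot^{d+s}_{(\omega/X)}|_{|D|}$ to descend back to $\Quot^{d}_{(\omega/X)}$. The ambient $\Quot^{d+s}_{(\omega/X)}$ is singular, so neither Proposition \ref{prop-easy-case} nor Proposition \ref{prop-smooth-case} applies; however, the embedding $\Quot^{d}_{(\omega/X)} \hookrightarrow \Quot^{d+s}_{(\omega/X)}$ should satisfy the formal-local section hypothesis of Theorem \ref{thm-main-theorem}. Deformation-theoretically, the functor ``add a length-$s$ quotient at the smooth point $p_0$'' defines a smooth map $P \to \Quot^{d}_{(\omega/X)}$ of relative dimension $s$ admitting a natural section, and at formal completions this exhibits the inclusion $\Quot^{d}_{(\omega/X)} \hookrightarrow \Quot^{d+s}_{(\omega/X)}$ as (formally locally) a section of a smooth map. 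Granting this verification, Theorem \ref{thm-main-theorem} yields $s(|D|,\Quot^{d}_{(\omega/X)}) = (1+h)^{r+1} \cap [\PP^r]$ as in equation \ref{eq-segre-cancellation-divisors} with $d = p-1$, whose total degree is $r+1 = h^0(X,\mathcal{I})$.

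To finish, push forward along $\mathcal{A}^{p-1}_{\omega}$. Since a generic point of $\Theta$ corresponds to a line bundle with a one-dimensional linear system, $\mathcal{A}^{p-1}_{\omega}$ has generic degree one onto $\Theta$, so the degree of $(\mathcal{A}^{p-1}_{\omega})_* s(|D|,\Quot^{p-1}_{(\omega/X)})$ recovers the order of vanishing $\ord_x \Theta = h^0(X,\mathcal{I})$ of the local equation for $\Theta$ inside $\overline{\Pic}_{X/k}$. The classical computation (going back to Rego and collaborators) that $\overline{\Pic}_{X/k}$ is etale-locally at $x$ isomorphic to a product of $n$ node singularities $\Spec{k[u,v]/(uv)}$ with a smooth factor immediately gives $\mult_x \overline{\Pic}_{X/k} = 2^n$, and the identity $\mult_x \Theta = \mult_x \overline{\Pic}_{X/k} \cdot \ord_x \Theta$ for a Cartier divisor in such an ambient scheme completes the proof. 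The hardest part will be the formal-local verification in Step 3, which is precisely the functorial moduli-theoretic input that Theorem \ref{thm-main-theorem} was designed to accept.
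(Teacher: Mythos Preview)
First, note that the paper does not itself prove Theorem~\ref{thm-RS-nodal-case}: it is quoted from \cite{CMK}. What the paper \emph{does} prove is Theorem~\ref{thm-RKF-integral-curves} and its Corollary~\ref{cor-RST-planar-case}, which recover the identity $\mult_x\Theta = \mult_xP_{0}\cdot h^0(X,\mathcal{I})$ but do not address the further identifications $\mult_xP_{0}=2^n$ or $\ord_x\Theta=h^0(X,\mathcal{I})$. Your outline is broadly parallel to the paper's proof of Theorem~\ref{thm-RKF-integral-curves}, but it contains a genuine error, and the place where you try to patch it is where it goes wrong.

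The error is in your large-degree step. You assert $s(|D+sp_0|,\Quot^{d+s}_{(\omega/X)}) = [|D+sp_0|]$ ``exactly as in equation~\ref{eq-segre-large-degree}.'' But the last equality in~\eqref{eq-segre-large-degree} uses that $J_X$ is smooth, so that $\mult_{A^d(D)}J_X=1$. In the singular case the flat pullback of $s(x,P)$ along the smooth Abel--Jacobi map gives $\mult_xP\cdot[\text{fiber}]$, and this factor must be carried through all three steps; this is precisely what the paper does (see the first displayed line in the proof of Theorem~\ref{thm-RKF-integral-curves}). With that correction, your cancellation step yields $s(|D|,\Quot^{p-1}_{(\omega/X)}) = \mult_xP_0\cdot(1+h)^{r+1}\cap[\PP^r]$.

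Your push-forward step then misidentifies what is being computed. By \cite[4.2]{Fulton}, proper birational push-forward of Segre classes gives $(\mathcal{A}^{p-1}_{\omega})_*\,s(|D|,\Quot^{p-1}_{(\omega/X)}) = s(x,\Theta)$, whose degree is $\mult_x\Theta$, \emph{not} $\ord_x\Theta$. So the Segre-class argument already delivers $\mult_x\Theta=\mult_xP_0\cdot(r+1)$ directly, and there is no need to invoke the Cartier-divisor identity $\mult_x\Theta=\mult_xP_0\cdot\ord_x\Theta$ at the end to reinsert the multiplicity factor. (Your two mistakes partially cancel, but the logic as written does not hold.) Note also that applying this with $Z=P_0$ requires $P_0$ to be equi-dimensional; the paper secures this via irreducibility from \cite{AIK}, a point you should make explicit. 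The remaining claims---that $\mult_xP_0=2^n$ from the local product-of-nodes description, and that the push-forward also computes $\ord_x\Theta$---are not treated in the paper and would need independent justification if you want the full statement of Theorem~\ref{thm-RS-nodal-case}.
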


It is interesting to note that in equation \ref{eq-segre-large-degree}, section \ref{sec-RK-smooth-case}, we dropped the term $\mult_{A^d}J_X$ since in the smooth case it is equal to $1$. However, if we would have held on to it, our derivation of the Riemann singularity theorem in the smooth case would have led us to an equality which symbolically in precisely theorem \ref{thm-RS-nodal-case}.

The proof of this theorem in \cite{CMK} used different methods, so it would be interesting to test the boundaries of Fulton's proof from section \ref{sec-RK-smooth-case}. In the next section we'll use our theorem for cancellation of Segre classes to show that this theorem holds for integral projective curves with planar singularities. In addition, we'll prove a version of the Riemann-Kempf formula for arbitrary families of integral curves.

\subsection{Statements and Proofs}\label{sec-statements-proofs}
\subsubsection{Statement and Proof of the Riemann Kempf Formula for Integral Curves}
In this section by proving a generalization of the Riemann-Kempf formula for integral curves. More specifically, we will generalize equation \ref{eq-segre}.
The notation here is taken from section \ref{sec-AK-reference}. In particular, $P_d$ will denote $\overline{\Pic}^d_{(X/S)(\textrm{\'{e}t})}$ and 
\[
\mathcal{A}^d_{\omega}:\Quot^d_{(\omega/X/S)} \rightarrow P_{p-1-d}
\]
is the $d$-th part of the Abel-Jacobi map.

The proof of this theorem will use a collection of technical results about Quot schemes which we will prove in section \ref{sec-technical-lemmas}.

\begin{thm}\label{thm-RKF-integral-curves}
Let $k$ be an algebraically closed field of characteristic $0$ and let $X/k$ an integral projective $k$-scheme with arithmetic genus $p$.

Let $d$ be an integer, $Z$ be an equi-dimensional subscheme of $P_{p-1-d}$, $\Quot^d_{(\omega/X/k)Z}$ the pullback of $\Quot^d_{(\omega/X/k)}$ and $\mathcal{A}^d_{\omega Z}$ the pullback of $\mathcal{A}^d_{\omega}$. 

Let $x$ be a $k$ point of $Z$ and $r$ an integer such that $(\mathcal{A}^d_{\omega Z})^{-1}(x)\cong \PP^r_k$. Then
\[
s((\mathcal{A}^d_{\omega Z})^{-1}(x), \Quot^d_{(\omega/X/k)Z}) = \mult_xZ \cdot (1+h)^{p-d+r}\cap [(\mathcal{A}^d_{\omega Z})^{-1}(x)]
\]
where $h$ is the first Chern class of the canonical bundle on $\PP^r_k$.
\end{thm}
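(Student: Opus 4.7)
The plan is to mimic Fulton's proof of the smooth Riemann-Kempf formula recalled in section \ref{sec-RK-smooth-case}, invoking theorem \ref{thm-main-theorem} in place of the steps that previously exploited the smoothness of the symmetric powers. Fix an integer $s$ with $d+s \geq 2p-1$, and use a degree-$s$ line bundle $\mathcal{L}$ on $X$ (together with a choice of section) to produce both the translation isomorphism $\nu_{\mathcal{L}}: P_{p-1-d} \xrightarrow{\sim} P_{p-1-(d+s)}$ and a corresponding closed embedding of Quot schemes $\iota: \Quot^d_{(\omega/X/k)} \hookrightarrow \Quot^{d+s}_{(\omega/X/k)}$ lifting it. Writing $Z' = \nu_{\mathcal{L}}(Z)$, $x' = \nu_{\mathcal{L}}(x)$, $F = (\mathcal{A}^d_{\omega Z})^{-1}(x) \cong \PP^r_k$, $F' = (\mathcal{A}^{d+s}_{\omega Z'})^{-1}(x') \cong \PP^{d+s-p}_k$, and $Q^{+} := \Quot^{d+s}_{(\omega/X/k)Z'}$, the base change $\iota_Z$ of $\iota$ embeds $F$ linearly in $F'$. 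The plan is then to compute $s(F, \Quot^d_{(\omega/X/k)Z})$ by traversing the chain $F \hookrightarrow F' \hookrightarrow Q^{+}$ and then cancelling the contribution of $\iota_Z$.

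The first step is the large-degree calculation. Since $d+s \geq 2p-1$, theorem \ref{thm-AJ-curve-properties} makes $\mathcal{A}^{d+s}_{\omega Z'}$ flat (being a base change of a smooth map), so flat pullback of Segre classes (\cite[4.2]{Fulton}) applied to $s(\{x'\}, Z') = \mult_{x'} Z' \cdot [\{x'\}] = \mult_x Z \cdot [\{x'\}]$ yields $s(F', Q^{+}) = \mult_x Z \cdot [F']$. The second step moves from $F'$ down to $F$ inside $Q^{+}$. The technical results of section \ref{sec-technical-lemmas} will supply the analogue of Fulton's conormal-sequence calculation in this possibly non-smooth setting, exploiting the smoothness of $\mathcal{A}^{d+s}_{\omega Z'}$ over $Z'$ and the fact that the normal bundle of the linear embedding $F=\PP^r \hookrightarrow F'=\PP^{d+s-p}$ has Chern class $(1+h)^{d+s-p-r}$ to deduce
\[
s(F, Q^{+}) = \frac{\mult_x Z \cdot [F]}{(1+h)^{d+s-p-r}}.
\]

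The third and crucial step applies theorem \ref{thm-main-theorem} to the degree-jumping embedding $\iota_Z: \Quot^d_{(\omega/X/k)Z} \hookrightarrow Q^{+}$. The technical lemmas of section \ref{sec-technical-lemmas} must verify that (a) $\iota_Z$ is a regular embedding which formally locally looks like a section of a smooth map, as required by theorem \ref{thm-main-theorem}, and (b) $c(N_{\iota_Z}|_F) = (1+h)^s$. Granting (a) and (b), the cancellation theorem delivers
\[
s(F, \Quot^d_{(\omega/X/k)Z}) = (1+h)^s \cdot \frac{\mult_x Z \cdot [F]}{(1+h)^{d+s-p-r}} = \mult_x Z \cdot (1+h)^{p-d+r} \cap [F],
\]
which is the desired formula. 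I expect the main obstacle to be item (a): one must recognize the embedding $\iota_Z$ between Quot schemes of different Euler characteristics as formally locally modeled by a section of a smooth map, which should be extracted from the functor-of-points description of the Quot schemes together with the deformation-theoretic interpretation of their formal neighborhoods suggested in the introduction. Item (b) is then a direct computation with the universal quotient on the universal curve, mirroring the classical identification $c(N_{X^{(d)}} X^{(d+s)}|_{|D|}) = (1+h)^s$ used by Fulton in the smooth case.
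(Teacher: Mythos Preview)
Your proposal is correct and follows the same three-step architecture as the paper's proof: large-degree base case via flatness of $\mathcal{A}^{d+s}_\omega$, passage from $F'$ to $F$ inside $Q^+$, and finally cancellation via theorem~\ref{thm-main-theorem}. The one implementation difference is in how the degree-jumping embedding is built. Rather than a single degree-$s$ line bundle with a section, the paper chooses $s$ distinct smooth points $x_1,\dots,x_s\in X$ and factors the embedding as $q=q_1\circ\cdots\circ q_s$ where each $q_j:\Quot^i\hookrightarrow\Quot^{i+1}$ sends $\mathcal{I}\mapsto\mathcal{I}\otimes\mathcal{I}_{x_j}$. This buys two simplifications for the technical lemmas you anticipate: your item~(a) reduces to showing each codimension-one $q_j$ is formally locally the zero section of a trivial line bundle (lemma~\ref{lem-local-quot-structure}, proved by splitting off the length-one component of the universal quotient supported near $x_j$), and your item~(b) reduces to showing each $q_j$ is a Cartier divisor cut out by $\OO(1)$ (lemma~\ref{lem-embedding-of-quot-is-cartier}), so that the total normal bundle restricted to $F$ has Chern class $(1+h)^s$ by the fiber-square relation among the $q_j$ (remark~\ref{rem-intersection-of-embeddings}). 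Your single-step version would work too, but the paper's factorization keeps each verification in codimension one.
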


\begin{rem}
The condition that $Z$ be equi-dimensional comes from the conditions of proposition \cite[4.2]{Fulton}.
In general, proper pushforward of segre classes in ill defined for schemes with components of varying dimensions.
\end{rem}

\begin{cor}\label{cor-RKF-planar-case}
Let $k$ be a algebraically closed field of characteristic $0$ and let $X$ be a projective integral curve of arithmetic genus $p$ over $k$ with at most planar singularities.
Let $x$ be a $k$-point of $P_{p-1-d}$ and $r$ an integer such that $(\mathcal{A}^d_{\omega})^{-1}(x)\cong \PP^r_k$.
Then
\[
s((\mathcal{A}^d_{\omega})^{-1}(p), \Quot^d_{(\omega/X/k)}) = \mult_xP_{p-1-d} \cdot (1+h)^{p-d+r}\cap [(\mathcal{A}^d_{\omega})^{-1}(p)]
\]
where $h$ is the first Chern class of the canonical bundle on $\PP^r_k$.
\end{cor}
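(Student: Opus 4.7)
The plan is to deduce this corollary as a direct specialization of Theorem \ref{thm-RKF-integral-curves}, taking $Z$ to be the full compactified Jacobian $P_{p-1-d}$. Under that substitution, $\Quot^d_{(\omega/X/k)Z}$ becomes $\Quot^d_{(\omega/X/k)}$ and $\mathcal{A}^d_{\omega Z}$ becomes $\mathcal{A}^d_{\omega}$, so the right-hand side of the theorem matches the right-hand side of the corollary verbatim. Thus the entire content of the corollary beyond the theorem is the verification of a single hypothesis: that $Z = P_{p-1-d}$ is equi-dimensional.

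The main obstacle is therefore this equi-dimensionality claim, and for it I would appeal to the structure theorem for compactified Jacobians of integral planar curves. Specifically, by the result of Altman--Iarrobino--Kleiman (together with Rego's extension), if $X$ is an integral projective curve over an algebraically closed field with at most planar singularities, then each component $P_n$ of the compactified Picard scheme is irreducible of dimension equal to the arithmetic genus $p$. In particular $P_{p-1-d}$ is equi-dimensional, which is the only input the theorem requires. Notice that the planarity hypothesis is essential here: for integral curves with more complicated singularities the compactified Jacobian can fail to be equi-dimensional, and in that case the pushforward machinery of \cite[4.2]{Fulton} underlying Theorem \ref{thm-RKF-integral-curves} cannot be applied to all of $P_{p-1-d}$ at once.

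With this ingredient in place the proof is a one-line application: for any $k$-point $x$ of $P_{p-1-d}$ with $(\mathcal{A}^d_\omega)^{-1}(x)\cong\PP^r_k$, the hypotheses of Theorem \ref{thm-RKF-integral-curves} hold with $Z = P_{p-1-d}$, and its conclusion is precisely the displayed formula in the corollary. No further geometric input is needed, and in particular the machinery of Theorem \ref{thm-main-theorem} (which powered the theorem) is already packaged into the statement we are specializing.
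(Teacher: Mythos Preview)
Your proposal is correct and matches the paper's own proof essentially verbatim: the paper cites \cite[9]{AIK} to conclude that $P_{p-1-d}$ is irreducible (hence equi-dimensional) in the planar case, and then applies Theorem~\ref{thm-RKF-integral-curves} with $Z=P_{p-1-d}$. Your additional remarks about Rego and the necessity of planarity are accurate context but not part of the formal argument.
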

\begin{proof}
By \cite[9]{AIK}, $P_{p-1-d}$ is irreducible in this case. 
The corollary then follows immediately from theorem \ref{thm-RKF-integral-curves}.
\end{proof}

\begin{cor}\label{cor-RST-planar-case}
Let $k$ be a algebraically closed field of characteristic $0$ and let $X$ be a projective integral curve of arithmetic genus $p$ over $k$ with at most planar singularities.
Let $x$ be a $k$-point of $P_0$ corresponding to an rank-1 torsion free sheaf $\mathcal{I}$.
Let $\Theta$ denote the image of $\mathcal{A}^{p-1}_{\omega}$ in $P_0$. Then
\[
\mult_x\Theta = \mult_xP_{0} \cdot (h^1(X,\mathcal{I}) - 1)
\]
where $h$ is the first Chern class of the canonical bundle on $(\mathcal{A}^{p-1}_{\omega})^{-1}(x)\cong\PP^r_k$ and $r = h^1(X,\mathcal{I}) - 1$.
\end{cor}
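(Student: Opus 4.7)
The plan is to derive the corollary directly from Corollary \ref{cor-RKF-planar-case} by a proper pushforward along the Abel-Jacobi map. Setting $d=p-1$ (so that $P_{p-1-d}=P_0$) and writing $F := (\mathcal{A}^{p-1}_\omega)^{-1}(x)\cong \PP^r_k$, that corollary gives
\[
s\bigl(F,\Quot^{p-1}_{(\omega/X/k)}\bigr) \;=\; \mult_x P_0 \cdot (1+h)^{r+1}\cap [F].
\]
I will push this equality forward along $\mathcal{A}^{p-1}_\omega$ in order to recover $s(x,\Theta)$, whose degree is $\mult_x \Theta$.

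To make the pushforward meaningful I need to know that $\mathcal{A}^{p-1}_\omega$, viewed as a map $\Quot^{p-1}_{(\omega/X/k)}\to\Theta$, is proper and birational. Properness is immediate from theorem \ref{thm-AJ-general-properties}(1). Birationality is a dimension count: source and target both have dimension $p-1$, so it suffices to show that the generic fiber is a reduced point. By lemma \ref{lem-AJ-fibers}, the fiber over a class $[\mathcal{I}]\in\Theta$ is $\PP\bigl(\Hom(\mathcal{I},\omega)\bigr)$, whose dimension equals $h^1(X,\mathcal{I})-1$ by Serre duality on the Gorenstein integral curve $X$; this vanishes on the dense open locus of $\Theta$ where $h^1 = 1$. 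Granted birationality, the proper pushforward formula for Segre classes (\cite[4.2]{Fulton}) yields
\[
(\mathcal{A}^{p-1}_\omega)_*\, s\bigl(F,\Quot^{p-1}_{(\omega/X/k)}\bigr) \;=\; s(x,\Theta) \;=\; \mult_x\Theta \cdot [x].
\]

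To conclude, I would compute the same pushforward from the right hand side of the first display. Since $\mathcal{A}^{p-1}_\omega$ contracts $F\cong\PP^r$ to the closed point $x$, only the zero-dimensional component of $(1+h)^{r+1}\cap [\PP^r]$ contributes, and that component is the $h^r$ term. The resulting equality of zero-cycles gives the desired multiplicity identity, with the numerical factor furnished by the appropriate binomial coefficient in $(1+h)^{r+1}$. The principal obstacle is the birationality statement above: it is the only ingredient that does not follow purely formally from Segre class manipulations, and it relies on structural input, namely the irreducibility of $P_0$ in the planar case (already cited in the proof of Corollary \ref{cor-RKF-planar-case}) together with the fiber-dimension computation from lemma \ref{lem-AJ-fibers}.
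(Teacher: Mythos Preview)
Your approach is essentially the same as the paper's: apply Corollary~\ref{cor-RKF-planar-case} with $d=p-1$, then push the Segre class forward along $\mathcal{A}^{p-1}_\omega$ using \cite[4.2]{Fulton}, invoking properness and birationality onto $\Theta$. The only substantive difference is in how the hypotheses for the pushforward are secured: the paper cites \cite[3.1]{CMK} directly for the irreducibility of $\Theta$, whereas you appeal to the irreducibility of $P_0$ together with the fiber dimension count. Your route is slightly less direct here, since what Fulton's formula actually requires is that $\Theta$ (or the source) be a variety; irreducibility of $P_0$ alone does not immediately give this, though in the planar case it does follow once you know $\Quot^{p-1}_{(\omega/X/k)}$ is irreducible (e.g.\ via \cite{AIK}), and then $\Theta$ is irreducible as its image.
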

\begin{proof}
By \cite[3.1]{CMK}, $\Theta$ is irreducible. Furthermore, by looking at the dimensions of the fibers, $\mathcal{A}^{p-1}_{\omega}$ is birational onto its image. The corollary now follows since
\[
s(x,\Theta) = (\mathcal{A}^{p-1}_{\omega})_{*}s((\mathcal{A}^d_{\omega})^{-1}(p), \Quot^d_{(\omega/X/S)})
\]
and by corollary \ref{cor-RKF-planar-case} this is equal to $\mult_xP_{0} \cdot (h^1(X,\mathcal{I}) - 1)$
\end{proof}

We now turn to the proof of theorem \ref{thm-RKF-integral-curves}. 
We will follow the steps of Fulton's proof from section \ref{sec-RK-smooth-case}, and show that everything works in this more general setting.

As in that proof, the key step will be to reduce the general case to case where $d$ is large.
To this end, for a fixed $d$, choose $s$ such that $d+s > 2g -1$.
For each $i$ and $1\leq j \leq s$ we will define natural maps of functors
\[
\fname{Quot}^{i}_{(\omega/X/k)} \xrightarrow{q_{j}} \fname{Quot}^{i+1}_{(\omega/X/k)}
\]
. The maps $q_j$ all increase the degree by one so the degree $i$ will be implied by the context. We will then show that these maps have properties which allow us to carry out the reductions that we used in Fulton's proof of the smooth case.

Let $x_1,\dots,x_{s}$ be distinct $k$-points in the smooth locus of $X$ and let $\mathcal{I}_{x_i}$ denote their ideal sheaves. For each $i$ and $1\leq j \leq s$ we define the map
\[
\fname{Quot}^{i}_{(\omega/X/k)} \xrightarrow{q_{j}} \fname{Quot}^{i+1}_{(\omega/X/k)}
\]
as follows.
Let $T$ be an $S$ scheme. Then our map will send a quotient
\[
0\rightarrow \mathcal{I}(G) \rightarrow \omega \rightarrow \mathcal{G} \rightarrow 0
\]
in $\fname{Quot}^{i}_{(\omega/X/k)}(T)$ to the quotient
\[
0\rightarrow \mathcal{I}(G)\otimes\mathcal{I}_{x_j} \rightarrow \omega \rightarrow \mathcal{G}' \rightarrow 0
\]
in $\fname{Quot}^{i+1}_{(\omega/X/k)}(T)$ where $\mathcal{G}'$ is defined to be the quotient. 
One way to verify that this is in fact a morphism of functors is to note that $X$ has an affine cover 
\[
X = (X\setminus x_j) \coprod X^{\textrm{sm}}
\]
such that on the first chart $\mathcal{I}_{x_j}$ is trivial, and on the second chart both $\mathcal{I}_{x_j}$ and $\omega$ are line bundles.

\begin{rem}\label{rem-intersection-of-embeddings}
These morphisms are related by the following fiber diagram
\[
\xymatrix{
\fname{Quot}^{i}_{(\omega/X/k)} \ar[r]^{q_{l}} \ar[d]^{q_j} & \fname{Quot}^{i+1}_{(\omega/X/k)} \ar[d]^{q_j} \\
\fname{Quot}^{i+1}_{(\omega/X/k)} \ar[r]^{q_l}               & \fname{Quot}^{i+2}_{(\omega/X/k)}
}
\]
for any $i$ and $1\leq j,k \leq s$.

As we'll soon see, the maps $q_j$ are closed embeddings so at the level of the schemes representing these functors, this implies that composing the maps $q_l$ and $q_j$ is equivalent to taking the intersection of the images of these maps. Additional properties of the maps $q_j$ will be proved in section \ref{sec-technical-lemmas}.
\end{rem}

\begin{proof}[Proof of theorem \ref{thm-RKF-integral-curves}:] \hfill \\
\begin{description}
\item[Proof when $d$ is large:] \hfill \\

When $d\geq 2g-1$, then by theorem \ref{thm-AJ-curve-properties}, $\mathcal{A}^d_{\omega}$ is a smooth map and in particular it is flat. Therefore, 
\[
s((\mathcal{A}^d_{\omega Z})^{-1}(x), \Quot^d_{(\omega/X/k)Z}) =  (\mathcal{A}^d_{\omega Z})^*s(x, Z) = \mult_xZ \cdot [(\mathcal{A}^d_{\omega Z})^{-1}(x)]
\]

\item[Calculation of $s((\mathcal{A}^d_{\omega Z})^{-1}(x),\Quot^{d+s}_{(\omega/X/k)Z})$ using $s((\mathcal{A}^{d+s}_{\omega Z})^{-1}(x), \Quot^{d+s}_{(\omega/X/k)Z})$ when $d+s$ is large:] \hfill \\
By lemma \ref{lem-AJ-fibers}, we know that
\[
(\mathcal{A}^d_{\omega Z})^{-1}(x) \cong \PP^r_k
\]
for some $r$ and together with theorem \ref{thm-AJ-curve-properties} we know that 
\[
(\mathcal{A}^{d+s}_{\omega Z})^{-1}(x) \cong \PP^{d+s-p}_k
\]
For the rest of this step we will identify the fibers with these projective spaces.
By the previous step, we know that
\[
s(\PP^{d+s-p}_k, \Quot^{d+s-p}_{(\omega/X/k)Z}) = \mult_xZ \cdot [\PP^{d+s-p}_k]
\]

Furthermore, we have the following composition of closed embeddings
\[
\PP^r\xrightarrow{i}\PP^{d+s-p}\rightarrow X^{(d+s)}
\]
and by lemma \ref{lem-induced-embedding-of-fibers} the first embedding is regular and the Chern class of its normal bundle is
\[
(1+h)^{d+s-p-r}
\]
where $h$ is the first Chern class of the canonical line bundle on $\PP^r_k$.
Therefore, by lemma \ref{lem-composition-into-smooth}, we have
\[
\xymatrix @ R=0.3pc { 
\mult_xZ \cdot [\PP^r] = i^*s(\PP^{d+s-p}_k,\Quot^{d+s-p}_{(\omega/X/S)Z}) \\
= (1+h)^{d+s-p-r}\cap s(\PP^r_k,\Quot^{d+s}_{(\omega/X/S)Z})
}
\]

\item[Calculation of $s((\mathcal{A}^d_{\omega Z})^{-1}(x),\Quot^d_{(\omega/X/k)Z})$ using $s((\mathcal{A}^d_{\omega Z})^{-1}(x),\Quot^{d+s}_{(\omega/X/k)Z})$ when $d+s$ is large:] \hfill \\
As before, we have
\[
(\mathcal{A}^d_{\omega Z})^{-1}(x) \cong \PP^r_k
\]

Consider the sequence of embeddings
\[
\PP^r_k \rightarrow \Quot^{d}_{(\omega/X/k)Z} \xrightarrow{q} \Quot^{d+s}_{(\omega/X/k)Z}
\]
where $q$ is equal to the composition $q_1\circ q_2 \dots \circ q_s$.
By lemma \ref{lem-embedding-of-quot-is-cartier} combined with remark \ref{rem-intersection-of-embeddings}, the restriction of the normal bundle of the embedding $q$ to $\PP^r_k$ is $(1+h)^s$.

Therefore, by lemma \ref{lem-local-quot-structure} and our cancellation theorem for Segre classes (theorem \ref{thm-main-theorem})

\[
\xymatrix @ R=0.3pc {
s(\PP^r_k,\Quot^{d}_{(\omega/X/k)Z}) = (1+h)^s \cap s(\PP^r,\Quot^{d+s}_{(\omega/X/k)Z}) = \\
\mult_xZ \cdot \frac{(1+h)^s}{(1+h)^{d+s-p-r}}\cap [\PP^r_k] = \mult_xZ \cdot (1+h)^{p-d+r}\cap [\PP^r_k]
}
\]

\end{description}
\end{proof}

\subsubsection{A Collection of Technical Lemmas}\label{sec-technical-lemmas}
In order to use the maps $q_j$, we will need to prove that they are sufficiently nice as formalized by the lemmas in this section. Many of the results here are well known, but are stated for convenience.

Before doing this, we will cite two well-known lemmas (some people will call these definitions) from \cite{S} which ,incidentally, are used there to prove analogous properties of these maps in the smooth case.

\begin{lem}(\cite[3]{S})\label{lem-epimorphism-of-sheaves}
An epimorphism of coherent sheaves $u:\mathcal{E}\rightarrow\mathcal{F}$ on a variety $J$ induces a closed immersion
\[
\PP(\mathcal{F})\xrightarrow{q}\PP(\mathcal{E})
\]
such that $u^*\OO_{\PP(\mathcal{E})}(1)\cong \OO_{\PP(\mathcal{F})}(1)$
\end{lem}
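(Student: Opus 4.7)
The plan is to realize $\PP(\mathcal{E})$ concretely as the relative Proj of its symmetric algebra, and then reduce the claim to the standard fact that a surjection of graded quasi-coherent $\OO_J$-algebras induces a closed immersion of the associated Proj schemes in the opposite direction. Recall that $\PP(\mathcal{E}) := \operatorname{Proj}_J(\operatorname{Sym}(\mathcal{E}))$, equipped with its tautological line bundle $\OO_{\PP(\mathcal{E})}(1)$ coming from the degree-one piece of the grading, and similarly for $\mathcal{F}$.

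First I would apply the symmetric algebra functor to the given surjection $u$. Since $\operatorname{Sym}$ is left adjoint to the forgetful functor from graded $\OO_J$-algebras to $\OO_J$-modules, it is right exact; in particular, $u$ induces a surjective degree-preserving map of quasi-coherent graded $\OO_J$-algebras
\[
\operatorname{Sym}(u) \colon \operatorname{Sym}(\mathcal{E}) \longrightarrow \operatorname{Sym}(\mathcal{F})
\]
that is the identity in degree zero and coincides with $u$ in degree one. Both algebras are generated in degree one over $\OO_J$ by construction.

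Second, I would invoke the standard result (EGA II, 3.6) that a surjection of this form gives a closed immersion $q \colon \PP(\mathcal{F}) \hookrightarrow \PP(\mathcal{E})$ of the associated relative Proj schemes. The required compatibility $q^* \OO_{\PP(\mathcal{E})}(1) \cong \OO_{\PP(\mathcal{F})}(1)$ then falls out of the construction itself, since each tautological line bundle is by definition the degree-one twist of the structure sheaf of the corresponding Proj, and the morphism $q$ is built precisely so that its pullback on graded algebras preserves the grading.

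I do not expect any serious obstacle; the only care needed is ensuring that the construction glues on a general variety $J$. This is local on the base: on an affine open $\operatorname{Spec} A \subset J$ the sheaves become $A$-modules with a surjection $E \to F$, the inducedmap $\operatorname{Sym}_A(E) \to \operatorname{Sym}_A(F)$ is a graded surjection of polynomial quotients, and the claim reduces to the classical affine statement. Functoriality of relative Proj and of $\OO(1)$ then glues these local pictures together, finishing the proof.
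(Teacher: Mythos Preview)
Your argument is correct and is the standard proof of this fact. Note, however, that the paper does not actually prove this lemma: it is merely cited from \cite{S} as a well-known result (the paper even remarks that ``some people will call these definitions''), so there is no proof in the paper to compare against. Your reduction to the surjection of symmetric algebras and the EGA~II closed-immersion criterion for relative Proj is exactly the expected justification, and the compatibility of the $\OO(1)$'s is, as you say, built into the construction.
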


\begin{lem}(\cite[4]{S})\label{lem-epimorphism-of-sheaves-good-kernel}
Let $u:\mathcal{E}\rightarrow\mathcal{F}$ be an epimorphism of coherent sheaves on a variety $J$ with kernel $\OO_J$. Then the induced closed immersion $\PP(\mathcal{F})\xrightarrow{q}\PP(\mathcal{E})$ is represented by the sheaf $u^*\OO_{\PP(\mathcal{E})}(-1)$.
\end{lem}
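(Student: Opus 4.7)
The plan is to identify $\PP(\mathcal{F}) \subset \PP(\mathcal{E})$ as the zero scheme of an explicit global section of $\OO_{\PP(\mathcal{E})}(1)$ arising from the identification $\ker u \cong \OO_J$, and then to read off from that description that the ideal sheaf of $q$ is $\OO_{\PP(\mathcal{E})}(-1)$. Let $\pi \colon \PP(\mathcal{E}) \to J$ be the structure morphism and let $\pi^*\mathcal{E} \twoheadrightarrow \OO_{\PP(\mathcal{E})}(1)$ be the universal quotient. Composing the inclusion $\pi^*\OO_J = \OO_{\PP(\mathcal{E})} \hookrightarrow \pi^*\mathcal{E}$ coming from $\ker u = \OO_J$ with the universal quotient yields a canonical global section $\sigma \in \Gamma(\PP(\mathcal{E}), \OO_{\PP(\mathcal{E})}(1))$.

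By the universal property used to construct $q$ in Lemma \ref{lem-epimorphism-of-sheaves}, a $T$-point of $\PP(\mathcal{E})$ factors through $\PP(\mathcal{F})$ precisely when the pulled-back quotient $\pi^*\mathcal{E}|_T \to \OO_{\PP(\mathcal{E})}(1)|_T$ annihilates the image of $\pi^*\OO_J|_T$; that condition is exactly the vanishing of $\sigma|_T$. Hence $\PP(\mathcal{F})$ coincides, as a closed subscheme of $\PP(\mathcal{E})$, with the zero scheme $Z(\sigma)$, whose ideal sheaf is the image of the dual map $\OO_{\PP(\mathcal{E})}(-1) \xrightarrow{\sigma^\vee} \OO_{\PP(\mathcal{E})}$.

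The one nontrivial point, and the only possible obstacle, is to show that $\sigma^\vee$ is \emph{injective}, so that its image really is $\OO_{\PP(\mathcal{E})}(-1)$ and not some proper quotient. I would verify this fibrewise over $J$: on a fiber $\PP(\mathcal{E} \otimes k(x))$ the section $\sigma$ restricts to the linear form cut out by the nonzero vector $1 \in \OO_J \otimes k(x) \hookrightarrow \mathcal{E} \otimes k(x)$, and such a linear form never vanishes identically on projective space. Thus $\sigma$ is nonzero on every fiber; flatness of $\pi$ then implies that $\sigma$ is not a zero divisor, so $\sigma^\vee$ is injective. This identifies the ideal sheaf of $q$ with $\OO_{\PP(\mathcal{E})}(-1)$ and, by restriction, the conormal line bundle of $q$ with $q^*\OO_{\PP(\mathcal{E})}(-1)$, which is the asserted description. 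The only genuine bookkeeping is to match the convention for $\PP(\cdot)$ with the one implicit in Lemma \ref{lem-epimorphism-of-sheaves} (quotients of $\mathcal{E}$ correspond to points of $\PP(\mathcal{E})$), which fixes the sign of the twist.
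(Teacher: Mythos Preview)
The paper does not prove this lemma; it is cited from \cite{S} as a known fact, so there is no proof in the paper to compare against. Your argument is the standard one and is correct in the only case the paper actually uses, namely when $\mathcal{F}$ (and hence $\mathcal{E}$) is locally free; see the proof of Lemma~\ref{lem-embedding-of-quot-is-cartier}, where this hypothesis is explicitly in force.

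There is, however, a small gap in your fibrewise verification of injectivity of $\sigma^\vee$ if one reads the lemma for arbitrary coherent $\mathcal{E},\mathcal{F}$. You assert that $1 \in \OO_J \otimes k(x) \hookrightarrow \mathcal{E} \otimes k(x)$ is nonzero, but tensoring the sequence $0 \to \OO_J \to \mathcal{E} \to \mathcal{F} \to 0$ with $k(x)$ need not preserve injectivity on the left unless $\mathcal{F}$ is flat at $x$. For instance, with $J = \Af^1 = \Spec k[t]$, $\mathcal{E} = \OO_J$, and the inclusion given by multiplication by $t$, the image of $1$ in $\mathcal{E}\otimes k(0)$ is zero, so $\sigma$ vanishes identically on the fibre over the origin. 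Likewise, your appeal to flatness of $\pi$ is only valid when $\mathcal{E}$ is locally free. When $\mathcal{F}$ is locally free the sequence is locally split, both issues disappear, and your argument goes through verbatim. If you want the statement in full generality, replace the fibrewise check by a generic-point argument: since $J$ is integral, $\OO_J \to \mathcal{E}$ stays injective at the generic point of $J$, so $\sigma$ is nonzero on the generic fibre of $\pi$, hence nonzero on every irreducible component of $\PP(\mathcal{E})$ dominating $J$; one then has to deal separately with possible vertical components or embedded primes of $\PP(\mathcal{E})$, which is why the locally free hypothesis is the clean setting.
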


We will also need the following fact from deformation theory.
\begin{lem}(\cite[Ex. 14.3]{H})\label{lem-local-deformations}
Let $k$ be an algebraically closed field, let $\mathcal{F}$ be a functor
\[
\mathcal{F}:\catname{Sch}_k\rightarrow \catname{Set}
\]
and let $X_0$ be an element of $\mathcal{F}(\Spec{k})$.
We can define a local functor
\[
\fname{F}:\catname{C} \rightarrow \catname{Set}
\]
where $\catname{C}$ is the category of finitely generated local Artin rings over $k$ with residue field $k$, by sending an element $A\in\catname{C}$ to the subset of $\mathcal{F}(\Spec{A})$ consisting of those elements $X\in\mathcal{F}(\Spec{A})$ that reduce to $X_0\in\mathcal{F}(\Spec{k})$ under the natural pull-back morphism. We call this the \emph{functor of local deformations}.

If $\mathcal{F}$ is representable by a scheme $M$ and a family $\mathcal{I}\in\mathcal{F}(M)$ such that $X_0$ corresponds to the point $x_0$ in $M$, then $\fname{F}$ is pro-representable by the complete local ring $\hat{\OO}_{M,x_0}$
\end{lem}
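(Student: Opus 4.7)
The plan is to prove the claim by unwinding Yoneda's lemma and using the fact that objects of $\catname{C}$ are ``fat points''. Write $\mathcal{F} = \Hom_k(-, M)$ (using the family $\mathcal{I}$ to fix the identification), so that $X_0 \in \mathcal{F}(\Spec k)$ corresponds to the inclusion of the closed point $x_0 \hookrightarrow M$. For any $A \in \catname{C}$, Yoneda identifies $\mathcal{F}(\Spec A)$ with $\Hom_k(\Spec A, M)$, and $\fname{F}(A) \subset \mathcal{F}(\Spec A)$ is then the set of morphisms $\Spec A \to M$ that send the unique closed point of $\Spec A$ to $x_0$.

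The key step is to observe that any such morphism factors canonically through $\Spec \hat{\OO}_{M,x_0}$. Indeed, since $\Spec A$ is set-theoretically a single point mapping to $x_0 \in M$, the morphism factors through any affine open neighborhood $\Spec R \subset M$ containing $x_0$, corresponding to a $k$-algebra map $R \to A$ sending the maximal ideal of $x_0$ into the maximal ideal $\mathfrak{m}_A$. This factors through the localization $\OO_{M,x_0}$, and since $A$ is Artinian some power $\mathfrak{m}_A^N$ vanishes, hence the map factors through $\OO_{M,x_0}/\mathfrak{m}_{x_0}^N$ and therefore, compatibly as $N$ varies, through the completion $\hat{\OO}_{M,x_0}$. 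Conversely, any continuous $k$-algebra homomorphism $\hat{\OO}_{M,x_0} \to A$ arises uniquely this way.

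Putting these two identifications together gives a natural bijection
\[
\fname{F}(A) \;\cong\; \Hom_{\text{cts}}(\hat{\OO}_{M,x_0}, A),
\]
which is exactly the statement that $\fname{F}$ is pro-represented by $\hat{\OO}_{M,x_0} = \varprojlim_N \OO_{M,x_0}/\mathfrak{m}_{x_0}^N$ (each quotient being a finitely generated local Artin $k$-algebra with residue field $k$, hence an object of $\catname{C}$). The only mildly subtle point is the factorization through the completion, but as noted above this is immediate once one uses that $A$ is finite-dimensional over $k$ and local; no genuine obstacle appears, and the rest is a formal Yoneda manipulation.
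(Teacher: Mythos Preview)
Your argument is correct and is essentially the standard proof of this fact. Note, however, that the paper does not actually supply a proof of this lemma: it is quoted as \cite[Ex.~14.3]{H} and immediately followed only by the remark that the relevant terminology is defined in \cite[cpt.~14]{H}. So there is no ``paper's own proof'' to compare against; you have written out the expected solution to the cited exercise. The one point worth making explicit, which you use implicitly, is that $x_0$ is a $k$-rational point (it arises from $X_0\in\mathcal{F}(\Spec k)$), so the residue field of $\hat{\OO}_{M,x_0}$ is $k$ and each truncation $\OO_{M,x_0}/\mathfrak{m}_{x_0}^N$ genuinely lies in $\catname{C}$.
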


All of the terms in this lemma are defined in $\cite[cpt. 14]{H}$.

We will now state and prove the three properties of our embeddings of $\fname{Quot}$-schemes that will be needed in our proof of theorem \ref{thm-RKF-integral-curves}.

\begin{lem}\label{lem-local-quot-structure}
For any integer $i$ and $1\leq j \leq s$, the morphism $\Quot^{i}_{(\omega/X/k)} \xrightarrow{q_j} \Quot^{i+1}_{(\omega/X/k)}$ induced by the corresponding map of functors is formally locally a section of a trivial projection in the sense of theorem \ref{thm-main-theorem}.
\end{lem}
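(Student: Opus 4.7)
The plan is to take $P = \Quot^{i}_{(\omega/X/k)} \times X^{\mathrm{sm}}$ with projection $\pi : P \to \Quot^i$, which is smooth because $X^{\mathrm{sm}}$ is smooth over $k$, and section $s : \mathcal{G} \mapsto (\mathcal{G}, x_j)$. For each closed point $y \in Y = \Quot^i$ with $z = q_j(y)$, the task is to produce a formal isomorphism $\varphi : \hat{Z}_z \xrightarrow{\cong} \hat{P}_{s(y)}$ satisfying $\varphi \circ \hat{q}_j = \hat{s}$.

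First I would reduce to a purely local problem at $x_j$. A torsion coherent quotient of $\omega$ on a curve decomposes canonically as a direct sum indexed by its finite support, and because $\Ext^{1}$ between torsion sheaves supported at disjoint points of a curve vanishes, the local deformation functor of such a quotient splits as a product of the local deformation functors of its summands. Hence by lemma \ref{lem-local-deformations}, the formal neighborhoods $\hat{Y}_y$, $\hat{Z}_z$, and $\hat{P}_{s(y)}$ all factor as completed tensor products of a piece ``at $x_j$'' and a piece ``away from $x_j$''. The maps $\hat{q}_j$ and $\hat{s}$ restrict to the identity on the ``away'' factor (because $\mathcal{I}_{x_j}$ is trivial away from $x_j$ and the section only modifies the $X$-coordinate), so the problem reduces to producing the formal iso on the pieces at $x_j$.

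At $x_j$ the smoothness of $X$ lets us trivialize $\omega$ formally and identify the local deformation functors with the formal completions of the Hilbert schemes $\Hilb^{a}(\hat{X}_{x_j})$ and $\Hilb^{a+1}(\hat{X}_{x_j})$, where $a$ is the length of the stalk $\mathcal{G}_{0,x_j}$ of the quotient corresponding to $y$. Since $\hat{X}_{x_j}$ is a smooth formal disk, these Hilbert schemes are smooth formal disks of dimensions $a$ and $a+1$, canonically coordinatized by the coefficients of the universal monic polynomial $g(t)$ (resp.\ $f(t)$) cutting out the universal length-$a$ (resp.\ length-$(a+1)$) subscheme. In these coordinates the map $\hat{q}_j$ corresponds to multiplication $g(t) \mapsto t \cdot g(t)$, which simply shifts the coefficients of $g$ up and inserts a $0$ in the lowest slot. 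The iso $\varphi$ is then a coordinate relabeling: writing $(e_1,\dots,e_a)$ for the coefficients of $g$, $(c_1,\dots,c_{a+1})$ for those of $f$, and $v$ for a uniformizer of $\hat{X}_{x_j}$ with $\hat{s}^{*}(v) = 0$, define $\varphi^{*}(c_i) = e_i$ for $i \le a$ and $\varphi^{*}(c_{a+1}) = v$.

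The main technical obstacle is the reduction in the second paragraph --- showing that deformations of a torsion quotient factor over the support, and that the local deformation functor at $x_j$ is pro-represented by the Hilbert scheme of the formal disk. Both use in an essential way that $x_j$ lies in the smooth locus of $X$, so that $\omega$ is locally free there and the local Hilbert scheme is smooth. Once these are in place, $\varphi$ is given by the relabeling above and the verification that $\hat{q}_j^{*} \circ \varphi^{*} = \hat{s}^{*}$ is an immediate check on generators.
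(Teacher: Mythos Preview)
Your plan is correct and shares the paper's core strategy: pass to local deformation functors via lemma \ref{lem-local-deformations}, then split the problem into an ``at $x_j$'' piece and an ``away from $x_j$'' piece using that $\mathcal{I}_{x_j}$ is trivial off $x_j$ and that $x_j$ lies in the smooth locus. The paper packages the split differently. Instead of decomposing the deformation functors a priori via $\Ext^1$-vanishing and then computing in explicit Hilbert-scheme coordinates on the formal disk, it works directly with the universal family $\mathcal{K}\hookrightarrow\omega$ over $\Spec\hat{\OO}_{\Q^{i+1},y}$, applies the theorem on formal functions to break the support of the universal quotient into the connected component through $x_j$ and its complement, and then defines the inverse of the tensor-product map $\fname{F}_i\times\fname{G}\to\fname{F}_{i+1}$ by restricting $\mathcal{K}$ to each piece. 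Your coordinate computation is more explicit and arguably more elementary; the paper's argument is a bit slicker in that it never chooses coordinates and handles all values of $a$ (including $a=0$) uniformly.

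One small slip: your $\varphi^{*}$ is written in the wrong direction. Since $\varphi:\hat{Z}\to\hat{P}$, the pullback $\varphi^{*}$ sends functions on $\hat{P}$ (the $e_i$ and $v$) to functions on $\hat{Z}$ (the $c_i$), so the formulas should read $\varphi^{*}(e_i)=c_i$ for $i\le a$ and $\varphi^{*}(v)=c_{a+1}$. With that correction the verification $\hat{q}_j^{*}\circ\varphi^{*}=\hat{s}^{*}$ is exactly the check you describe.
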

\begin{proof}
To facilitate the notation, we will denote $\Quot^{l}_{(\omega/X/k)}$ by $\Q^{l}$.

Let $x$ be a $k$-point of $\Q^{i}$ corresponding to a quotient with kernel $\mathcal{J}_0$, which is mapped by $q_j$ to $y$ which represents a quotient with kernel $\mathcal{J}_0\otimes\mathcal{I}_{x_j}$. 
Then we have to prove that there exists an isomorphism $\varphi$ making the following diagram commute
\[
\xymatrix @ R=0.5pc {
  &  \Spec{\hat{\OO}_{\Q^{i+1},y}} \ar[dd]^{\varphi}_{\cong} \\
\Spec{\hat{\OO}_{\Q^{i},x}} \ar[ru]^{\hat{q_j}} \ar[rd]^s & \\
  & \Spec{\hat{\OO}_{\Q^{i},x} \PS}
}
\]
where $s$ is the zero section.

By lemma \ref{lem-local-deformations}, $\Spec{\hat{\OO}_{\Q^{i},x}}$ pro-represents the local deformation functor $\fname{F}_i$ of $x$ at $\Q^{i}$.
Therefore, we have a universal rank-1 torsion-free sheaf $\mathcal{J}$ on $X\times\Spec{\hat{\OO}_{\Q^{i},x}}$ which restricts to $\mathcal{J}_0$ on the closed point.
Similarly, $\Spec{\hat{\OO}_{\Q^{i+1},x}}$ pro-represents the local deformation functor $\fname{F}_{i+1}$ of $y$ at $\Q^{i+1}$ and we have a universal rank-1 torsion-free sheaf $\mathcal{K}$ on $X\times\Spec{\hat{\OO}_{\Q^{i+1},x}}$ which restricts to $\mathcal{J}_0\otimes\mathcal{I}_{x_j}$ on the closed point.
Finally, since $x_j$ was chosen to be a smooth point of $X$, $\Spec{\hat{\OO}_{\Q^{d+i},x} \PS}$ pro-represents the functor $\fname{F}_i\times\fname{G}$ where $\fname{G}$ is the local deformation functor of $\mathcal{I}_{x_j}$ in $\Q^1$.

Therefore, the existence of $\varphi$ is equivalent to the existence of an isomorphism of local deformation functors $\psi$ making the following diagram commute.

\[
\xymatrix @ R=0.5pc {
  &  \fname{F}^{i+1} \ar[dd]^{\psi}_{\cong} \\
\fname{F}^i \ar[ru]^{\alpha} \ar[rd]^{\beta} & \\
  & \fname{F}^i\times \fname{G}
}
\]
where $\alpha$ and $\beta$ are defined compatibly with the previous diagram of scheme morphisms. 
For convenience, we'll define them explicitly. 
Let $A$ be an Artin ring in $\mathcal{C}$ and $\mathcal{I}$ a sheaf in $X\times\Spec{A}$ corresponding to an element on $\fname{F}_i(A)$.
Then
\[
\xymatrix @ R=0.5pc{
\alpha(\mathcal{I}) = & \mathcal{I}\otimes\mathcal{I}_{x_j}\in\fname{F}_{i+1}(A) \\
\beta(\mathcal{I}) =  & (\mathcal{I},\OO_A\otimes\mathcal{I}_{x_j})\in\fname{F}_i(A)\times\fname{G}(A)
}
\]

There is a natural morphism
\[
\fname{F}_i\times\fname{G}\rightarrow\fname{F}_{i+1}
\]
sending a pair of elements in $\fname{F}_i(A)\times\fname{G}(A)$ to their tensor product.
Therefore, the proof will be complete if we can construct an inverse $\psi$ of this map.

Since $\mathcal{K}$ is universal, constructing $\psi$ is equivalent to finding a deformation
\[
0\rightarrow\mathcal{L}\rightarrow\omega
\]
of $\mathcal{I}_0$ over $\Spec{\hat{\OO}_{\Q^{i+1},y}}$ and a deformation 
\[
0\rightarrow\mathcal{M}_j\rightarrow\omega
\]
of $\mathcal{I}_{x_j}$ such that $\mathcal{L}\otimes\mathcal{M}_j\cong\mathcal{K}$. 
Because in that case, we could define $\psi$ by sending $\mathcal{K}$ to the pair $(\mathcal{L},\,\mathcal{M}_j)$.

Now, we know that the projection
\[
Y = X\times\Spec{\hat{\OO}_{\Q^{i+1},y}} \rightarrow \Spec{\hat{\OO}_{\Q^{i+1},y}}
\]
is proper. Furthermore, the support of the cokernel $\mathcal{G}$ of $\mathcal{K}\rightarrow\omega$ is proper and flat over $\Spec{\hat{\OO}_{\Q^{d+i+1},y}}$ and its restriction to the closed point consists of $i+1$ points, one of which is $x_j$.
Therefore, by the theorem of formal functions applied to the global sections of this support, the support of $\mathcal{G}$ itself consists of $i+1$ connected components. Let $C_j$ denote the component containing $x_j$ and let $C$ denote the union of the other components.

We now define $\mathcal{L}$ to be the sheaf which is $\mathcal{K}$ on $Y\setminus C_j$ and trivial on $Y\setminus C$. 
Similarly, we define $\mathcal{M}_j$ to be the sheaf which is $\mathcal{K}$ on $Y\setminus C$ and trivial on $Y\setminus C_j$.
\end{proof}

\begin{lem}\label{lem-induced-embedding-of-fibers}
For each integer $i$ and $1\leq j \leq s$, let $\Quot^{i}_{(\omega/X/k)} \xrightarrow{q_j} \Quot^{i+1}_{(\omega/X/k)}$  be the morphism defined above.
Let $p_i=[\mathcal{I}]$ be a $k$-point of $P_{p-i-1}$ and let $p_{i+1}=[\mathcal{I}\otimes\mathcal{I}_{x_j}]$ denote the corresponding element of $P_{p-i-2}$.
Let
\[
\PP^{r_i}_k \cong (\mathcal{A}^{d+i}_{\omega})^{-1}(p_i) \xrightarrow{u} (\mathcal{A}^{d+i+1}_{\omega})^{-1}(p_{i+1}) \cong \PP^{r_{i+1}}_k
\]
denote the map on fibers induced by $q_j$ where $r_i$ and $r_{i+1}$ are defined to be the dimensions of the fibers.
Then, $u$ is a degree one embedding of projective spaces.
\end{lem}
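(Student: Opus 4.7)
The plan is to identify both fibers with linear systems via Lemma \ref{lem-AJ-fibers}, describe $u$ explicitly as the map induced by precomposition with the canonical inclusion $\mathcal{I}\otimes\mathcal{I}_{x_j}\hookrightarrow\mathcal{I}$, and then verify that the resulting map on projective spaces is linear.

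First, I would invoke Lemma \ref{lem-AJ-fibers}: the fiber $(\mathcal{A}^{d+i}_\omega)^{-1}(p_i)$ is isomorphic to the projectivization of $\Hom_X(\mathcal{I},\omega)$ (via the correspondence sending an injection $\phi\colon\mathcal{I}\hookrightarrow\omega$ to its cokernel, which is a degree-$(d+i)$ quotient of $\omega$ with pseudo-ideal isomorphic to $\mathcal{I}$), and similarly for $p_{i+1}$ using $\mathcal{I}\otimes\mathcal{I}_{x_j}$ in place of $\mathcal{I}$. Because $x_j$ was chosen to lie in the smooth locus of $X$, tensoring with the line bundle $\mathcal{I}_{x_j}$ on that locus gives a canonical short exact sequence
\[
0\to\mathcal{I}\otimes\mathcal{I}_{x_j}\to\mathcal{I}\to\mathcal{I}/\mathcal{I}\otimes\mathcal{I}_{x_j}\to 0,
\]
whose rightmost term is a skyscraper sheaf supported at $x_j$. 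Under the identifications above, the map $q_j$ sends a quotient with pseudo-ideal inclusion $\phi\colon\mathcal{I}\hookrightarrow\omega$ to the quotient with pseudo-ideal inclusion $\phi\circ\iota\colon\mathcal{I}\otimes\mathcal{I}_{x_j}\hookrightarrow\omega$, where $\iota$ is the inclusion above. In particular, $u$ is the projectivization of the $k$-linear map
\[
\iota^{*}\colon\Hom_X(\mathcal{I},\omega)\longrightarrow\Hom_X(\mathcal{I}\otimes\mathcal{I}_{x_j},\omega),\qquad \phi\longmapsto \phi\circ\iota.
\]

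Next, I would check that $\iota^{*}$ is injective. If $\phi\circ\iota=0$, then $\phi$ factors through the skyscraper cokernel of $\iota$; but $\omega$ is a rank-one torsion-free sheaf on the integral curve $X$, so it admits no nonzero maps from a skyscraper, forcing $\phi=0$. Therefore $u$ is the projectivization of an injective linear map between finite-dimensional vector spaces, which is precisely a linear embedding $\PP^{r_i}_k\hookrightarrow\PP^{r_{i+1}}_k$; such an embedding pulls back $\OO(1)$ to $\OO(1)$, so it has degree one.

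The only potentially delicate step is verifying the explicit description of $u$ on fibers, since it hinges on the identification in Lemma \ref{lem-AJ-fibers} being functorial in the pseudo-ideal; this should follow by tracing through the naturality of the bijection between degree-$n$ quotients of $\omega$ (with prescribed pseudo-ideal class) and injections into $\omega$, which is exactly the content of the representability result used in that lemma. Everything else is then immediate linear algebra.
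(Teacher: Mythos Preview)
Your argument is correct and is essentially the paper's proof, recast in dual language. The paper packages the identification of $u$ as a separate general lemma (Lemma~\ref{lem-embedding-linear-systems}), which shows for any base $T$ that the induced map of linear systems comes from a \emph{surjection} $H(\mathcal{I}\otimes\mathcal{I}_{x_j},\omega)\twoheadrightarrow H(\mathcal{I},\omega)$, and then invokes the standard fact (Lemma~\ref{lem-epimorphism-of-sheaves}) that an epimorphism of sheaves induces a closed immersion of projectivizations pulling back $\OO(1)$ to $\OO(1)$. Over a $k$-point, $H(\mathcal{I},\omega)$ is by definition the $k$-linear dual of $\Hom_X(\mathcal{I},\omega)$, so the paper's surjection of $H$'s is exactly the dual of your injection $\iota^*$ of $\Hom$'s, and your torsion-freeness argument for injectivity is the pointwise case of the paper's surjectivity check. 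The ``delicate step'' you flag---naturality of the identification in Lemma~\ref{lem-AJ-fibers}---is precisely what the paper isolates and proves once in Lemma~\ref{lem-embedding-linear-systems}; doing it by hand for $T=\Spec k$ as you propose is fine and amounts to the same computation.
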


We also need the following similar looking lemma. 
Let $\mathcal{J}_i$ denote the tautological rank-1 torsion-free sheaf on $X\times P_{p-i-1}$.
Recall that by theorem \ref{thm-free-HIJ} and theorem \ref{thm-AJ-general-properties}, for each $i$, $\Quot^{i}_{(\omega/X/k)}$ is a projective scheme over $P_{p-i-1}$ of the form $\PP(H(\mathcal{J}_{i},\omega))$ and the Abel Jacobi map is the structure map. Furthermore, by our choice of $s$, $H(\mathcal{I}_{d+s-1},\omega)$ and $H(\mathcal{I}_{d+s},\omega)$ are locally free.

\begin{lem}\label{lem-embedding-of-quot-is-cartier}
For each $1\leq j \leq s$, the map $\Quot^{d+s-1}_{(\omega/X/k)} \xrightarrow{q_j} \Quot^{d+s}_{(\omega/X/k)}$ embeds the projective bundle $\Quot^{d+s-1}_{(\omega/X/k)}\cong\PP(H(\mathcal{J}_{d+s-1},\omega))$ as a degree one Cartier divisor in the projective bundle $\Quot^{d+s}_{(\omega/X/k)}\cong\PP(H(\mathcal{J}_{d+s},\omega))$ and is cut out by a section of $\OO_{\PP(H(\mathcal{I}_{d+s},\omega))}(1)$.
\end{lem}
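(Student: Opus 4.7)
The plan is to realize $q_j$ as the projective-bundle embedding induced by a short exact sequence of locally free $H$-sheaves on the base. After identifying $P_{p-d-s}$ with $P_{p-1-d-s}$ through the line-bundle-twist isomorphism $\nu_{\mathcal{I}_{x_j}}$, both $\Quot^{d+s-1}$ and $\Quot^{d+s}$ become projective bundles over a common base $P$, and the pulled-back tautological sheaf $\mathcal{J}_{d+s}$ becomes $\mathcal{J}_{d+s-1}\otimes \mathcal{I}_{x_j}$ on $X\times P$. Since $H(-,\omega)$ commutes with the flat base change $\nu_{\mathcal{I}_{x_j}}$, both $H$-sheaves in the statement can be viewed as living on $P$.

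The key ingredient is the short exact sequence on $X\times P$ obtained by tensoring $0 \to \mathcal{I}_{x_j} \to \mathcal{O}_X \to k(x_j) \to 0$ with the torsion-free sheaf $\mathcal{J}_{d+s-1}$. Since $x_j$ is a smooth point of $X$, the resulting sequence
\[
0 \to \mathcal{J}_{d+s-1}\otimes \mathcal{I}_{x_j} \to \mathcal{J}_{d+s-1} \to L \to 0
\]
is exact, with $L$ the push-forward of a line bundle from $\{x_j\}\times P$. I would apply $H(-,\omega)$ and analyze the resulting long exact sequence: the first term $H(L,\omega)$ vanishes because $L$ is a torsion sheaf supported at a smooth point while $\omega$ is locally free there, and the term $\Ext^1(\mathcal{J}_{d+s-1},\omega)$ vanishes on $P$ by Serre duality combined with the choice $d+s \geq 2p$, which forces the pseudo-ideal $\mathcal{J}_{d+s-1}$ to have negative degree on every fibre and hence no global sections. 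A local Koszul computation at $x_j$ identifies $\Ext^1(L,\omega)$ with a line bundle $M$ on $P$ (fibrewise equal to $\omega|_{x_j}$ twisted by the fibre of $L$). The long exact sequence therefore collapses to a short exact sequence of locally free sheaves on $P$,
\[
0 \to H(\mathcal{J}_{d+s-1},\omega) \to H(\mathcal{J}_{d+s},\omega) \to M \to 0.
\]

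To finish, I would tensor this sequence by $M^{-1}$ and dualize, producing an epimorphism of locally free sheaves with kernel $\mathcal{O}_P$, and then invoke lemma \ref{lem-epimorphism-of-sheaves-good-kernel} to obtain the closed immersion of projectivizations as a Cartier divisor in the linear system of the relative $\mathcal{O}(1)$. The identification of this immersion with $q_j$ itself is a functor-of-points check: the inclusion of pseudo-ideals $\mathcal{J}_{d+s-1}\otimes \mathcal{I}_{x_j} \hookrightarrow \mathcal{J}_{d+s-1}$ dualizes to the restriction map on $\Hom(-,\omega)$, which is exactly how $q_j$ sends a section $\mathcal{J}_{d+s-1}\hookrightarrow\omega$ to its restriction to $\mathcal{J}_{d+s-1}\otimes\mathcal{I}_{x_j}$. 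The main obstacle I anticipate is pinning down the precise $\mathcal{O}(1)$-twist and the projectivization convention so that Sernesi's lemma delivers the statement in its stated form, and checking that the $\Ext^1$-vanishings are uniform on $P$ so that the cokernel $M$ is genuinely a line bundle rather than merely a coherent sheaf with $1$-dimensional fibres.
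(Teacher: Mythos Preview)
Your cohomological route is sound in spirit, but there is a variance error in how you invoke $H(-,\omega)$. In the Altman--Kleiman setup (lemma~\ref{lem-HIF} and the remark following it), $H$ is \emph{covariant} in its first argument: the inclusion $\mathcal{J}_{d+s-1}\otimes\mathcal{I}_{x_j}\hookrightarrow\mathcal{J}_{d+s-1}$ induces a map $H(\mathcal{J}_{d+s-1}\otimes\mathcal{I}_{x_j},\omega)\to H(\mathcal{J}_{d+s-1},\omega)$, not the other way, and there is no long exact sequence for $H$ of the shape you write. What your vanishing arguments actually compute is the long exact sequence for $f_*\mathcal{Hom}(-,\omega)$ and its higher direct images; under $d+s\geq 2p$ these pushforwards are locally free and are \emph{dual} to the corresponding $H$'s. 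Running your argument with $f_*\mathcal{Hom}$ and dualizing once already gives
\[
0\ \longrightarrow\ M^\vee\ \longrightarrow\ H(\mathcal{J}_{d+s},\omega)\ \longrightarrow\ H(\mathcal{J}_{d+s-1},\omega)\ \longrightarrow\ 0,
\]
which is exactly the surjection with line-bundle kernel that lemma~\ref{lem-epimorphism-of-sheaves-good-kernel} wants. Your extra dualization at the end instead lands you in $\PP(H^\vee)$, which is not the Quot scheme.

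The paper's route is different and shorter. It packages the surjectivity into the general lemma~\ref{lem-embedding-linear-systems}, proved by a direct Yoneda argument (checking that two maps $\mathcal{I}\rightrightarrows\omega\otimes\mathcal{G}$ agreeing after restriction to $\mathcal{I}\otimes\mathcal{I}_{x_j}$ must coincide, by working on the cover $(X\setminus x_j)\cup X^{\mathrm{sm}}$). It then does \emph{not} compute the kernel cohomologically: instead it cites lemma~\ref{lem-local-quot-structure}, the formal-local description of $q_j$ as a zero section into $\Spec{\hat{\OO}\PS}$, to conclude that the embedding has codimension one, whence the kernel of a surjection of locally free sheaves is forced to be a line bundle. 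Your approach, once the direction is fixed, has the merit of identifying that kernel explicitly as $\Ext^1(L,\omega)^\vee$ and of being independent of lemma~\ref{lem-local-quot-structure}; the paper's approach avoids relative-$\Ext$ machinery entirely and reuses a lemma already needed for the main cancellation theorem.
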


In interest of economy and clarity, we will prove both of these lemmas as special cases of the following general statement about linear systems.

\begin{lem}\label{lem-embedding-linear-systems}
Let $l$ be a positive integer and $1\leq j \leq s$. Let $T$ be a $P_{p-i-1}$ scheme and denote the induced rank-1 torsion-free sheaf on $X\times_k T$ by $\mathcal{I}$. Consider the map $\fname{Quot}^{l}_{(\omega/X/S)} \xrightarrow{q_j} \fname{Quot}^{l+1}_{(\omega/X/S)}$ and the induced map on the linear systems as shown in the following diagram of Cartesian squares:
\[
\xymatrix{
\PP(H(\mathcal{I}, \omega)) \ar[r]^{\cong} \ar[d]  & \fname{LinSyst}_{(\mathcal{I},\omega)}  \ar[r] \ar[d] & \fname{Quot}^{i}_{(\omega/X/S)} \ar[d]^{q_j} & \\
\PP(H(\mathcal{I}\otimes\mathcal{I}_{x_j}, \omega)) \ar[r]^{\cong}  & \fname{LinSyst}_{(\mathcal{I}\otimes\mathcal{I}_{x_j},\omega)}  \ar[r] \ar[d] & \fname{Quot}^{i+1}_{(\omega/X/S)} \ar[d] \ar[r]^{\cong} & \fname{Quot}^{i+1}_{(\omega/X/S)} \ar[d]^{\mathcal{A}^{i+1}_{\omega}}\\
            &  T \ar[r]^{\mu_{\mathcal{I}}}  & P_{p-i-1} \ar[r]^{\nu_{\mathcal{I}_{x_j}}}_{\cong} & P_{p-i-2}
}
\]
Then, the map on linear systems is induced by the canonical map
\[
H(\mathcal{I}\otimes\mathcal{I}_{x_j}, \omega) \rightarrow H(\mathcal{I}, \omega)
\]
coming from the natural map $\mathcal{I}\otimes\mathcal{I}_{x_j}\rightarrow\mathcal{I}$, and this map is a surjection.
\end{lem}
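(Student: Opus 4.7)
The plan is first to identify the map on linear systems as arising from a map of $H$-sheaves (so that Lemma \ref{lem-epimorphism-of-sheaves} applies), and then to prove surjectivity of this map via a direct local computation.

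Under the identification $\fname{LinSyst}_{(\mathcal{I},\omega)}\cong\PP(H(\mathcal{I},\omega))$ of Theorem \ref{thm-linsyst-representable}, a $T$-point of $\PP(H(\mathcal{I},\omega))$ is a line bundle quotient $H(\mathcal{I},\omega)\twoheadrightarrow\mathcal{N}^{-1}$; by Lemma \ref{lem-HIF} this is the same datum as a map $\mathcal{I}\hookrightarrow\omega_T\otimes\mathcal{N}^{-1}$, producing the pseudo-ideal $\mathcal{I}\otimes\mathcal{N}\hookrightarrow\omega_T$. The morphism $q_j$ tensors this pseudo-ideal with $\mathcal{I}_{x_j}$, equivalently precomposes with $\mathcal{I}\otimes\mathcal{I}_{x_j}\hookrightarrow\mathcal{I}$. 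By functoriality of $H$ in its first argument, this translates to post-composing the quotient $H(\mathcal{I},\omega)\twoheadrightarrow\mathcal{N}^{-1}$ with the canonical map
\[
\phi: H(\mathcal{I}\otimes\mathcal{I}_{x_j},\omega)\to H(\mathcal{I},\omega)
\]
appearing in the lemma statement. Once $\phi$ is shown to be surjective, Lemma \ref{lem-epimorphism-of-sheaves} identifies the induced morphism $\PP(H(\mathcal{I},\omega))\hookrightarrow\PP(H(\mathcal{I}\otimes\mathcal{I}_{x_j},\omega))$ as the closed embedding determined by $\phi$.

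To prove surjectivity of $\phi$, note that since $x_j$ is a smooth $k$-point of the integral curve $X$, the pullback $\mathcal{I}_{x_j}$ to $X_T$ is the invertible ideal of the Cartier divisor $\{x_j\}\times T$, and the $T$-flat, relatively rank-$1$ torsion-free sheaf $\mathcal{I}$ is a line bundle in a Zariski neighbourhood of $\{x_j\}\times T$. Hence there is a short exact sequence
\[
0\to\mathcal{I}\otimes\mathcal{I}_{x_j}\to\mathcal{I}\to\mathcal{Q}\to 0
\]
with $\mathcal{Q}$ a line bundle on $\{x_j\}\times T$. By left exactness of $\Hom$ together with the universal property of $H$, surjectivity of $\phi$ is equivalent to the vanishing $\Hom_{X_T}(\mathcal{Q},\omega_T\otimes\mathcal{M})=0$ for every quasi-coherent $\OO_T$-module $\mathcal{M}$. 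This is local: picking a Zariski open $U\subset X$ containing $x_j$ and a local parameter $u$ cutting out $x_j$ in $U$ trivializes $\omega_U$ and identifies $\mathcal{Q}|_{U_T}$ with $\OO_{U_T}/(u)$, so the Hom becomes the $u$-torsion of the pullback of $\mathcal{M}$; this vanishes because $u$ acts on the $\OO_T$-flat extension $\OO_{U_T}$ as a formal variable and therefore remains a non-zero-divisor after any tensor with $\mathcal{M}$.

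The main obstacle is the bookkeeping required to track the several universal properties involved (of $H$, of $\PP(H)$, and of $\fname{LinSyst}$) and to check that the direction of the canonical map $\phi$ agrees with that of the natural inclusion $\mathcal{I}\otimes\mathcal{I}_{x_j}\hookrightarrow\mathcal{I}$; once these directions are pinned down, both the identification of the map on linear systems and the verification of surjectivity reduce to short formal arguments, the only geometric input being smoothness of $x_j\in X$.
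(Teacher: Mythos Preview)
Your proposal is correct and follows essentially the same approach as the paper. Both arguments unravel the universal properties of $H$ and $\PP(H)$ for the first claim, and for surjectivity both translate via Yoneda to the injectivity of $\Hom_X(\mathcal{I},\omega\otimes\mathcal{M})\to\Hom_X(\mathcal{I}\otimes\mathcal{I}_{x_j},\omega\otimes\mathcal{M})$ and verify it locally near $x_j$; the only cosmetic difference is that you phrase this via the cokernel $\mathcal{Q}$ and the vanishing of $\Hom(\mathcal{Q},\omega\otimes\mathcal{M})$, whereas the paper checks the epimorphism criterion $\alpha\circ f=\beta\circ f\Rightarrow\alpha=\beta$ directly on the cover $(X\setminus x_j)\cup X^{\mathrm{sm}}$.
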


\begin{proof}
The first statement follows from unraveling the identification of linear systems with projective space in lemma \ref{lem-AJ-fibers}. Since this is done in detail in the proof of this lemma in $\cite[5.17]{AIK}$, we won't reproduce it here.
For the second statement, let $\mathcal{G}$ be an $\OO_T$-module and let
\[
\xymatrix{
H(\mathcal{I}\otimes\mathcal{I}_{x_j}, \omega) \ar[r]^f & H(\mathcal{I}, \omega) \ar@<-.5ex>[r]_{\alpha} \ar@<.5ex>[r]^{\beta} & \mathcal{G}
}
\]
be maps such that $\alpha\circ f = \beta \circ f$. We will show that this implies $\alpha=\beta$.

By lemma \ref{lem-HIF}, we need to show that for every diagram
\[
\xymatrix{
\mathcal{I}\otimes\mathcal{I}_{x_j} \ar[r]^f & \mathcal{I} \ar@<-.5ex>[r]_{\alpha} \ar@<.5ex>[r]^{\beta} & \omega\otimes_k \mathcal{G}
}
\]
such that $\alpha\circ f = \beta\circ f$, we have $\alpha=\beta$. 
We will check this assertion on each piece of the open cover
\[
X\times_kY = ((X\setminus{x_j})\times_kY)\coprod (X^{\textrm{sm}}\times_kY)
\]
On the first piece the assertion is trivial since $\mathcal{I}_{x_j}$ is trivial. On the second piece the claim reduces to the smooth case. 
In that case, $\omega$ becomes a line bundle and it is easy to verify the claim stalk wise using the fact that $X$ is integral.
\end{proof}

We can now quickly deduce lemmas \ref{lem-induced-embedding-of-fibers} and \ref{lem-embedding-of-quot-is-cartier}.

\begin{proof}[Proof of lemma \ref{lem-induced-embedding-of-fibers}:]
We apply lemma \ref{lem-embedding-linear-systems} in the case where $T$ is the $k$ point $p_i$. The lemma now follows from lemma \ref{lem-epimorphism-of-sheaves}.
\end{proof}

\begin{proof}[Proof of lemma \ref{lem-embedding-of-quot-is-cartier}:]
We apply lemma \ref{lem-embedding-linear-systems} in the case where $T$ is $\Quot^{d+s-1}_{(\omega/X/S)}$ with the universal rank-1 torsion-free sheaf $\mathcal{I}_{d+s-1}$. By the choice of $s$, both $H(\mathcal{I}_{d+s-1}, \omega)$ and $H(\mathcal{I}_{d+s}, \omega)$ are locally free. Furthermore, by lemma \ref{lem-local-quot-structure}, the embedding is of codimension one. The lemma now follows from lemma \ref{lem-epimorphism-of-sheaves-good-kernel}.
\end{proof}

We will also need the following technical result about compositions of embeddings into a smooth scheme.
\begin{lem}\label{lem-composition-into-smooth}
Let $S$ be a $k$-scheme and let $X\xrightarrow{i} Y \rightarrow Z$ be closed embeddings of finite type $S$-schemes. Suppose $Z$ is a smooth $S$-scheme, $Y$ is a smooth $k$-scheme and $i$ is a regular embedding with normal bundle $N$. Then
\[
i^*s(Y,Z) = c(N)\cap s(X,Z)
\]
\end{lem}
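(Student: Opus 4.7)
The plan is to combine Proposition~\ref{prop-smooth-case} with the standard formula for the Segre class of a regular embedding, expressing both sides of the desired identity as $c(N_Y Z|_X)^{-1}\cap[X]$.

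First, I would observe that since $Y$ and $Z$ are both smooth (over $k$, after noting that for the intended application $S$ poses no obstruction), the closed embedding $Y\to Z$ is automatically a regular embedding. Denote its normal bundle by $M = N_YZ$, so that by \cite[4.1.8]{Fulton},
\[
s(Y,Z) \;=\; c(M)^{-1}\cap[Y].
\]
Because $i$ is a regular embedding, the refined Gysin pullback $i^{*}$ commutes with cap products by Chern classes of pulled-back bundles. Applying it to the formula above gives
\[
i^{*}s(Y,Z) \;=\; c(M|_X)^{-1}\cap i^{*}[Y].
\]

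Next I would verify that $i^{*}[Y]=[X]$. This follows directly from Fulton's definition of $i^{!}$ on a subvariety $V\subset Y$: taking $V=Y$,
\[
i^{!}[Y] \;=\; \bigl\{c(N)\cap s(X,Y)\bigr\}_{\dim Y - d},
\]
where $d$ is the codimension of $i$. Since $i$ is regular with normal bundle $N$, one has $s(X,Y)=c(N)^{-1}\cap[X]$, so the expression in braces is $[X]$, which already lies in the correct dimension. Hence $i^{*}s(Y,Z) = c(M|_X)^{-1}\cap[X]$.

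Finally, I would apply Proposition~\ref{prop-smooth-case} to the composition $X\to Y\to Z$ (whose hypotheses are now satisfied) to obtain $s(X,Y) = c(M|_X)\cap s(X,Z)$. Combining this with $s(X,Y) = c(N)^{-1}\cap[X]$ yields
\[
c(N)\cap s(X,Z) \;=\; c(M|_X)^{-1}\cap[X],
\]
which matches the expression for $i^{*}s(Y,Z)$ from the previous step, completing the proof. There is no real obstacle here; the argument is a direct chain of Segre class identities, and the only mildly delicate point is the verification of $i^{*}[Y]=[X]$, which is a standard compatibility of the refined Gysin map with fundamental classes of regularly embedded subschemes.
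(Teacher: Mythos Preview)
Your argument is correct when $Z$ is smooth over $k$, but the lemma only assumes $Z$ is smooth over $S$, and that distinction is the entire point. Your parenthetical dismissal (``for the intended application $S$ poses no obstruction'') is wrong: in the proof of Theorem~\ref{thm-RKF-integral-curves} the lemma is applied with $S$ equal to the arbitrary equi-dimensional subscheme of $P_{p-1-d}$ and with the ambient scheme equal to $\Quot^{d+s}_{(\omega/X/k)Z}$, which is smooth over $S$ (the Abel--Jacobi map being smooth for $d+s$ large) but typically \emph{singular} over $k$ --- indeed, the whole purpose of the corollaries is to extract the multiplicity $\mult_x P_0$ at singular points. In that situation $Y\to Z$ need not be a regular embedding at all, so your opening step $s(Y,Z)=c(M)^{-1}\cap[Y]$ is unavailable, and your later invocation of Proposition~\ref{prop-smooth-case} (which likewise requires $Z$ smooth over $k$) fails for the same reason.

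The paper's proof handles this by working relatively over $S$: it introduces the auxiliary scheme $Z'=S\times_k Y$, which is smooth over $S$ because $Y$ is smooth over $k$, checks the identity there directly (where $Y\to Z'$ is the inclusion of a fiber), and then transfers the result to an arbitrary smooth $S$-scheme $Z$ via the invariance of the relative Fulton-type class $c(T_{Z/S})\cap s(-,Z)$, following \cite[4.2.6]{Fulton}. To salvage your approach you would need a genuinely relative version of Proposition~\ref{prop-smooth-case}, which is essentially what the paper's argument supplies.
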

\begin{proof}
Let $Z'=S\times_k Y$. The claim is trivial to verify when $Z=Z'$ and $Y\rightarrow Z'$ is the embedding of one of the fibers.
As we did earlier in the proof of proposition \ref{prop-smooth-case}, we will use the properties of Fulton's class from \cite[4.2.6]{Fulton} to compare general case to the case where $Z=Z'$. Specifically, we have
\[
\xymatrix @ R=0.5pc{
c(T_{Z/S})\cap s(X,Z) = c(T_{Z'/S})\cap s(X,Z') \\
c(T_{Z/S})\cap s(Y,Z) = c(T_{Z'/S})\cap s(Y,Z')
}
\]
from which we can deduce that
\[
\xymatrix @ R=0.5pc{
c(N) \cap s(X,Z) = s(T_{Z/S}) \cap c(T_{Z'/S})\cap c(N) \cap s(X,Z') \\
i^*s(Y,Z) = s(T_{Z'/S}) \cap c(T_{Z'/S})\cap i^*s(Y,Z')
}
\]

Since we know that the two terms on the right are equal, the lemma follows.
\end{proof}

\end{document}